\renewcommand*{\backref}[1]{}
\renewcommand*{\backrefalt}[4]{%
    \ifcase #1 (Not cited.)%
    \or        (Cited on page~#2.)%
    \else      (Cited on pages~#2.)%
    \fi}
\newtheorem{thm}{Theorem}
\newtheorem{lem}[thm]{Lemma}
\newtheorem{cor}[thm]{Corollary}
\newtheorem{observation}[thm]{Remark}
\newtheorem{prop}[thm]{Proposition}
\newtheorem{defn}[thm]{Definition}
\numberwithin{thm}{section}
\numberwithin{equation}{section}
\DeclareMathOperator\bR{\mathbb{R}}
\DeclareMathOperator\bC{\mathbb{C}}
\DeclareMathOperator\bN{\mathbb{N}}
\newcommand{\cA}{\mathcal{A}}
\newcommand{\cE}{\mathcal{E}}
\newcommand{\cF}{\mathcal{F}}
\renewcommand{\d}{\partial}
\newcommand{\ddbar}{\d\overline{\d}}
\newcommand{\vepsilon}{\varepsilon}
\newcommand{\vphi}{\varphi}
\newcommand{\ii}{\sqrt{-1}}
\title[Weak solutions on compact Hermitian manifolds]{Weak solutions of complex Hessian equations on compact Hermitian manifolds}
\author[S. Ko\l odziej and N.-C. Nguyen]{S\l awomir Ko\l odziej and Ngoc Cuong Nguyen} 
\subjclass[2010]{53C55, 35J96, 32U40}
\keywords{weak solutions, complex Hessian equation, comparison principle}
\begin{document}

\begin{abstract}
 We prove the existence of weak solutions of complex $m-$Hessian equations on compact Hermitian manifolds
for the nonnegative  right hand side belonging to $L^p, p>n/m$ ($n$ is the dimension of the manifold). For smooth, positive data the equation has been recently
solved by Sz\'ekelyhidi and Zhang. We also give a stability result for such solutions.
\end{abstract}

\maketitle

\section{Introduction}

S.-T. Yau \cite{Y} confirmed the Calabi  Conjecture solving the complex  Monge-Amp\`ere on compact K\"ahler manifolds.
This fundamental result has been extended in several  directions. One can consider weak solutions for possibly degenerate non-smooth right hand side (see \cite{kolodziej98}).
Then,  one can generalize the equation, and here the Hessian equations are
a natural choice. The solutions were obtained by Dinew and the first author \cite{dinew-kolodziej12, dinew-kolodziej14}. One can also drop the K\"ahler condition and consider just Hermitian manifolds. 
The Monge-Amp\`ere on compact Hermitian manifolds was solved by Tosatti and  Weinkove \cite{TW10b}  for smooth nondegenerate data
and by the authors \cite{KN1} for the nonnegative  right hand side in $L^p$, $p>1$.
Very recently  Sz\'ekelyhidi \cite{szekelyhidi15} and Zhang \cite{dzhang15} showed the counterpart of Calabi-Yau theorem for Hessian equations on  compact Hermitian manifolds.

As in the real case geometrically meaningful Hessian equations appear in some ''twisted'' nonstandard form. Thus, for the K\"ahler manifolds the Fu-Yau equation
\cite{FY08} related to a Strominger system for dimension higher than two becomes the Hessian (two) equation with an extra linear term involving the gradient
of the solution. It has been recently studied by Phong-Picard-Zhang \cite{PPZ}. Another form  of the Hessian equation is shown to be equivalent to
quaternionic Monge-Amp\`ere equation on  HKT-manifolds in the paper of Alesker and Verbitsky \cite{AV}. Some related equations are solved by
Sz\'ekelyhidi-Tosatti-Weinkove in their work on the Gauduchon conjecture \cite{SzTW}.

The main result of this paper extends the  Sz\'ekelyhidi-Zhang \cite{szekelyhidi15, dzhang15} theorem as follows.

\noindent {\bf Theorem.} {\em Let $(X,\omega)$ be a compact $n$-dimensional Hermitian manifold and an integer number $1\leq m < n$. Let  $0 \leq f \in L^p(X, \omega^n), p>n/m$, and $\int_X f \omega^n >0$. There exist a continuous  $(\omega, m)$-subharmonic function $u$ and a constant $c>0$ satisfying
\[
	(\omega+ dd^c u)^m\wedge \omega^{n-m} = c f \omega^n.
\]}
We also obtain a stability theorem (Prop.~\ref{stability2}), which for the Monge-Amp\`ere equation was proven in \cite{KN2}. To obtain those
results we need to adapt the methods of pluripotential theory to Hessian equations and Hermitian setting. One of the key points, which required a different proof  was
the counterpart of Chern-Levine-Nirenberg inequality. Another stumbling block is the lack of a natural method of monotone approximation of an $(\omega, m)$-subharmonic function by smooth functions from this class. For plurisubharmonic functions, that is the case $m=n$,  this is possible (see e.g. \cite{blocki-kolodziej07, DP04}).
On K\"ahler manifolds Lu and Nguyen \cite{chinh-dong} employed the method of Berman \cite{berman} and  Eyssidieux-Guedj-Zeriahi \cite{egz13}
to construct smooth approximants of an $(\omega, m)$-subharmonic function. However this method requires the existence theorem for Hessian type equation, so it is far more complicated than the ones starting from convolutions with a smoothing kernel.
In the last section we carry out a similar construction to the one in \cite{chinh-dong} on Hermitian manifolds.

\bigskip

{\bf Acknowledgement.}   The research was partially supported by NCN grant \linebreak 2013/08/A/ST1/00312. A part of this work was done
while the first author visited E. Schr\"odinger Institute. He would like to thank the institution for hospitality and perfect working conditions. The second author is grateful to S\l awomir Dinew and Dongwei Gu for many useful discussions.


\section{Estimates  in $\bC^n$} 
\label{S1}

In this section we wish to develop tools, which correspond to results in  pluripotential theory, to study the Hessian equations with respect to a Hermitian form.
Some of those analogues, notably the Chern-Levine-Nirenberg inequalities, do not carry over trivially and they  require a careful examination of the  properties of  positive cones associated with elementary symmetric functions.
 The difficulty is to control the negative values of a vector belonging to such a cone. 
First we prove point-wise estimates for the cone in $\bR^n$ and then we express them in the language of differential forms which live in  the cone associated with a Hermitian metric $\omega$ in $\bC^n$. 
Next, we  use these results to prove basic "pluripotential"  estimates for $(\omega, m)$-subharmonic function such as the Chern-Levine-Nirenberg inequality, the Bedford-Taylor convergence theorem, the weak comparison principle and the like.
We refer to  \cite{garding59, ivochkina83, lin-trudinger94} and \cite{wang09} for  the properties of elementary symmetric functions which are used here.

\subsection{Properties of elementary positive cones} \label{s11}

Let $1 \leq m < n$ be two integers. We denote by
\[	 
	\Gamma_m = \{ \lambda = (\lambda_1, ..., \lambda_n) \in \bR^n : S_1(\lambda) > 0, ..., S_m(\lambda)>0\}
\]
the  symmetric positive cone associated with polynomials  \[ S_k(\lambda) = \sum_{1 \leq i_1 < \cdots < i_k \leq n} \lambda_{i_1} \lambda_{i_2} \cdots \lambda_{i_k}.\] We use the conventions
\begin{align*}
	S_0(\lambda) &= 1, \\
	S_k(\lambda) &=0 \quad \mbox{for } k>n \mbox{ or } k<0.
\end{align*}
For any fixed $t$-tuple $\{i_1, ..., i_t\} \subseteq \{1,...,n\}$, we write
\[
	S_{k; i_1 i_2...i_t} (\lambda) := S_k |_{\lambda_{i_1} =\cdots =\lambda_{i_t}=0}.
\]
So $S_{k; i_1 i_2...i_t}$ is the $k$-th order elementary symmetric function of $(n-t)$ variables $\{1, ...,n\}\setminus \{i_1, ..., i_t\}$. 
A property that we frequently use in the sequel is 
\begin{equation} \label{incre-s-m}
	S_m(\lambda) \leq S_m(\lambda + \mu) \quad \mbox{for every }
	\lambda, \mu \in \Gamma_m 
\end{equation}
(see \cite{garding59}). Furthermore, a characterisation of the cone $\Gamma_m$ (see e.g. \cite[Lemma 8] {ivochkina83}) tells  that if $\lambda \in \Gamma_m$, then 
\begin{equation} \label{hom-ineq}
	S_{k; i_1,...,i_t} (\lambda)>0
\end{equation}
for all $\{i_1, ...,i_t\} \subseteq \{1,...,n\}$, $k+t \leq m$. In particular, if $\lambda\in \Gamma_m$, then at least $m$ of the numbers $\lambda_1, ..., \lambda_n$ are positive. 
Hence, throughout this note we shall write the entries of $\lambda \in \Gamma_m$ in the  decreasing order
\begin{equation} \label{order-lam}
\lambda_1 \geq \cdots \geq \lambda_m \geq \cdots \lambda_p>0 \geq \lambda_{p+1} \cdots \geq \lambda_n
\end{equation}
(with  $p\geq m$ by the remark above).
It is clear that
\begin{align}
	S_k(\lambda) = S_{k; i} + \lambda_i S_{k-1;i} (\lambda). 
\end{align}
Therefore  we have the following expansion
\begin{equation} \label{expand-formula}
\begin{aligned}
	S_{k-1}(\lambda) 
&=	 S_{k-1;1} + \lambda_1 S_{k-2;1} \\
&=	S_{k-1;1}	 + \lambda_1 S_{k-2;12} + \lambda_1 \lambda_2 S_{k-3;12}\\
&=	S_{k-1;1} +  \lambda_1 S_{k-2;12} + \cdots + \lambda_1 \cdots \lambda_{k-2} S_{1;12\cdots (k-1)} + \lambda_1 \cdots \lambda_{k-1}.
\end{aligned}
\end{equation}
It follows from  \eqref{hom-ineq} that for $\lambda \in \Gamma_m$
\begin{equation} \label{lower-sk}
S_{m-1}(\lambda) \geq \lambda_1 \cdots \lambda_{m-1}.
\end{equation}
A more general statement is also true. 
\begin{lem} \label{s-m-1}
Let $1\leq k \leq m-1$ and $\{i_1, ...,i_{k}\} \subset \{1, ..., n\}$. Then, for every $\lambda \in \Gamma_m$,
\[
	|\lambda_{i_1} \cdots \lambda_{i_{k}}| \leq C_{n,k} S_{k}(\lambda),
\]
where $C_{n,k}$ depends only on $n,k$. 
\end{lem}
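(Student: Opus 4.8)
The plan is to induct on $k$, using the decomposition \eqref{expand-formula} together with the positivity statements \eqref{hom-ineq} to peel off one factor $\lambda_{i_j}$ at a time, controlling it by a lower-order symmetric function. Since $\Gamma_m$ is invariant under permutation of the coordinates, it suffices to prove the bound for the distinguished indices $i_1=1, \dots, i_k=k$ in the ordering \eqref{order-lam}, i.e.\ to show $|\lambda_1 \cdots \lambda_k| \le C_{n,k} S_k(\lambda)$ for $1 \le k \le m-1$. By \eqref{order-lam} at least $m \ge k+1$ of the entries are strictly positive, so $\lambda_1, \dots, \lambda_k$ are among the larger entries; the product $\lambda_1\cdots\lambda_k$ is in fact nonnegative when all of $\lambda_1,\dots,\lambda_k>0$, and the genuinely delicate case is when some of $\lambda_1,\dots,\lambda_k$ could already be $\le 0$ — but by \eqref{hom-ineq} with $t=k$, $S_{0;1\cdots k}=1>0$ imposes no constraint, whereas $S_{1;1\cdots k}(\lambda) = \lambda_{k+1}+\cdots+\lambda_n > 0$ and more usefully $S_{j;1\cdots k}(\lambda)>0$ for $j \le m-k$, which tells us the "tail" cone is itself an $S_{\bullet}$-positive cone.

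For the inductive step I would write, via the recursion $S_j(\lambda) = S_{j;i} + \lambda_i S_{j-1;i}(\lambda)$ applied repeatedly as in \eqref{expand-formula},
\[
	S_k(\lambda) = S_{k;1}(\lambda) + \lambda_1 S_{k-1;12}(\lambda) + \cdots + \lambda_1\cdots\lambda_{k-1} S_{1;12\cdots k}(\lambda) + \lambda_1 \cdots \lambda_k.
\]
Here every coefficient $S_{k-j;12\cdots(j+1)}(\lambda)$ with $k-j+(j+1) = k+1 \le m$ is strictly positive by \eqref{hom-ineq}, and each partial product $\lambda_1 \cdots \lambda_j$ ($j\le k-1$) is, by the induction hypothesis applied at level $j \le k-1 \le m-2$, bounded in absolute value by $C_{n,j} S_j(\lambda)$. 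Rearranging to isolate $\lambda_1\cdots\lambda_k$ and taking absolute values gives
\[
	|\lambda_1\cdots\lambda_k| \le S_k(\lambda) + \sum_{j=0}^{k-1} (\lambda_1\cdots\lambda_j)\, S_{k-j;12\cdots(j+1)}(\lambda),
\]
so it remains to bound each term $(\lambda_1\cdots\lambda_j) S_{k-j;\,12\cdots(j+1)}(\lambda)$ by a constant times $S_k(\lambda)$. The factor $\lambda_1\cdots\lambda_j$ is handled by induction; the factor $S_{k-j;12\cdots(j+1)}(\lambda)$ — an elementary symmetric function in the remaining $n-j-1$ variables — must be bounded by a constant times $S_{k-j}(\lambda)$, or better, one combines the two to compare with $S_k(\lambda)$ directly. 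For this I would use the classical Maclaurin/Newton-type inequalities for $S_\bullet$ restricted to the sub-cone $\Gamma_{m-j}$ of the tail variables (which the $(j+1)$-fold application of \eqref{hom-ineq} guarantees is $S_\bullet$-positive up to order $m-j$), together with \eqref{incre-s-m}, to get $S_{k-j;12\cdots(j+1)}(\lambda)\,\lambda_1\cdots\lambda_j \le C\, S_k(\lambda)$. An alternative, cleaner route: note $S_k(\lambda) \ge \lambda_1\cdots\lambda_j \cdot S_{k-j;12\cdots j}(\lambda) \ge \lambda_1\cdots\lambda_j \cdot S_{k-j;12\cdots(j+1)}(\lambda)$ whenever the involved quantities are nonnegative (using that dropping one more positive variable only decreases a positive elementary symmetric function when the remaining variables lie in the appropriate cone), which immediately yields the desired bound with all constants absorbed into $C_{n,k}$.

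The main obstacle is exactly this sign bookkeeping: the partial products $\lambda_1\cdots\lambda_j$ and the truncated functions $S_{k-j;\,\ast}(\lambda)$ need not be nonnegative a priori, so one cannot naively drop terms from the expansion. The resolution is to exploit \eqref{order-lam}: since $\lambda_1 \ge \cdots \ge \lambda_p > 0$ with $p \ge m$, and since we only multiply together at most $k \le m-1$ of the $\lambda_i$, one can arrange the indices so that all partial products appearing are products of the \emph{largest} entries and hence, by \eqref{hom-ineq} (applied to subsets of size $<m$), are positive — more precisely $S_{k-j;\,12\cdots(j+1)}(\lambda)>0$ because $(k-j)+(j+1)\le m$, and then an induction shows each $\lambda_1\cdots\lambda_j>0$ is forced for $j \le m-1$ whenever $\lambda\in\Gamma_m$ is ordered as in \eqref{order-lam} (indeed $\lambda_m > 0$ since $p \ge m$). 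Once positivity of all the relevant quantities is secured, the expansion \eqref{expand-formula} makes every term on the right-hand side a nonnegative summand of $S_k(\lambda)$ (after the induction hypothesis replaces lower products), and the inequality with $C_{n,k}$ a purely combinatorial constant depending only on the number of terms in the expansions falls out.
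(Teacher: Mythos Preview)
Your argument has a genuine gap, rooted in a confusion at the very start. You write that by permutation invariance one may take $i_1=1,\dots,i_k=k$ \emph{while simultaneously} retaining the ordering \eqref{order-lam}; these are incompatible unless the selected indices already correspond to positive entries. If some $\lambda_{i_j}$ is among the negative entries, no relabelling moves it into the top $m$ ordered slots. Consequently your final paragraph's ``resolution'' --- arranging indices so that all partial products involve only the largest (hence positive) entries --- proves the lemma only in the easy case where every selected $\lambda_{i_j}>0$, and that case is immediate from $\lambda_{i_1}\cdots\lambda_{i_k} \le \lambda_1\cdots\lambda_k \le S_k(\lambda)$ via \eqref{lower-sk}. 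The nontrivial content of the lemma is precisely the case where some $\lambda_{i_j}<0$, and your proposal never addresses it.

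The inductive route you sketch also breaks at the key step. After invoking the hypothesis $|\lambda_{i_1}\cdots\lambda_{i_j}| \le C_{n,j}S_j(\lambda)$ you would need $S_j(\lambda)\,S_{k-j;\,i_1\cdots i_{j+1}}(\lambda) \le C\,S_k(\lambda)$, and this is false in general: with $\lambda=(N,1,\dots,1)$, $N\to\infty$, and $\{i_1,\dots,i_k\}\subset\{2,\dots,n\}$ one has $S_1\sim N$ and $S_{k-1;\,i_1 i_2}\sim N$ (the large coordinate $\lambda_1$ is not removed), yet $S_k\sim N$, so the ratio blows up like $N$. The passage through $S_j$ is simply too lossy, and the vague appeal to Maclaurin/Newton inequalities does not rescue it. What is missing is a direct bound on the negative factors: the paper observes that $S_{1;\,j_1\cdots j_k}(\lambda)>0$ for every $k$-tuple (since $\lambda\in\Gamma_m\subset\Gamma_{k+1}$), whence the most negative entry satisfies $|\lambda_{i_k}| \le (p-k)\lambda_{k+1}$; combined with $\lambda_{i_1}\cdots\lambda_{i_s}\le\lambda_1\cdots\lambda_s$ for the positive factors, this yields $|\lambda_{i_1}\cdots\lambda_{i_k}| \le (n-k)^k\,\lambda_1\cdots\lambda_k \le (n-k)^k\,S_k(\lambda)$ in one stroke.
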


\begin{proof} 
Since $k\leq m-1$ and $\lambda \in \Gamma_m \subset \Gamma_{k+1}$, the expansion formula \eqref{expand-formula} gives that
\[
	S_{k} \geq \lambda_1 \cdots \lambda_{k}.
\]
Therefore, if $\{i_1, ...,i_{k}\} \subseteq \{1,...,p\}$, i.e. $\lambda_{i_t}>0$ for all $t=1,...,k$, then we are done by the arrangement \eqref{order-lam}. Otherwise, without loss of generality, we may assume that 
\[
	\lambda_{i_1} \geq \cdots \geq \lambda_{i_s} >0 > 
	\lambda_{i_{s+1}} \cdots \geq \lambda_{i_{k}}.
\]
For brevity we write 
\[
	A = \lambda_{i_1} \cdots \lambda_{i_{k}}.
\]
Consequently, 
\begin{align*}
	|A| 
&= 	(\lambda_{i_1} \cdots \lambda_{i_s}) |\lambda_{i_{s+1}} \cdots \lambda_{i_{k}}|  \\
&\leq 	(\lambda_{i_1} \cdots \lambda_{i_s}) |\lambda_{i_{k}}|^{k-s}.
\end{align*}
By \eqref{hom-ineq} we have that the  sum of any $n-k$ of entries $\lambda_i$ is positive and hence
\[
	|\lambda_{i_{k}}| \leq (p - k) \lambda_{k+1}.
\]
Note that $p \geq m\geq  k+1$. 
Thus,  it follows from the lower bound for $S_{k}$ that
\begin{align*}
	|A| 
&	\leq (p-k)^{k-s} \lambda_{i_1} \cdots \lambda_{i_s} (\lambda_{k+1})^{k-s} \\
&	\leq (n-k)^{k} \lambda_1 \cdots \lambda_{k} \\
&	\leq  (n-k)^{k} S_{k} (\lambda).
\end{align*}
Thus, the lemma is proven.
\end{proof}

We also get an upper bound for $S_m$ in terms of $S_{m-1;j}$ as follows. There exists $\theta=\theta(n,m)>0$ such that for any $j \leq m$,
\begin{equation} \label{lower-sk-lam}
	\lambda_j S_{m-1;j}(\lambda) \geq \theta S_m (\lambda) \quad
	\mbox{if } \lambda \in \Gamma_m.
\end{equation} 
Indeed, by 
\[
	S_m = S_{m;j} + \lambda_j S_{m-1;j}
\]
we see that \eqref{lower-sk-lam} is automatically true if $S_{m;j} \leq 0$. Otherwise, $S_{m;j}(\lambda)>0$, and we can estimate as follows:
\begin{align*}
	S_m 
&	\leq C_{n,m} \lambda_1 \cdots \lambda_m \\
&	\leq C_{n,m} \lambda_j S_{m-1;j},
\end{align*}
where the second inequality used \eqref{hom-ineq} and \eqref{expand-formula}. The inequality \eqref{lower-sk-lam} thus follows.


If $m=n$, then the following result is just a simple consequence of the Cauchy-Schwarz inequality.  

\begin{lem} \label{stablity-extra-lem}
Let $a = (a_1,...,a_n) \in \bR^n$ and $\lambda \in \Gamma_m$. Then,
\[
	\frac{n S_1(\lambda)}{S_m(\lambda)} \cdot \left(\sum_{i=1}^n |a_i|^2 S_{m-1;i}(\lambda) \right)
	\geq  \theta  \sum_{i=1}^n |a_i|^2,
\]
where $\theta= \theta(n,m)>0$ is the constant in \eqref{lower-sk-lam}.
\end{lem}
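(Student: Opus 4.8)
The plan is to reduce the inequality to a pointwise estimate on each coordinate $i$, namely to show that
\[
	\frac{n S_1(\lambda)}{S_m(\lambda)} \cdot S_{m-1;i}(\lambda) \geq \theta
	\qquad \text{for every } i = 1,\dots,n,
\]
and then to sum against $|a_i|^2$. So the whole game is to bound $S_m(\lambda)$ from above by $n S_1(\lambda)\, S_{m-1;i}(\lambda)/\theta$ for each fixed $i$, with the same $\theta=\theta(n,m)$ appearing in \eqref{lower-sk-lam}. I would split into the two cases suggested by the structure of the problem, according to the size of $\lambda_i$ relative to $S_1(\lambda)$.

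First consider the indices $i$ for which $\lambda_i$ is ``large'', say $\lambda_i \geq S_1(\lambda)/n$; note $S_1(\lambda)>0$ since $\lambda\in\Gamma_m$, and such $i$ exist (in fact $i=1$ works because $\lambda_1\geq S_1(\lambda)/n$). For these indices I would combine the monotonicity-type bound \eqref{lower-sk-lam}, which gives $\lambda_i S_{m-1;i}(\lambda)\geq \theta S_m(\lambda)$ when $i\le m$, with the size of $\lambda_i$; more directly, when $\lambda_i\geq S_1(\lambda)/n$ one gets
\[
	n S_1(\lambda)\, S_{m-1;i}(\lambda) \geq n \cdot n\lambda_i \cdot S_{m-1;i}(\lambda) \cdot \tfrac1n \geq \lambda_i S_{m-1;i}(\lambda),
\]
so it suffices to have $\lambda_i S_{m-1;i}(\lambda)\geq \theta S_m(\lambda)$; for $i\le m$ this is exactly \eqref{lower-sk-lam}, and for general $i$ one argues as in the proof of \eqref{lower-sk-lam} using Lemma~\ref{s-m-1} and the expansion \eqref{expand-formula} to get $S_m(\lambda)\le C_{n,m}\lambda_1\cdots\lambda_m \le C_{n,m}' \lambda_i S_{m-1;i}(\lambda)$ after reordering. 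The hard part will be the ``small'' indices: when $\lambda_i < S_1(\lambda)/n$ (in particular this covers all the negative entries), I would instead observe that deleting the $i$-th variable barely changes the elementary symmetric functions, because $\lambda_i$ is controlled by $S_1(\lambda)$ and, via \eqref{hom-ineq} and Lemma~\ref{s-m-1}, the other entries are controlled by $S_m(\lambda)$; concretely, from $S_m = S_{m;i} + \lambda_i S_{m-1;i}$ one needs a lower bound for $S_{m-1;i}$ in terms of $S_m$, which one extracts by writing $S_{m-1;i}\geq \lambda_{j_1}\cdots\lambda_{j_{m-1}}$ for a suitable positive subfamily (using that at least $m$ entries are positive, so deleting one leaves at least $m-1$ positive ones), and then bounding $S_m\le C_{n,m}\lambda_1\cdots\lambda_m$ against that product times $S_1(\lambda)$.

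Once the pointwise inequality $n S_1(\lambda) S_{m-1;i}(\lambda)\geq \theta S_m(\lambda)$ is established uniformly in $i$, the conclusion is immediate: multiply by $|a_i|^2\geq 0$ and sum over $i$, giving
\[
	\frac{n S_1(\lambda)}{S_m(\lambda)}\sum_{i=1}^n |a_i|^2 S_{m-1;i}(\lambda) \geq \theta \sum_{i=1}^n |a_i|^2,
\]
which is the claim. I expect the main obstacle to be obtaining the clean uniform lower bound for $S_{m-1;i}(\lambda)/S_m(\lambda)$ for the indices with small or negative $\lambda_i$, since there the term $\lambda_i S_{m-1;i}$ may be negative and one cannot simply invoke \eqref{lower-sk-lam}; the resolution is that in that regime $S_{m;i}$ itself is already comparable to $S_m$ from above and to $S_1\cdot S_{m-1;i}$, so the Newton-type expansions \eqref{expand-formula} together with Lemma~\ref{s-m-1} do the job, at the cost of possibly shrinking $\theta$ by a dimensional factor — which is harmless since the statement only asserts existence of such a $\theta(n,m)$.
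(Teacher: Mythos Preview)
Your overall plan---reduce to the pointwise bound $nS_1(\lambda)S_{m-1;i}(\lambda)\geq \theta S_m(\lambda)$ for each $i$ and then sum against $|a_i|^2$---is exactly the right one, and it is also what the paper does. However, your execution is considerably more involved than necessary, and the case split according to $\lambda_i\gtrless S_1/n$ is not the natural one.

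The paper's argument avoids any case analysis by a single monotonicity observation. With the entries arranged as in \eqref{order-lam}, one has
\[
S_{m-1;i}(\lambda)-S_{m-1;j}(\lambda)=(\lambda_j-\lambda_i)\,S_{m-2;ij}(\lambda)\geq 0\qquad\text{for }i>j,
\]
since $S_{m-2;ij}>0$ by \eqref{hom-ineq}. Thus $S_{m-1;1}\leq S_{m-1;2}\leq\cdots\leq S_{m-1;n}$, so it suffices to verify the pointwise bound for $i=1$. But \eqref{lower-sk-lam} with $j=1$ gives $\theta S_m\leq \lambda_1 S_{m-1;1}$, and for $m\geq 2$ one has $\lambda_1< S_1$ (because $S_{1;1}=S_1-\lambda_1>0$ from \eqref{hom-ineq}). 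Hence $\theta S_m\leq S_1 S_{m-1;i}$ for every $i$, with the \emph{exact} constant $\theta$ from \eqref{lower-sk-lam}; no shrinking is needed. The paper then finishes with Cauchy--Schwarz, though as you observed, direct summation already suffices.

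Two specific comments on your write-up. First, in your ``large'' case the displayed chain $nS_1 S_{m-1;i}\geq n\cdot n\lambda_i\cdot S_{m-1;i}\cdot\tfrac1n\geq \lambda_i S_{m-1;i}$ actually uses $S_1\geq\lambda_i$, which holds for \emph{all} $i$ when $m\geq 2$; your hypothesis $\lambda_i\geq S_1/n$ plays no role there, so the case distinction is doing no work. Second, for indices $i>m$ your plan to ``argue as in the proof of \eqref{lower-sk-lam}'' would indeed go through (using $S_m\leq C_{n,m}\lambda_1\cdots\lambda_m$ and $S_{m-1;i}\geq \lambda_1\cdots\lambda_{m-1}$), but only after losing a dimensional factor, which conflicts with the statement's assertion that the constant is precisely the $\theta$ of \eqref{lower-sk-lam}. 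The monotonicity step above removes both issues at once.
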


\begin{proof}
If $m=1$, then it is obvious. So we may assume that $m\geq 2$. Therefore, from  \eqref{order-lam}   and \eqref{lower-sk}     we have that
\[
	S_1 \geq \lambda_1, \quad S_{m-1,n} \geq S_{m-1;n-1} \geq \cdots \geq S_{m-1;1}>0.
\]
Moreover, by \eqref{lower-sk-lam}
\[
	\theta S_m \leq \lambda_1 S_{m-1;1}.
\]
Hence, for $m \geq 2$,
\[
	0< \frac{S_{m}}{S_{m-1;n}} \leq \cdots \leq \frac{S_m}{S_{m-1;1}} 
	\leq  \lambda_1/\theta \leq S_1/\theta ,
\]
and therefore
\[
\frac{n S_1}{ \theta S_m} \cdot \left(\sum_{i=1}^n |a_i|^2 S_{m-1;i} \right) 
\geq  \left(\sum_{i=1}^n \frac{1}{S_{m-1;i}}\right) \left(\sum_{i=1}^n |a_i|^2 S_{m-1;i} \right). 
\]
The lemma now follows by an application of the Cauchy-Schwarz inequality to the right hand side of the above inequality.
\end{proof}

\subsection{The positive cones associated with a Hermitian metric} \label{s12}

Let $\omega$ be a Hermitian metric on $\bC^n$ and let $\Omega$ be a bounded open set in $\bC^n$. Given a smooth Hermitian $(1,1)$-form $\gamma$ in $\Omega$, we say that $\gamma$ is $(\omega,m)$-positive if at any point $z\in \Omega$ it satisfies
\[
	\gamma^k \wedge \omega^{n-k}(z)  >0 \quad \mbox{for every} \quad k =1,...,m.
\]
Equivalently, in the normal coordinates with respect to $\omega$ at $z$,
diagonalizing  $\gamma = \ii \sum_i \lambda_i dz_i \wedge d\bar z_i$,  we have \[ \lambda = (\lambda_1,...,\lambda_n) \in \Gamma_m.\] 
This correspondence allows to express the estimates from Section~\ref{s11} in the language of differential forms. First of them can be found in \cite{blocki05}.
We denote the set of all $(\omega,m)$-positive smooth Hermitian $(1,1)$-forms by $ \Gamma_m(\omega, \Omega)$ or $\Gamma_m(\omega)$, when the domain $\Omega$ is clear from the context.


The inequality \eqref{incre-s-m} is equivalent to
\begin{equation} \label{incre-s-m-form}
	(\gamma + \eta)^m \wedge \omega^{n-m} 
	\geq \gamma^m\wedge \omega^{n-m} \quad \mbox{for every } 
	\gamma, \eta \in \Gamma_m(\omega).
\end{equation}

Lemma~\ref{s-m-1} gives a statement important for our applications.

\begin{lem} \label{form-s-m-1} 
Let $\gamma \in \Gamma_m(\omega)$ and $T$ is a smooth $(n-k,n-k)$-form with $1 \leq k \leq m-1$. Then,
\[
	|\gamma^{k} \wedge T/\omega^n| 
	\leq C_{n,k,\|T\|} \; \gamma^{k} \wedge\omega^{n-k}/\omega^n,
\]
where $C_{n,k,\|T\|}$ is a uniform constant depending only on $n,k$ and the sup norm of coefficients of $T$. 
\end{lem}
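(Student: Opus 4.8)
The plan is to reduce the statement about forms to the pointwise inequality of Lemma~\ref{s-m-1} by working in normal coordinates for $\omega$ at an arbitrary fixed point $z$. So first I would diagonalize: at $z$, choose coordinates so that $\omega(z) = \ii \sum_i dz_i\wedge d\bar z_i$ and $\gamma(z) = \ii\sum_i \lambda_i\, dz_i\wedge d\bar z_i$ with $\lambda = (\lambda_1,\dots,\lambda_n)\in\Gamma_m$ by the correspondence recalled in Section~\ref{s12}. In these coordinates one computes $\gamma^k\wedge\omega^{n-k}/\omega^n$ as (a constant times) $S_k(\lambda)$, and $\gamma^k\wedge\omega^{n-k}$ evaluated against the volume form is governed by $S_k(\lambda)$.

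Next I would expand $\gamma^k\wedge T$ at $z$. Writing $T = \sum T_{I\bar J}\, dz_I\wedge d\bar z_J$ over increasing multi-indices $I,J$ of length $n-k$, the wedge $\gamma^k\wedge dz_I\wedge d\bar z_J$ is nonzero only when $I = J$ (since $\gamma$ is diagonal), and then it picks out exactly the product $\prod_{i\notin I}\lambda_i$ times the volume form, up to a combinatorial constant. Hence $\gamma^k\wedge T/\omega^n$ is, up to a universal constant, a sum over $k$-subsets $\{i_1,\dots,i_k\} = \{1,\dots,n\}\setminus I$ of terms $T_{I\bar I}\,\lambda_{i_1}\cdots\lambda_{i_k}$. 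Taking absolute values and using $|T_{I\bar I}|\le \|T\|$ together with the bound on the number of such subsets, we get
\[
	|\gamma^k\wedge T/\omega^n| \leq C_{n,k}\,\|T\| \sum_{1\le i_1<\cdots<i_k\le n} |\lambda_{i_1}\cdots\lambda_{i_k}|.
\]
Now Lemma~\ref{s-m-1} applies term by term: each $|\lambda_{i_1}\cdots\lambda_{i_k}| \le C_{n,k} S_k(\lambda)$ since $k\le m-1$ and $\lambda\in\Gamma_m$. Summing over the $\binom{n}{k}$ subsets and recalling $S_k(\lambda) = c_{n,k}\,\gamma^k\wedge\omega^{n-k}/\omega^n$ yields the claimed inequality with $C_{n,k,\|T\|}$ depending only on $n$, $k$, and $\sup$ of the coefficients of $T$. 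Since $z$ was arbitrary, the pointwise inequality holds everywhere.

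The only genuinely delicate point is bookkeeping the combinatorial constants in the expansion of $\gamma^k\wedge T$ in normal coordinates and making sure they are absorbed into $C_{n,k}$ independently of the point $z$ — this is routine but must be done carefully, in particular checking that only the diagonal coefficients $T_{I\bar I}$ of $T$ contribute and that off-diagonal terms wedge to zero against the diagonalized $\gamma^k$. A secondary subtlety is that "normal coordinates with respect to $\omega$" only diagonalize $\omega$ and $\gamma$ at the single point $z$; but since the inequality is pointwise, this is exactly enough, and no control of derivatives of the metric is needed. Everything else is an immediate consequence of Lemma~\ref{s-m-1}.
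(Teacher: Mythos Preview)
Your proof is correct and follows essentially the same route as the paper: fix a point, diagonalize $\omega$ and $\gamma$ simultaneously, expand $\gamma^k\wedge T$ in those coordinates, and bound each surviving coefficient $\prod_{i_s\in I}\lambda_{i_s}$ by $C_{n,k}S_k(\lambda)$ via Lemma~\ref{s-m-1}. Your observation that only the diagonal coefficients $T_{I\bar I}$ contribute is correct (the paper phrases this as the condition $I\cup J=I\cup K=\{1,\dots,n\}$ with $|I|=k$, $|J|=|K|=n-k$, which indeed forces $J=K=\{1,\dots,n\}\setminus I$), and your handling of the pointwise nature of the estimate is fine.
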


\begin{proof} 
Fix a point $P \in \Omega$. Choose a local coordinate system  at $P$ such that
\[
	\omega = \sum_{j =1}^n \ii dz_j \wedge d\bar z_j \quad \mbox{and}\quad
	\gamma= \sum_{j =1}^n \lambda_j \ii dz_j \wedge d\bar z_j.
\]
In those coordinates we write 
\[
T =	\sum_{|J|=|K| = n-k} T_{JK}  dz_J \wedge d \bar z_K. 
\]
In what follows, the computation is performed at $P$. We first have
\[
	\gamma^{k} 
	=k! \sum_{|I| = k, I \subseteq \{1,..,n\}} 
	\prod_{i_s \in I} \lambda_{i_s} dz_I \wedge d\bar z_I.
\]
The nonzero contribution in $\gamma^{k} \wedge T$
give only triplets of  multi-indices $I, J, K \subseteq \{1,...,n\}$ such that
\[
	I  \cup J = I  \cup K = \{1,...,n\},
\]
and $|I| = k$.
For such  sets $I, J, K$, we have
\[
	\frac{n!}{k!}(\ii)^{(n-k)^2} \gamma^{k} \wedge dz_J \wedge d\bar z_K/\omega^n=   \prod_{i_s \in I, |I| = k} \lambda_{i_s}.
\]
By Lemma~\ref{s-m-1} 
\begin{align*}
	  \prod_{i_s \in I,|I| = k} |\lambda_{i_s}| 
&\leq		 C_{n,k}   S_{k} (\lambda) \\
&=		C_{n,k} \binom{n}{k}	\gamma^{k} \wedge \omega^{n-k}/\omega^{n},
\end{align*}
where the constant $C_{n,k}$ depends only on $n,k$. 
Taking into account the coefficients $T_{JK}$, we get that each term in 
\[
	| \gamma^{k} \wedge T/\omega^n|
\] 
is bounded from above  by 
\[
 	 \gamma^{k} \wedge \omega^{n-k}/\omega^n,
\]
modulo a uniform constant $C_{n,k,\|T\|} = C_{n,k} \sup_{J,K}\|T_{JK}\|_\infty$, where $C_{n,k}$ may differ from the one above. Thus, the lemma follows.
\end{proof}

 We  need to generalise the last result  to the case of the wedge product of $k $ smooth Hermitian $(1,1)$-forms in $\Gamma_m(\omega)$ in place  of $\gamma ^k$. 
To do this, fix $k \leq m-2$ and  consider vectors $x = (x_1, ...,x_{k}) \in [0,1]^k \subset \bR^k$.
 The $\bR$-vector space of polynomials in $x$ of degree at most $k+1$ is denoted here by $P_{k+1}(\bR^k)$. Its dimension is equal to $d =\binom{2k+1}{k}$. 
We use  multi-indices $\alpha = (\alpha_1,...,\alpha_k) \in \bN^k$, with the length $|\alpha| := \alpha_1 + \cdots \alpha_k$, and ordered in some fixed fashion. The vector space $P_{k+1}(\bR^k)$ has the standard monomial basis
\[
	\{ x^\alpha = x_1^{\alpha_1}  \cdots x_k^{\alpha_k}:
	\quad |\alpha| \leq k+1 \} =: \{e_1, ..., e_d\},
\]
where $d = \binom{2k+1}{k}$. Choose 
a  set $X = \{X_1,...,X_d\}$, with  $X_i \in [0,1]^k$, 
such that the Vandermonde matrix 
\[
	V:= \{e_i(X_j)\}_{i,j =\overline{1,d}} 
\]
is non singular. 

Now,  for  $x \in [0,1]^{k}$ and $y = (\gamma_0,...,\gamma_k), \gamma_{j} \in \Gamma_m(\omega),$ consider the polynomial 
\begin{align*}
	P(x, y) & = (\gamma_0+ x_1 \gamma_1 + \cdots + x_k \gamma_k)^{k+1} \wedge T/\omega^n \\
&=:	\sum_{|\alpha| \leq k+1}  b_{\alpha}(y)x^\alpha,
\end{align*}
where $T$ is a smooth $(n-k-1,n-k-1)$-form and
\[ 
	b_{\alpha} (y) 
	= \frac{(k+1)!}{\alpha_1! \cdots \alpha_k!}\gamma_0^{k+1 - |\alpha|} \wedge \gamma_1^{\alpha_1} \wedge \cdots \wedge \gamma_k^{\alpha_k} \wedge T/\omega^n. \]
Put $
	\tau:= \gamma_0+ x_1 \gamma_1 + \cdots + x_k \gamma_k. 
$
By Lemma~\ref{form-s-m-1} we get that for every $x\in [0,1]^k$,
\[
|P(x,y)| \leq C \tau^{k+1} \wedge \omega^{n-k-1}/\omega^n 
\leq C (\gamma_0+ \cdots + \gamma_k)^{k+1} \wedge \omega^{n-k-1}/\omega^n.
\]
In particular  $|P(X_j , y)|$, for $X = \{X_1,...,X_d\}$ fixed above, are uniformly bounded by the right hand side of the last inequality.
The coefficients $b_{\alpha} (y) $ are computed by applying the inverse of $V$ to the column vector consisting of entries $P(X_j ,y)$.
Since $V$ is a fixed matrix we obtain the desired bound and the following statement.

\begin{cor} \label{poly-lem}
 Fix $k\leq m-2$.  Let $T$ be a smooth $(n-k-1,n-k-1)$-form. For $\gamma_0,...,\gamma_{k} \in \Gamma_m(\omega)$ we have
\[
	|\gamma_0 \wedge \cdots \wedge \gamma_{k}\wedge T/\omega^n| \leq
	C_{n,m,\|T\|} (\gamma_0 + \cdots + \gamma_{k})^{k+1} \wedge \omega^{n-k-1}/\omega^n,
\] 
where $C_{n,k,\|T\|}$ is a uniform constant depending only on $n,k$ and the sup norm of coefficients of $T$. 
\end{cor}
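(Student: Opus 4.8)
The plan is to recover the multilinear bound from the already-proven monomial bound (Lemma~\ref{form-s-m-1}) by a polynomial-interpolation trick, exactly as set up in the paragraph preceding the corollary. Fix a point $P\in\Omega$ and work in coordinates where $\omega$ is the standard form there; everything below is evaluated at $P$. For $x=(x_1,\dots,x_k)\in[0,1]^k$ and $y=(\gamma_0,\dots,\gamma_k)$ with each $\gamma_j\in\Gamma_m(\omega)$, introduce $\tau=\tau(x)=\gamma_0+x_1\gamma_1+\cdots+x_k\gamma_k$ and expand
\[
	P(x,y)=\tau^{k+1}\wedge T/\omega^n=\sum_{|\alpha|\le k+1} b_\alpha(y)\,x^\alpha,
\]
so that the coefficient $b_\alpha(y)$ is (up to the explicit multinomial factor) the mixed expression $\gamma_0^{\,k+1-|\alpha|}\wedge\gamma_1^{\alpha_1}\wedge\cdots\wedge\gamma_k^{\alpha_k}\wedge T/\omega^n$ whose size we want to control; in particular the fully mixed term $\gamma_0\wedge\cdots\wedge\gamma_k\wedge T/\omega^n$ appears as the coefficient of $x_1x_2\cdots x_k$ (with combinatorial factor $(k+1)!$).

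Next I would observe that $\tau(x)\in\Gamma_m(\omega)$ for every $x\in[0,1]^k$, because $\Gamma_m$ is a convex cone and $\tau(x)$ is a nonnegative combination of the $\gamma_j$; hence Lemma~\ref{form-s-m-1} applies with $\gamma$ replaced by $\tau(x)$ and $T$ of bidegree $(n-k-1,n-k-1)$, giving
\[
	|P(x,y)|\le C_{n,k,\|T\|}\,\tau(x)^{k+1}\wedge\omega^{n-k-1}/\omega^n.
\]
Then, since $0\le x_j\le 1$ and $\Gamma_m(\omega)$-forms only increase the quantity $(\cdot)^{k+1}\wedge\omega^{n-k-1}$ under addition of $\Gamma_m(\omega)$-forms (the form version of \eqref{incre-s-m}, as in \eqref{incre-s-m-form} applied with exponent $k+1\le m-1$), we bound $\tau(x)^{k+1}\wedge\omega^{n-k-1}\le(\gamma_0+\cdots+\gamma_k)^{k+1}\wedge\omega^{n-k-1}$ pointwise. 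Writing $M:=C_{n,k,\|T\|}(\gamma_0+\cdots+\gamma_k)^{k+1}\wedge\omega^{n-k-1}/\omega^n$, this yields $|P(X_j,y)|\le M$ for each of the $d=\binom{2k+1}{k}$ interpolation points $X_j\in[0,1]^k$ chosen so that the Vandermonde matrix $V=\{e_i(X_j)\}$ is invertible.

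Finally, the vector of coefficients $(b_\alpha(y))_{|\alpha|\le k+1}$ is obtained from the vector $(P(X_j,y))_j$ by applying $V^{-1}$, a fixed matrix depending only on $n$ and $k$. Hence each $b_\alpha(y)$ is bounded by $\|V^{-1}\|_\infty\cdot d\cdot M$, and in particular the fully mixed coefficient satisfies $|\gamma_0\wedge\cdots\wedge\gamma_k\wedge T/\omega^n|\le C_{n,k,\|T\|}\,(\gamma_0+\cdots+\gamma_k)^{k+1}\wedge\omega^{n-k-1}/\omega^n$ after absorbing all the dimensional constants into $C_{n,k,\|T\|}$; the dependence on $T$ enters only through $\sup_{J,K}\|T_{JK}\|_\infty$, inherited from Lemma~\ref{form-s-m-1}. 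The only point that needs a little care — and the closest thing to an obstacle — is checking that Lemma~\ref{form-s-m-1} is genuinely applicable throughout: we need $k+1\le m-1$, i.e. $k\le m-2$, which is exactly the hypothesis of the corollary, and we need the monotonicity inequality for the $(k+1)$-st power, which again is fine since $k+1\le m-1<m$; the convexity of the cone and the fixed, nonsingular choice of $X$ make the rest purely mechanical.
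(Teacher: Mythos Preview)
Your argument is correct and follows the paper's own proof essentially line for line: the polynomial $P(x,y)=\tau^{k+1}\wedge T/\omega^n$, the bound via Lemma~\ref{form-s-m-1} and the monotonicity \eqref{incre-s-m-form} (valid with exponent $k+1\le m-1$ since $\Gamma_m\subset\Gamma_{k+1}$), and the inversion of the fixed Vandermonde matrix $V$ are exactly the steps the paper carries out in the paragraph preceding the corollary. Your care in checking the applicability conditions ($k+1\le m-1$, $\tau\in\Gamma_m(\omega)$ by convexity) only makes explicit what the paper leaves implicit.
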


We end this subsection with the consequence of Lemma~\ref{stablity-extra-lem}. This will be used later in  the proof of the stability of solutions  to the Hessian equations.

\begin{lem} \label{stability-extra-2}
Let $\psi$ be a smooth function and $\gamma\in \Gamma_m(\omega)$. Then,
\[
	\frac{\ii \d \psi \wedge \bar\d \psi \wedge \gamma^{m-1}\wedge \omega^{n-m}}{\gamma^m \wedge \omega^{n-m}} 
	\cdot \frac{\gamma \wedge \omega^{n-1}}{\omega^n}
	 \geq  \frac{\theta\ii \d \psi \wedge \bar\d \psi \wedge \omega^{n-1}}{\omega^n},
\]
where $\theta = \theta(n,m)>0$.
\end{lem}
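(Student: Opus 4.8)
The plan is to prove the inequality pointwise. Fix $P \in \Omega$ and, exactly as in the proof of Lemma~\ref{form-s-m-1}, choose a coordinate system at $P$ in which
\[
	\omega = \sum_{j=1}^n \ii\, dz_j \wedge d\bar z_j, \qquad
	\gamma = \sum_{j=1}^n \lambda_j\, \ii\, dz_j \wedge d\bar z_j,
\]
with $\lambda = (\lambda_1, \dots, \lambda_n) \in \Gamma_m$ by the definition of $\Gamma_m(\omega)$ recorded in Subsection~\ref{s12}. Writing $a_i = \d\psi / \d z_i$, one has at $P$ that $\ii\, \d\psi \wedge \bar\d\psi = \ii \sum_{i,j=1}^n a_i \bar a_j\, dz_i \wedge d\bar z_j$, and all four wedge products in the statement are to be expanded at $P$ against this diagonalising basis.

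The key observation is that $\gamma^{m-1}\wedge\omega^{n-m}$ and $\omega^{n-1}$ are both linear combinations of the $(n-1,n-1)$-forms $\prod_{s\in J}\ii\, dz_s\wedge d\bar z_s$, $|J|=n-1$; hence, upon wedging with $\ii\,\d\psi\wedge\bar\d\psi$, only the diagonal terms $i=j$ survive, and one is left with $\sum_i |a_i|^2$ multiplying a diagonal $(n,n)$-form. Carrying out the same bookkeeping as in the proof of Lemma~\ref{form-s-m-1}, I expect to obtain, at $P$,
\[
	\frac{\gamma^m\wedge\omega^{n-m}}{\omega^n} = \frac{m!\,(n-m)!}{n!}\, S_m(\lambda),
	\qquad \frac{\gamma\wedge\omega^{n-1}}{\omega^n} = \frac1n\, S_1(\lambda),
\]
\[
	\frac{\ii\,\d\psi\wedge\bar\d\psi\wedge\gamma^{m-1}\wedge\omega^{n-m}}{\omega^n} = \frac{(m-1)!\,(n-m)!}{n!} \sum_{i=1}^n |a_i|^2\, S_{m-1;i}(\lambda),
\]
\[
	\frac{\ii\,\d\psi\wedge\bar\d\psi\wedge\omega^{n-1}}{\omega^n} = \frac1n \sum_{i=1}^n |a_i|^2.
\]
Substituting these four identities into the left-hand side of the claimed inequality, the combinatorial prefactors combine to $\tfrac{1}{mn}$, so that the left-hand side becomes $\tfrac{1}{mn}\cdot\tfrac{S_1(\lambda)}{S_m(\lambda)} \sum_i |a_i|^2\, S_{m-1;i}(\lambda)$, while the right-hand side is $\tfrac{\theta}{n}\sum_i |a_i|^2$.

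Finally, the resulting inequality is precisely Lemma~\ref{stablity-extra-lem} applied to the vector $a=(a_1,\dots,a_n)$ and $\lambda\in\Gamma_m$: that lemma yields $\tfrac{S_1(\lambda)}{S_m(\lambda)}\sum_i |a_i|^2\, S_{m-1;i}(\lambda)\geq \tfrac{\theta_0}{n}\sum_i |a_i|^2$, with $\theta_0=\theta_0(n,m)>0$ the constant of \eqref{lower-sk-lam}, and one concludes with $\theta=\theta_0/(mn)$, which depends only on $n$ and $m$. I do not expect a genuine obstacle here: the delicate estimate --- controlling $S_m(\lambda)/S_{m-1;i}(\lambda)$ from above on $\Gamma_m$, where some of the $\lambda_i$ may be negative --- is already encapsulated in Lemma~\ref{stablity-extra-lem} via the Cauchy-Schwarz inequality, so what remains is the pointwise expansion above, whose only subtle points are the vanishing of the off-diagonal cross terms of $\ii\,\d\psi\wedge\bar\d\psi$ and the accurate tracking of the four combinatorial constants.
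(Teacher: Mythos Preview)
Your proof is correct and follows exactly the same approach as the paper: choose coordinates at a point in which $\omega$ is standard and $\gamma$ is diagonal, express each ratio in terms of $S_m(\lambda)$, $S_1(\lambda)$, $S_{m-1;i}(\lambda)$ and $|a_i|^2$ with $a_i=\d\psi/\d z_i$, and then invoke Lemma~\ref{stablity-extra-lem}. The paper's proof states this in one line; your version simply fills in the explicit combinatorial bookkeeping, which is accurate.
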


\begin{proof} 
It is an application of Lemma~\ref{stablity-extra-lem} in the normal coordinates with respect to $\omega$, where $a = (\psi_1, ...,\psi_n)$ with $\psi_i := \d \psi/\d z_i$ and $\lambda$ is the vector of  eigenvalues of $\gamma$ in those coordinates.
\end{proof}

\subsection{$(\omega,m)$-subharmonic functions}\label{s13}

Let $\Omega$ be a bounded open set in $\bC^n$. Assume that $\omega$ is a Hermitian metric on $\bC^n$. Fix an integer  $1\leq m < n$.

In this subsection we are going to define the notion of $(\omega,m)$-subharmonicity for non-smooth functions which is adapted from B\l ocki \cite{blocki05} and  Dinew-Ko\l odziej \cite{dinew-kolodziej12, dinew-kolodziej14}.
 We refer to papers by Lu \cite{chinh13}, Lu-Nguyen \cite{chinh-dong}, Dinew-Lu \cite{chinh-dinew} for more properties of this class of functions when $\omega$ is a K\"ahler metric.
Then, we will prove several  results which correspond to  basic pluripotential theory theorems from \cite{BT76, BT82}.
 

A $C^2(\Omega)$ real-valued function $u$ is called $(\omega,m)$-subharmonic if the associated form 
$ \omega_u := \omega + dd^c u$  belongs to $ \overline{\Gamma_m(\omega)}$.
It means that 
\[
	\omega_u^k \wedge \omega^{n-k} \geq 0 \quad \mbox{for every}\quad
	k =1,...,m.
\]

\begin{defn} \label{non-smooth-omega-m}
An  upper semi-continuous function $u: \Omega \to [-\infty, +\infty[ $ is called $(\omega,m)$-subharmonic if $u\in L^1_{loc}(\Omega)$ and for any collection of  $\gamma_1, ...,\gamma_{m-1} \in \Gamma_m(\omega)$
\[
	(\omega + dd^c u) \wedge \gamma_1 \wedge \cdots \wedge \gamma_{m-1} \wedge \omega^{n-m} \geq 0
\]
with the inequality understood in the sense of currents. 
\end{defn}
We denote  by $SH_{m}(\Omega, \omega)$ the set of all $(\omega,m)$-subharmonic functions in $\Omega$. We often write $SH_m(\omega)$ if the domain is clear from the context. 

\begin{observation} \label{equi-two-defn}
By results of G\aa rding \cite{garding59}, if $u \in C^2(\Omega)$, then $u$ is $(\omega,m)$-subharmonic  according to  Definition~\ref{non-smooth-omega-m} if and only if $\omega_u \in \overline{\Gamma_m(\omega)}$. In particular, we have that for $\gamma_1, ..., \gamma_k \in \Gamma_m(\omega)$, $k \leq m$, 
\[
	\gamma_1 \wedge \cdots \wedge \gamma_k \wedge \omega^{n-m} 
\]
is a strictly positive $(n-m + k, n-m+k)$-form.
\end{observation}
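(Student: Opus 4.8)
The plan is to deduce both assertions from G\aa rding's theory of the hyperbolic polynomial $S_m$ (whose G\aa rding cone is $\Gamma_m$), see \cite{garding59}, combined with the correspondence recalled above from \cite{blocki05} between $(\omega,m)$-positivity of a $(1,1)$-form and membership of its $\omega$-eigenvalues in $\Gamma_m$. Throughout we identify Hermitian $(1,1)$-forms with their matrices of $\omega$-eigenvalues, and we use that a \emph{continuous} $(n,n)$-form is positive in the sense of currents if and only if its coefficient against $\omega^n$ is pointwise nonnegative; since $u\in C^2$ makes $\omega_u$ and all the wedge products below continuous, the equivalence is a pointwise statement. (For $m=1$ it is immediate, so assume $m\geq 2$.)

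Fix $z\in\Omega$. If $\omega_u(z)\in\overline{\Gamma_m(\omega)}$, then for any $\gamma_j\in\Gamma_m(\omega)$ the forms $\omega_u(z),\gamma_1(z),\dots,\gamma_{m-1}(z)$ all lie in $\overline{\Gamma_m(\omega)}$ as Hermitian forms at $z$, and G\aa rding's inequality for the polarization of $S_m$ — viewed as a hyperbolic polynomial on the space of Hermitian $(1,1)$-forms at $z$ with G\aa rding cone $\Gamma_m(\omega)$ — shows that the coefficient of $\omega_u\wedge\gamma_1\wedge\cdots\wedge\gamma_{m-1}\wedge\omega^{n-m}$ at $z$ is $\geq 0$; this gives one implication. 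Conversely, suppose $\omega_u(z_0)\notin\overline{\Gamma_m(\omega)}$. By the duality in G\aa rding's theory, $\overline{\Gamma_m(\omega)}$ is precisely the set of Hermitian forms $B$ with $S_m(B,C_1,\dots,C_{m-1})\geq 0$ for all $C_i\in\Gamma_m(\omega)$, so there are Hermitian $(1,1)$-forms $\beta_1,\dots,\beta_{m-1}$ at $z_0$, with $\omega$-eigenvalues in $\Gamma_m$, such that $\omega_u\wedge\beta_1\wedge\cdots\wedge\beta_{m-1}\wedge\omega^{n-m}/\omega^n<0$ at $z_0$. Extend each $\beta_j$ to a constant-coefficient form in a coordinate chart about $z_0$, choose a cutoff $\chi$ equal to $1$ near $z_0$ and supported in a small ball on which every $\beta_j$ still lies in $\Gamma_m(\omega)$, and put $\gamma_j:=\chi\beta_j+(1-\chi)\omega$. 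At each point the $\omega$-eigenvalue vector of $\gamma_j$ is a convex combination of that of $\beta_j$ and $(1,\dots,1)$, both in $\Gamma_m$, so by convexity of $\Gamma_m$ each $\gamma_j\in\Gamma_m(\omega)$ on all of $\Omega$, while $\gamma_j=\beta_j$ near $z_0$; hence the current $\omega_u\wedge\gamma_1\wedge\cdots\wedge\gamma_{m-1}\wedge\omega^{n-m}$ has negative coefficient near $z_0$, and $u$ violates Definition~\ref{non-smooth-omega-m}.

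For the strict positivity of $\Phi:=\gamma_1\wedge\cdots\wedge\gamma_k\wedge\omega^{n-m}$ it suffices, writing $t:=m-k$, to prove $\Phi\wedge\ii^{\,t}\zeta_1\wedge\bar\zeta_1\wedge\cdots\wedge\zeta_t\wedge\bar\zeta_t>0$ for all linearly independent $(1,0)$-forms $\zeta_1,\dots,\zeta_t$. Set $\nu:=\ii\sum_{i=1}^t\zeta_i\wedge\bar\zeta_i$, a positive semidefinite $(1,1)$-form of rank exactly $t$; its $\omega$-eigenvalue vector has the form $(\mu_1,\dots,\mu_t,0,\dots,0)$ and hence lies in $\overline{\Gamma_m}$, and the claim reduces to $\gamma_1\wedge\cdots\wedge\gamma_k\wedge\nu^{t}\wedge\omega^{n-m}>0$. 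Working at a fixed point, write $\gamma_i=c_i\omega+\gamma_i'$ with $c_i>0$ small enough that $\gamma_i'\in\Gamma_m(\omega)$. Expanding the product, every resulting term is a wedge of $m$ forms from $\overline{\Gamma_m(\omega)}$ (copies of $\omega$, copies of $\nu$, and some of the $\gamma_i'$) with $\omega^{n-m}$, hence has nonnegative coefficient by G\aa rding's inequality, so
\[
	\gamma_1\wedge\cdots\wedge\gamma_k\wedge\nu^{t}\wedge\omega^{n-m}\ \geq\ \Big(\,\prod_{i=1}^{k}c_i\Big)\,\omega^{\,n-t}\wedge\nu^{t}.
\]
Finally, diagonalising $\omega$ at the point, a direct expansion gives $\omega^{\,n-t}\wedge\nu^{t}/\omega^n=\binom{n}{t}^{-1}\sum_{|J|=t}\det h_J$, where $h_J$ denotes the principal submatrix, on the rows and columns in $J$, of the matrix of $\nu$; this is strictly positive because a positive semidefinite matrix of rank $t$ admits a nonsingular — hence positive definite — $t\times t$ principal submatrix.

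The delicate step is the converse of the equivalence: transporting G\aa rding's duality for $S_m$ into a sign statement for an honest wedge product and then manufacturing, from the pointwise test forms $\beta_j$, genuine globally smooth $(\omega,m)$-positive forms without spoiling the sign at $z_0$ — here convexity of $\Gamma_m$ is exactly what keeps the cutoff inside the cone. By contrast, the two positivity computations (G\aa rding's inequality and the expansion producing $\sum_{|J|=t}\det h_J$) are routine linear algebra.
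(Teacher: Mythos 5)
Your argument is correct, but it is worth noting that the paper does not actually prove this Remark: it is stated as a known consequence of G\aa rding's theory \cite{garding59}, via the correspondence (set up just before, following \cite{blocki05}) between $(\omega,m)$-positivity of a $(1,1)$-form and membership of its $\omega$-eigenvalues in $\Gamma_m$. What you have written is a correct reconstruction of that citation, and two of your additions are genuinely useful: the cutoff construction turning a pointwise counterexample $\beta_j$ into globally defined test forms $\gamma_j\in\Gamma_m(\omega)$ (which Definition~\ref{non-smooth-omega-m} requires), and the explicit verification of \emph{strict} positivity via the decomposition $\gamma_i=c_i\omega+\gamma_i'$ and the identity $\omega^{n-t}\wedge\nu^t/\omega^n=\binom{n}{t}^{-1}\sum_{|J|=t}\det h_J$. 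Two remarks on the details. First, your convexity step for $\gamma_j=\chi\beta_j+(1-\chi)\omega$ is valid, but only because one of the two forms is $\omega$ itself: $\omega^{-1}\gamma_j=\chi\,\omega^{-1}\beta_j+(1-\chi)\mathrm{Id}$, so the eigenvalues really are the affine combination $\chi\lambda(\beta_j)+(1-\chi)(1,\dots,1)$; for a convex combination of two general forms the eigenvalue vectors do not combine linearly, and one would instead invoke convexity of the matrix-level G\aa rding cone. Second, the one step you assert rather than prove --- ``by the duality in G\aa rding's theory, $\overline{\Gamma_m(\omega)}$ is precisely the set of $B$ with $\tilde S_m(B,C_1,\dots,C_{m-1})\ge 0$ for all $C_i\in\Gamma_m(\omega)$'' --- is not literally G\aa rding's inequality (which gives only the inclusion $\overline{\Gamma_m}\subseteq\{\dots\}$) and is in fact the entire content of the ``only if'' direction. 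It does hold, e.g.\ because $\Gamma_m$ is a convex cone whose supporting hyperplanes at boundary points $\mu$ are the kernels of $\lambda\mapsto\langle\nabla S_m(\mu),\lambda\rangle=m\,\tilde S_m(\lambda,\mu,\dots,\mu)$ (they pass through the origin by Euler's identity, since $\langle\nabla S_m(\mu),\mu\rangle=mS_m(\mu)=0$ on $\partial\Gamma_m$), so any $\lambda\notin\overline{\Gamma_m}$ is separated by $\nabla S_m(\mu)$ for some $\mu\in\partial\Gamma_m$ and hence, by continuity, for some $\mu\in\Gamma_m$; a sentence to this effect would close the only real gap between your write-up and a self-contained proof.
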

By \cite[Section~4, Eq. (4.8)]{michelsohn82} given $\gamma_1, ...,\gamma_{m-1} \in \Gamma_m(\omega)$ we can find a Hermitian metric $\tilde\omega$ such that 
\[
	\tilde \omega^{n-1} = \gamma_1 \wedge \cdots \wedge \gamma_{m-1}\wedge \omega^{n-m}.
\]
Thus, according to Definition~\ref{non-smooth-omega-m}, checking the $(\omega,m)$-subharmonicity of a given function $u$ can be reduced to verifying that $u$ is $(\tilde \omega,1)$-subharmonic for  a collection of Hermitian metrics $\tilde \omega$. 
Therefore, some properties of $(\omega,1)$-subharmonic functions are preserved by  $(\omega,m)$-subharmonic functions. Below we  list several of them and refer to \cite{dinew-kolodziej14} and \cite{chinh13} for more (if the K\"ahler condition does not play a role).

\begin{prop} \label{basic-facts} 
Let $\Omega$ be a bounded open set in $\bC^n$. 
\begin{itemize}
\item[{\bf (a)}]
If $u,v \in SH_m(\omega)$, then $\max\{u,v\} \in SH_m(\omega)$. 
\item[{\bf(b)}]
Let $\{u_\alpha\}_{\alpha \in I} \subset SH_m(\omega)$ be a family locally uniformly bounded from above,  and $u:= \sup_\alpha u_\alpha$. Then, the upper semicontinuous regularization $u^*$ is $(\omega,m)$-subharmonic. 
\end{itemize}
\end{prop}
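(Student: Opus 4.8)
The plan is to reduce both statements to the corresponding facts for $(\tilde\omega,1)$-subharmonic functions, using the device recorded just before the proposition: given $\gamma_1,\dots,\gamma_{m-1}\in\Gamma_m(\omega)$, by \cite{michelsohn82} there is a Hermitian metric $\tilde\omega$ with $\tilde\omega^{n-1}=\gamma_1\wedge\cdots\wedge\gamma_{m-1}\wedge\omega^{n-m}$. Thus for any fixed such collection, testing $(\omega+dd^c u)\wedge\gamma_1\wedge\cdots\wedge\gamma_{m-1}\wedge\omega^{n-m}\ge 0$ is exactly testing that $u$ is $(\tilde\omega,1)$-subharmonic, i.e. $\tilde\omega$-plurisubharmonic in the classical sense (for $1$-forms the cone $\Gamma_1(\tilde\omega)$ condition is just positivity of the trace). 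So it suffices to know (a) and (b) for the class $SH_1(\tilde\omega)$, which is the standard Bedford--Taylor / potential-theoretic fact for $\omega$-sh functions, and then observe that the exceptional set / regularization produced does not depend on which $\tilde\omega$ (equivalently which $\gamma_j$) we chose.

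For part (a): let $u,v\in SH_m(\omega)$ and put $w=\max\{u,v\}$. Upper semicontinuity and $L^1_{loc}$ are immediate. Fix an arbitrary collection $\gamma_1,\dots,\gamma_{m-1}\in\Gamma_m(\omega)$ and the associated $\tilde\omega$. Both $u$ and $v$ are $(\tilde\omega,1)$-subharmonic, hence so is $\max\{u,v\}$ by the classical statement for $\omega$-subharmonic (i.e. $\omega$-psh) functions: $\tilde\omega+dd^c\max\{u,v\}\ge 0$ as a current. This means precisely $(\omega+dd^c w)\wedge\gamma_1\wedge\cdots\wedge\gamma_{m-1}\wedge\omega^{n-m}\ge 0$. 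Since the $\gamma_j$ were arbitrary, $w\in SH_m(\omega)$ by Definition~\ref{non-smooth-omega-m}. If one prefers to avoid invoking the $1$-sh maximum principle as a black box, the smooth case follows from $\Gamma_m$ being convex and the non-smooth case by the usual regularization-by-convolution argument applied to the metric $\tilde\omega$ (note $\tilde\omega$ is a genuine Hermitian metric so local convolutions of $\tilde\omega$-sh functions are $\tilde\omega_\varepsilon$-sh for nearby metrics, and one passes to the limit).

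For part (b): again fix a collection $\gamma_1,\dots,\gamma_{m-1}\in\Gamma_m(\omega)$ and the associated $\tilde\omega$. Each $u_\alpha$ is $(\tilde\omega,1)$-subharmonic and the family is locally uniformly bounded above, so by the classical Bedford--Taylor regularization theorem for $\omega$-subharmonic functions (the Hermitian version; see \cite{BT82} and e.g.\ \cite{KN1}), $u^*$ is $(\tilde\omega,1)$-subharmonic, i.e. $(\omega+dd^c u^*)\wedge\gamma_1\wedge\cdots\wedge\gamma_{m-1}\wedge\omega^{n-m}\ge 0$. The key point making this work is that $u^*$ is computed intrinsically from the family $\{u_\alpha\}$ and the topology of $\Omega$, with \emph{no reference to $\tilde\omega$}; therefore one and the same function $u^*$ satisfies the inequality for every admissible collection $\gamma_1,\dots,\gamma_{m-1}$. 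Hence $u^*\in SH_m(\omega)$. Also $u^*$ is u.s.c.\ and, being the u.s.c.\ regularization of a sup of $L^1_{loc}$ functions bounded above, lies in $L^1_{loc}(\Omega)$.

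The main obstacle is essentially bookkeeping rather than a deep difficulty: one must make sure the reduction to $(\tilde\omega,1)$-subharmonicity is legitimate for \emph{every} choice of test forms simultaneously, and that the resulting object ($\max\{u,v\}$ in (a), $u^*$ in (b)) is the same regardless of the choice — which it is, since both are defined without reference to any metric. A secondary, purely technical point is that $\tilde\omega$ is only Hermitian (not closed), so when invoking the regularization/convolution machinery one should cite the Hermitian version of the Bedford--Taylor theorem (as developed for $\omega$-sh functions on Hermitian manifolds) rather than the Kähler one; but this has already been carried out in \cite{KN1} and poses no new trouble here.
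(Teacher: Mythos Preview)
Your proof is correct and follows exactly the approach the paper outlines in the paragraph immediately preceding the proposition: reduce the defining inequality to the $(\tilde\omega,1)$-case via Michelsohn's formula $\tilde\omega^{n-1}=\gamma_1\wedge\cdots\wedge\gamma_{m-1}\wedge\omega^{n-m}$, invoke the classical facts there, and note that $\max\{u,v\}$ and $u^*$ are defined independently of the choice of $\gamma_j$. (One tiny imprecision, which the paper itself commits: the resulting condition is $(\omega+dd^c u)\wedge\tilde\omega^{n-1}\ge 0$, not literally $(\tilde\omega+dd^c u)\wedge\tilde\omega^{n-1}\ge 0$; but locally these differ by adding a fixed smooth potential---cf.\ the trick in the proof of Lemma~\ref{special-cp}---so (a) and (b) transfer unchanged.)
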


It follows from Remark~\ref{equi-two-defn} (see also \cite{blocki05}) that  for any collection of $C^2(\Omega)$ $(\omega,m)$-subharmonic functions $u_1, ..., u_k$ with $1 \leq k \leq m$,
\begin{equation} \label{n-m-exponent}
	\omega_{u_1} \wedge \cdots \wedge \omega_{u_k} \wedge \omega^{n-m}
\end{equation}
is a positive form. 

The above properties of $(\omega, m)$-subharmonic functions are the same 
as in the  K\"ahler case. However, there are differences too.  If we replace the exponent $n-m$ by a smaller one, then the positivity of the differential form \eqref{n-m-exponent} is no longer true in general. 
This makes computations involving integration by parts more tricky.

Let $u_1, ..., u_p \in SH_m(\omega) \cap C^2(\Omega)$.
If we write
\[  \omega_{u_{j_1}} \wedge \cdots \wedge \hat\omega_{u_j} \wedge \cdots \wedge\omega_{u_{j_q}}, \]
where $j_1 \leq j \leq j_q$, the symbol hat indicates that the term does not appear in the wedge product. 
Then, we have 
\begin{equation} \label{dw}
\begin{aligned}
	d( \omega_{u_1} \wedge \cdots \wedge \omega_{u_p} \wedge \omega^{n-m}) &	= \sum_{j=1}^p d\omega \wedge \omega_{u_{1}} \wedge \cdots \wedge \hat\omega_{u_j} \wedge \cdots \wedge\omega_{u_{p}} \wedge \omega^{n-m} \\
&	\quad + (n-m) d\omega \wedge \omega_{u_{1}} \wedge \cdots \wedge \omega_{u_p} \wedge \omega^{n-m-1}; \\
\end{aligned}
\end{equation}
and
\begin{equation}\label{dd^cw}
\begin{aligned}
&	dd^c(\omega_{u_1} \wedge \cdots \wedge \omega_{u_p} \wedge \omega^{n-m}) \\
&	= \sum_{1\leq j \leq p} dd^c\omega \wedge  \omega_{u_{1}} \wedge \cdots \wedge \hat\omega_{u_j} \wedge \cdots \wedge\omega_{u_{p}}  \wedge \omega^{n-m} \\
&\quad + \sum_{i \neq j; 1\leq i,j \leq p} d\omega \wedge d^c\omega \wedge    \omega_{u_{1}} \wedge \cdots  \wedge \hat\omega_{u_i}  \wedge \cdots \wedge \hat\omega_{u_j} \wedge \cdots \wedge\omega_{u_{p}}\wedge \omega^{n-m}  \\
&\quad + 2(n-m)\sum_{1\leq j \leq p} d\omega\wedge d^c\omega \wedge   \omega_{u_{1}} \wedge \cdots \wedge \hat\omega_{u_j} \wedge \cdots \wedge\omega_{u_{p}} \wedge \omega^{n-m-1} \\
&\quad + (n-m) dd^c\omega \wedge \omega_{u_{1}} \wedge \cdots \wedge \omega_{u_{p}} \wedge \omega^{n-m-1} \\
&\quad + (n-m)(n-m-1) d\omega \wedge d^c \omega \wedge \omega_{u_{1}} \wedge \cdots \wedge \omega_{u_{p}} \wedge \omega^{n-m-2}.
\end{aligned}
\end{equation}
In those formulas  forms of  three types   appear:
\begin{align*}
	\omega_{u_1} \wedge \cdots \wedge \hat \omega_{u_j} \wedge \omega_{u_{p}} \wedge \omega^{n-m-1}, \\
	\omega_{u_1} \wedge \cdots \wedge \omega_{u_{p}} \wedge \omega^{n-m-1}, \\
	\omega_{u_1} \wedge \cdots \wedge \omega_{u_{p}} \wedge \omega^{n-m-2}.
\end{align*}
As $\omega_{u_i}$ is not a  positive $(1,1)$-form, these forms are not necessary positive (the exponent of $\omega$ is less than $n-m$). Therefore, in the estimates that follow, we can not apply directly the bounds for $dd^c \omega$ or $d\omega \wedge d^c \omega$ in terms of  $\omega^2$ or $\omega^3$   as in the  case of the Monge-Amp\`ere equation. 
Fortunately, the results from  previous subsections make the important estimates to go through  if $p \leq m-1$ (see Corollary~\ref{poly-lem}) 

We are ready to prove the Chern-Levine-Nirenberg (CLN) inequality which guarantees the compactness of a  sequence of Hessian measures provided that $(\omega,m)$-subharmonic potentials are uniformly bounded.

\begin{prop}[CLN inequality] \label{cln-ineq-local} Let $K \subset \subset U \subset\subset \Omega$, where $K$ is compact and $U$ is open. Let $u_1,..., u_k \in SH_m(\omega)\cap C^2(\Omega)$, $1\leq k \leq m$. Then, there exists a constant $C_{K, U,\omega}>0$ such that
\begin{align*}
	\int_K \omega_{u_1} \wedge \cdots \wedge \omega_{u_k} \wedge \omega^{n-k} 
\leq C_{K,U,\omega}\left(1+ \sum_{j=1}^k \|u_j\|_{L^\infty(U)}\right)^k.
\end{align*}
\end{prop}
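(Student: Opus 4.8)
The plan is to argue by induction on $k$, the key point being that we may always pull one factor $\omega_{u_k}$ out of the integral, integrate by parts to move $dd^c$ from $u_k$ onto the remaining wedge product, and then control the resulting terms using the positivity of forms of the type \eqref{n-m-exponent} together with the estimates of Subsection~\ref{s12}. The base case $k=0$ is trivial (it is just $\int_K \omega^n \leq C_{K,U,\omega}$), so suppose the inequality holds for $k-1$ factors in place of $k$. Fix a cutoff function $\chi \in C_0^\infty(U)$ with $0 \leq \chi \leq 1$ and $\chi \equiv 1$ on a neighbourhood of $K$; writing $\omega_{u_k} = \omega + dd^c u_k$ we split
\[
	\int_K \omega_{u_1} \wedge \cdots \wedge \omega_{u_k} \wedge \omega^{n-k}
	\leq \int_U \chi\, \omega_{u_1} \wedge \cdots \wedge \omega_{u_{k-1}} \wedge \omega^{n-k+1}
	+ \int_U \chi\, dd^c u_k \wedge \omega_{u_1} \wedge \cdots \wedge \omega_{u_{k-1}} \wedge \omega^{n-k}.
\]
The first integral is handled directly by the inductive hypothesis (with a possibly larger intermediate open set). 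For the second, integrate by parts twice to put both derivatives on $\chi$:
\[
	\int_U \chi\, dd^c u_k \wedge \Theta = \int_U u_k\, dd^c\chi \wedge \Theta + (\text{boundary-free correction terms involving } d\chi, d\omega, dd^c\omega),
\]
where $\Theta := \omega_{u_1} \wedge \cdots \wedge \omega_{u_{k-1}} \wedge \omega^{n-k}$; note that here there is no boundary term since $\chi$ has compact support, but because $\omega$ is only Hermitian, $d\Theta$ and $dd^c\Theta$ do not vanish and one picks up terms governed by the expansions \eqref{dw} and \eqref{dd^cw}.

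The main work, and the step I expect to be the chief obstacle, is estimating those correction terms. All of them are wedge products of $\omega_{u_1}, \dots, \omega_{u_{k-1}}$ (at most $k-1 \leq m-1$ of them) with $d\chi$, $d^c\chi$, $d\omega$, $d^c\omega$, $dd^c\omega$ and powers of $\omega$ whose exponent may be $n-k$, $n-k-1$ or $n-k-2$ — so the forms in question are precisely the not-necessarily-positive ones discussed after \eqref{dd^cw}. This is exactly where the Hermitian (non-Kähler) case departs from the Monge-Ampère argument: one cannot bound $d\omega\wedge d^c\omega$ by $C\omega^2$ directly against such a wedge product. The remedy is Corollary~\ref{poly-lem} (applicable since we have at most $m-1$ factors $\omega_{u_j}$, hence $k-1 \leq m-2$ after one factor has been peeled, or a direct appeal to Lemma~\ref{form-s-m-1} when only one $\omega_{u_j}$ is present): treating the fixed smooth form built from $d\chi, d\omega, dd^c\omega$ and the appropriate power of $\omega$ as the ``$T$'' in that corollary, each such term is bounded in absolute value by $C_{K,U,\omega}\,(\omega_{u_1}+\cdots+\omega_{u_{k-1}})^{k-1}\wedge\omega^{n-k+1}$ (up to relabelling exponents), i.e.\ by a sum of genuinely positive forms of the type \eqref{n-m-exponent}.

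Having reduced everything to integrals of the form $\int_U \tilde\chi\, \omega_{u_{i_1}} \wedge \cdots \wedge \omega_{u_{i_{k-1}}} \wedge \omega^{n-k+1}$ (with $\tilde\chi$ a fixed cutoff, possibly absorbing $|u_k|$ which is bounded by $\|u_k\|_{L^\infty(U)}$ into the constant), and expanding the $(k-1)$-st power to get wedge products of at most $k-1$ of the $\omega_{u_j}$'s with $\omega$ to the power $n-(k-1)$, the inductive hypothesis applies to each such term and yields a bound by $C_{K,U,\omega}(1+\sum_{j=1}^{k-1}\|u_j\|_{L^\infty(U)})^{k-1}$. Multiplying by the factor $\|u_k\|_{L^\infty(U)}$ (or $1$) that was absorbed, and summing the finitely many terms, gives the asserted bound $C_{K,U,\omega}(1+\sum_{j=1}^{k}\|u_j\|_{L^\infty(U)})^{k}$. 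One should also remark that although Corollary~\ref{poly-lem} as stated needs the $\gamma_j$ in the open cone $\Gamma_m(\omega)$, by replacing each $u_j$ with $u_j + \epsilon|z|^2$ (or by a standard approximation) one reduces the closed-cone case to the open one and then lets $\epsilon \to 0$ using the continuity of the left-hand side in this estimate under such perturbations; this technical point is minor compared with the bookkeeping of the correction terms.
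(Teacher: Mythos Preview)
Your approach is correct and arrives at the same conclusion, but it differs from the paper's in one substantive organisational choice. The paper first invokes \eqref{incre-s-m-form} to bound
\[
\omega_{u_1}\wedge\cdots\wedge\omega_{u_k}\wedge\omega^{n-k}\leq k^k\,\omega_u^k\wedge\omega^{n-k},\qquad u:=\tfrac{1}{k}(u_1+\cdots+u_k),
\]
thereby reducing immediately to the case of a \emph{single} function, and then runs the induction on $\int_K\omega_u^k\wedge\omega^{n-k}$. The payoff is that after integration by parts every correction term is a wedge power of the single form $\omega_u$, so only Lemma~\ref{form-s-m-1} (the one-form estimate) is needed, not Corollary~\ref{poly-lem}. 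Your route, keeping the distinct $\omega_{u_j}$'s throughout and appealing to Corollary~\ref{poly-lem} for the mixed correction terms, is equally valid; it trades a slightly heavier tool for avoiding the preliminary reduction. One small slip: you write ``$k-1\leq m-2$ after one factor has been peeled,'' but what is needed (and what holds, since $k\leq m$) is $k-1\leq m-1$, matching the range of Corollary~\ref{poly-lem} (which handles up to $m-1$ factors). With that corrected, your argument goes through.
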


\begin{proof} 
Observe that by \eqref{incre-s-m-form}
\[
	\omega_{u_1} \wedge \cdots \wedge \omega_{u_k} \wedge \omega^{n-k} \leq k^k \left(\omega + dd^c \frac{u_1+ \cdots + u_k}{k}\right)^k\wedge \omega^{n-k}.
\]
Set $u:= (u_1+\cdots + u_k)/k$. Thus we are reduced  to estimate
$
	\int_K \omega_u^k \wedge \omega^{n-k},
$ 
where $\omega_u \in \Gamma_m(\omega)$. 

We will prove it by induction in $k$. 
For $k=1$, let $\chi$ be a cut-off function such that $\chi =1$ on $K$ and $supp\, \chi \subset \subset U$. Then, 
\[
	\int_K\omega_u \wedge \omega^{n-1}
\leq  \int \chi \omega_u \wedge \omega^{n-1}  
= 	  \int \chi \omega^{n}  
	+ \int \chi dd^c u \wedge \omega^{n-1}.  
\]
It is clear that $\int \chi \omega^{n}   \leq C_{K,U,\omega}$ and by integration by parts we have
\[
	\int \chi dd^c u \wedge \omega^{n-1}
=	\int u dd^c (\chi \omega^{n-1}) 
\leq 	C_{K,U,\omega} \|u\|_{L^\infty(U)}.
\]
Thus, the CLN inequality holds for $k=1$. 
Suppose now that 
\[
	\int_K \omega_{u}^{k-1} 
\wedge  \omega^{n-k+1} \leq C_{K,U, \omega} (1+ \|u\|_{L^\infty(U)})^{k-1}.
\]
We need to infer the inequality
\[
	\int_K \omega_{u}^{k} \wedge  \omega^{n-k} 
	\leq C_{K,U, \omega} (1+ \|u\|_{L^\infty(U)})^{k}.
\]
Indeed, as
\begin{align*}
	\int_K \omega_{u}^{k} \wedge  \omega^{n-k}  
&\leq \int \chi \omega_{u}^{k} \wedge  \omega^{n-k}  \\
&=	\int \chi \omega_u^{k-1}\wedge \omega^{n-k+1} + 
	\int \chi dd^c u \wedge \omega_u^{k-1} \wedge \omega^{n-k},
\end{align*}
using the induction hypothesis it is enough to estimate the second term on the right hand side. The integration by parts gives 
\[
	\int \chi dd^c u \wedge \omega_u^{k-1} \wedge \omega^{n-k}
=	\int u dd^c (\omega_u^{k-1} \wedge \chi \omega^{n-k}).
\]
An elementary computation yields
\begin{align*}
	dd^c (\omega_u^{k-1} \wedge \chi \omega^{n-k}) 
&=	(k-1)(k-2) \omega_u^{k-3} \wedge d\omega \wedge d^c \omega \wedge \chi \omega^{n-k} \\
&\quad + (k-1) \omega_u^{k-2} \wedge dd^c \omega \wedge \chi \omega^{n-k} \\
&\quad - (k-1) \omega_u^{k-2} \wedge d^c\omega \wedge d (\chi \omega^{n-k}) \\
&\quad + (k-1) \omega_u^{k-2} \wedge d \omega \wedge d^c(\chi \omega^{n-k}) \\
&\quad + \omega_u^{k-1} \wedge dd^c (\chi \omega^{n-k}).
\end{align*}
Since $k\leq m$, applying  Lemma~\ref{form-s-m-1} for $\gamma= \omega_u$, we get that
\begin{align*}
&	\left| u dd^c (\omega_u^{k-1} \wedge \chi \omega^{n-k})\right| \\
&\leq 	C_{K,U, \omega} \|u\|_{L^\infty(U)} \left(\omega_u^{k-1} \wedge\omega^{n-k+1} + \omega_u^{k-2} \wedge \omega^{n-k+2} + \omega_u^{k-3} \wedge \omega^{n-k+3}\right).
\end{align*}
This implies that $\left| \int \chi dd^c u \wedge \omega_u^{k-1} \wedge \omega^{n-k} \right|$ is bounded by
\begin{align*}
& C_{K,U, \omega} \|u\|_{L^\infty(U)} \int_{supp\, \chi} \left(\omega_u^{k-1} \wedge\omega^{n-k+1} + \omega_u^{k-2} \wedge \omega^{n-k+2} + \omega_u^{k-3} \wedge \omega^{n-k+3}\right).
\end{align*}
Combined with the induction hypothesis this finishes the proof.
\end{proof}


For general $1<m<n$ and Hermitian metrics $\omega$, it is  not known yet that any $(\omega,m)$-subharmonic function is approximable by a decreasing sequence of smooth $(\omega,m)$-subharmonic functions. Therefore we need the following definition.

\begin{defn}[smoothly approximable functions] Let $u$ be an $(\omega,m)$-subharmonic function in $\Omega$.  We say that $u$ belongs to $\cA_{m}(\omega)$ if at each point $z \in \Omega$ there exists a ball $B(z, r) \subset\subset \Omega$, and smooth $(\omega,m)$-subharmonic functions $u_j$ in $B(z,r)$ decreasing to $u$ as $j$ goes to $\infty$. 
\end{defn}

Now, we shall develop "pluripotential theory" for $(\omega,m)$-subharmonic functions in the class $\cA_m(\omega)$.

\begin{prop}[wedge product] \label{wedge-prod} 
Fix a  ball $B(z_0,r)\subset \subset \Omega$.
Let $u_1,...,u_k \in SH_m(\omega) \cap C(\bar{B}(z_0,r))$, $1\leq k \leq m$. Assume that there exists a sequence of smooth $(\omega,m)$-subharmonic functions $u_1^j, ..., u_k^j$ decreasing to $u_1,...,u_k$ in $\bar B(z_0,r)$, respectively, then the sequence
\[
	(\omega +dd^c u_1^j) \wedge \dots \wedge (\omega+dd^c u_k^j)  \wedge \omega^{n-m} 
\]
converges weakly to a unique positive current, in $B(z_0,r)$, as $j$ goes to $+\infty$. 
\end{prop}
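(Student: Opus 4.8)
The plan is to follow the classical Bedford--Taylor scheme, adapted to the Hermitian, non-closed setting with the aid of Corollary~\ref{poly-lem}. The proof proceeds by induction on $k$. For $k=1$ the forms $\omega + dd^c u_1^j$ converge weakly to $\omega + dd^c u_1$ simply because $u_1^j \to u_1$ in $L^1_{loc}$ (indeed uniformly, by Dini's theorem), and $dd^c$ is continuous for the weak topology of currents; positivity of the limit is immediate since each $\omega + dd^c u_1^j \in \Gamma_m(\omega)$ and $\wedge\, \omega^{n-m}$ preserves weak limits. Assume now that $\mu_j := \omega_{u_1^j} \wedge \cdots \wedge \omega_{u_{k-1}^j} \wedge \omega^{n-m}$ converges weakly to a positive current $\mu$, independent of the approximating sequences. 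By the CLN inequality (Proposition~\ref{cln-ineq-local}) the masses of $\mu_j$ on any $K \subset\subset B(z_0,r)$ are uniformly bounded, so the family $\{\, \omega_{u_k^j} \wedge \mu_j \,\}$ has locally uniformly bounded mass; hence it is weakly precompact. It remains to identify the limit and show it does not depend on the choices.

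The heart of the argument is the standard two-step "diagonal" reduction: first one shows that for fixed $i$,
\[
	\omega_{u_k^i} \wedge \mu_j \longrightarrow \omega_{u_k^i} \wedge \mu
	\quad \text{weakly as } j \to \infty,
\]
because $u_k^i$ is smooth, so wedging with the fixed smooth form $\omega_{u_k^i} = \omega + dd^c u_k^i$ (or, after expanding, integrating against test forms and moving $dd^c$ onto them via \eqref{dd^cw}) commutes with the weak limit in $j$; here one uses that the auxiliary forms produced when differentiating $\chi\omega^{n-m}$ and the lower powers of $\omega$ are controlled — via Lemma~\ref{form-s-m-1} and Corollary~\ref{poly-lem} applied with $p = k-1 \leq m-1$ — by the positive currents $\omega_{u_1}\wedge\cdots\wedge\omega^{n-k+s}$ whose masses are already bounded by CLN. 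Second, one lets $i \to \infty$: since $u_k^i \downarrow u_k$ uniformly on $\bar B(z_0,r)$, integration by parts transfers $dd^c$ off $u_k^i$ onto a fixed test form wedged with $\mu$, and one passes to the limit using the uniform convergence $u_k^i \to u_k$ together with the finite total mass of $\mu$ and of the auxiliary currents arising from $d\omega$, $dd^c\omega$, $d\omega\wedge d^c\omega$. A Cantor diagonal argument over an exhaustion of $B(z_0,r)$ by compacts, combined with the independence of $\mu$ from the sequences (the inductive hypothesis), then yields a well-defined limit current $\omega_{u_1}\wedge\cdots\wedge\omega_{u_k}\wedge\omega^{n-m}$ that does not depend on the approximants; positivity follows from the positivity of each $\omega_{u_1^j}\wedge\cdots\wedge\omega_{u_k^j}\wedge\omega^{n-m}$, which holds by Remark~\ref{equi-two-defn} since $k \leq m$, and is preserved under weak convergence.

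To make the first step precise one tests against a smooth compactly supported form $\eta$ and writes
\[
	\int \eta \wedge \omega_{u_k^i}\wedge \mu_j
	= \int \eta \wedge \omega \wedge \mu_j + \int (dd^c u_k^i) \wedge \eta \wedge \mu_j,
\]
integrating by parts in the second term to move $dd^c$ onto $\eta$ (picking up, because $\mu_j$ is not closed in the Hermitian setting, the extra terms involving $d\omega$ and $dd^c\omega$ displayed in \eqref{dw}--\eqref{dd^cw}); every resulting integrand is a \emph{continuous} coefficient times a component of $\mu_j$ or of the lower-exponent currents $\omega_{u_1^j}\wedge\cdots\wedge\omega^{n-m-1}$ etc., and Corollary~\ref{poly-lem} (with $T$ built from $\eta$, $d\omega$, $d^c\omega$, or their products, and $p = k-1$) bounds the latter, in absolute value, by $\big(\omega_{u_1^j}+\cdots+\omega_{u_{k-1}^j}\big)^{k-1}\wedge\omega^{n-k+1}$ up to a uniform constant, a current with CLN-bounded mass. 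Hence one may pass to the limit $j\to\infty$ using only the weak convergence $\mu_j\to\mu$ of the top-dimensional pieces together with dominated convergence for the bounded continuous coefficients.

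I expect the main obstacle to be precisely this bookkeeping of the non-closed correction terms: unlike the Kähler case, $d\omega\neq 0$ forces the exponent of $\omega$ to drop below $n-m$ in \eqref{dw}--\eqref{dd^cw}, so the intermediate forms are \emph{not} positive and cannot be estimated by brute comparison with powers of $\omega$. The resolution — and the reason the hypothesis $k\leq m$ (hence $p\leq m-1$ at each differentiation) is essential — is to invoke Corollary~\ref{poly-lem} to reexpress each such term as a bounded multiple of a genuinely positive current of the form $(\text{sum of }\omega_{u_\ell})^{\,p+1}\wedge\omega^{n-p-1}$, whose mass is then controlled by the CLN inequality uniformly in $j$. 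Once this domination is in place, both limit passages ($j\to\infty$ with $i$ fixed, then $i\to\infty$) are routine applications of weak convergence plus uniform (Dini) convergence of the potentials, and uniqueness of the limit follows by the usual interlacing/diagonal argument comparing two approximating sequences through their maxima, using part~(a) of Proposition~\ref{basic-facts}.
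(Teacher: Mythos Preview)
Your approach is workable and uses the same essential ingredients as the paper (the CLN inequality, Corollary~\ref{poly-lem} to control the non-positive lower-exponent forms, and Dini's theorem for uniform convergence), but the organization is different and more elaborate than what the paper does.

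The paper does \emph{not} proceed by induction on $k$. Instead it argues directly: CLN gives uniform mass bounds, hence weak precompactness of the sequence $\omega_{u_1^j}\wedge\cdots\wedge\omega_{u_k^j}\wedge\omega^{n-m}$; to prove uniqueness of the limit one takes two approximating sequences $v_j,w_j$ (both smooth, both decreasing to the same continuous limit, hence $\|v_j-w_j\|_\infty\to 0$), telescopes the difference as $dd^c(v_j-w_j)\wedge T_j$ with $T_j=\sum_s\omega_{v_j}^s\wedge\omega_{w_j}^{m-1-s}\wedge\omega^{n-m}$, integrates by parts once to obtain $\int(v_j-w_j)\,dd^c(\chi T_j)$, and bounds $\int\|dd^c(\chi T_j)\|$ via Corollary~\ref{poly-lem} and CLN. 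This is shorter: there is no two-step diagonal, no need to make sense of $dd^c(\eta\wedge\mu)$ for the \emph{limit} current $\mu$, and no interlacing through maxima.

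Two specific points where your write-up is looser than it should be. First, your ``first step'' is overcomplicated: for fixed smooth $u_k^i$, the form $\omega_{u_k^i}$ is smooth, so $\omega_{u_k^i}\wedge\mu_j\to\omega_{u_k^i}\wedge\mu$ is immediate from the weak convergence of $\mu_j$ with no integration by parts. Second, in your ``second step'' you integrate by parts against the limit current $\mu$ and then want to pass $u_k^i\to u_k$ using uniform convergence; this requires $dd^c(\eta\wedge\mu)$ to be of order zero. That is true, but not for free: it follows because the smooth forms $dd^c(\eta\wedge\mu_j)$ have uniformly bounded total variation (this is exactly the CLN + Corollary~\ref{poly-lem} bound you invoke at the level of $\mu_j$), so their weak limit is a signed measure. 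You should say this explicitly rather than jumping to ``finite total mass of the auxiliary currents,'' since those auxiliary currents (with exponent of $\omega$ below $n-m$) are not positive and are not part of your inductive hypothesis. Finally, the reference to comparing sequences ``through their maxima'' via Proposition~\ref{basic-facts}(a) is unnecessary here; the uniform convergence already gives $\|v_j-w_j\|_\infty\to 0$, which is all one needs.
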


\begin{proof}
Thanks to Corollary~\ref{poly-lem} and the CLN inequality (Proposition~\ref{cln-ineq-local}), 
the proof is a standard modification of the Bedford and Taylor convergence theorem \cite{BT76, BT82}.
For notational simplicity we only give it in the case $k=m$, $u_1 = ...=u_{m}=u$ and $ u_1^j \equiv ... \equiv u_{m}^j \equiv u^j =:u_j$. The general case  follows by the same method.  Set $B:= B(z_0,r)$. Since $u$ is continuous on $\overline{B}$, it follows that $u_j \to u$ uniformly on that set. Hence, $\|u_j\|_{\infty}$ is uniformly bounded, where we denote here and below \[\|.\|_\infty := \sup_{\bar B}|.|.\] 
For any compact set $K \subset B$ we have
\[
	\int_K \omega_{u_j}^m \wedge \omega^{n-m} 
	\leq C_{K,B,\omega} (1+ \|u_j\|_\infty)^{m} 
\]
by the CLN inequality (Proposition~\ref{cln-ineq-local}). Therefore, the sequence 
\[
\omega_{u_j}^{m}  \wedge \omega^{n-m}, \quad j\geq 1, 
\]
is weakly compact in $B$. It implies that there exists a weak limit $\mu$ upon passing to a subsequence. 

It remains to check that every weak limit is equal to $\mu$. Suppose that $\{v_j\}_{j=1}^\infty$ and $\{w_j\}_{j=1}^\infty$ are two decreasing sequences of smooth $(\omega,m)$-subharmonic functions  converging to $u$. Since the statement is local we may assume that all functions are equal near the boundary of $B$ (see \cite{BT82, kolodziej05}). We need to show that for any test function $\chi \in C^\infty_c(B)$,
\[
	\left| \int_B \chi \omega_{v_j}^{m}  \wedge\omega^{n-m} - \int_B \chi \omega_{w_j}^{m}\wedge \omega^{n-m}\right| \longrightarrow 0
\]
as $j \to +\infty.$ Since $u$ is continuous on $\overline{B}$, it follows that both $\{v_j\}$ and $\{w_j\}$ converge uniformly to $u$ on that set. Hence, $\|v_j\|_\infty$, $\|w_j\|_\infty$ are uniformly bounded. By integration by parts we have
\[
	A_j:=\int_B \chi dd^c (v_j - w_j) \wedge T_j = \int_{B} (v_j - w_j) dd^c (\chi T_j) ,
\]
where $T_j = \sum_{s=0}^{m-1} \omega_{v_j}^s  \wedge \omega_{w_j}^{m-1-s} \wedge \omega^{n-m}$.  From Corollary~\ref{poly-lem} and  the above  proof of the CLN inequality we get that
\[
	A_j 	
	 \leq \|v_j - w_j\|_\infty  \int_{supp \; \chi} \|dd^c(\chi T_j)\|, 
\]	
where the last integral is controlled by \[C (1+ \|v_j\|_\infty)^{m-1} (1+ \|w_j\|_\infty)^{m-1}.\] Therefore,  we can conclude that  $ \lim_{j \to +\infty} A_j = 0$, and thus the result follows.
\end{proof}

\begin{cor} \label{we-pro-1}
Let $u_1,...,u_k \in \cA_m(\omega) \cap C(\Omega)$, $1\leq k \leq m$. Then, the wedge product
\[\omega_{u_1} \wedge \cdots \wedge \omega_{u_k} \wedge \omega^{n-m}\]
is a well-defined positive current of bidegree $(n-m+k, n-m+k)$. In particular, for $u\in \cA_m(\omega) \cap C(\Omega)$, the current
\[
	\omega_u^m \wedge \omega^{n-m}
\]
is the complex Hessian operator of $u$, which is a positive Radon measure in $\Omega$.
\end{cor}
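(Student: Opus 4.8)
The plan is to upgrade the local convergence statement of Proposition~\ref{wedge-prod} to a global definition on $\Omega$ by a gluing/consistency argument, using only that each $u_i \in \cA_m(\omega)$ admits, near every point, a decreasing sequence of smooth $(\omega,m)$-subharmonic approximants. First I would fix an arbitrary point $z_0 \in \Omega$ and, using the definition of $\cA_m(\omega)$, choose for each $i = 1,\dots,k$ a ball $B_i = B(z_0, r_i)$ on which there is a sequence $u_i^j \searrow u_i$ of smooth functions in $\Gamma_m(\omega)$-class; intersecting, we get a common ball $B = B(z_0, r) \subset\subset \Omega$ on which all $k$ decreasing sequences are defined simultaneously. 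Since the $u_i$ are continuous, the convergence $u_i^j \to u_i$ is uniform on $\bar B$ (Dini), so Proposition~\ref{wedge-prod} applies and gives that
\[
	\omega_{u_1^j} \wedge \cdots \wedge \omega_{u_k^j} \wedge \omega^{n-m}
\]
converges weakly on $B$ to a positive current, independent of the chosen approximating sequences. Positivity of the limit follows because each term in the sequence is a positive $(n-m+k,n-m+k)$-form by \eqref{n-m-exponent} (Remark~\ref{equi-two-defn}), and weak limits of positive currents are positive. This defines a positive current $\omega_{u_1} \wedge \cdots \wedge \omega_{u_k} \wedge \omega^{n-m}$ on $B$.

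Next I would check that this local definition is independent of the choice of ball, hence patches to a global current on $\Omega$. If $B$ and $B'$ are two such balls with $B \cap B' \neq \emptyset$, pick a point in the intersection and a still smaller ball $B'' \subset B \cap B'$ on which a common decreasing approximation is available; the uniqueness part of Proposition~\ref{wedge-prod} forces the currents defined via $B$ and via $B'$ to agree on $B''$, and since such $B''$ cover $B \cap B'$, they agree on the whole overlap. By the sheaf property of currents this produces a well-defined global positive current on $\Omega$. For the last assertion, take $k = m$ and $u_1 = \cdots = u_m = u$: the construction yields the positive current $\omega_u^m \wedge \omega^{n-m}$, which is then a closed positive current of bidimension $0$ — equivalently a positive Radon measure — since a positive current of top bidegree $(n,n)$ is automatically a positive measure. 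That this deserves to be called the complex Hessian operator of $u$ is justified by the fact that on smooth functions it reduces to $S_m(\omega_u)\,\omega^n$ up to a combinatorial constant, via the correspondence with $\Gamma_m$ recorded in Subsection~\ref{s12}.

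The only real point requiring care — and the step I expect to be the mild obstacle — is the consistency across overlapping balls, because a priori the two decreasing sequences on $B$ and $B'$ have nothing to do with each other. This is resolved exactly by the uniqueness clause in Proposition~\ref{wedge-prod}, which already handles the comparison of two \emph{arbitrary} decreasing approximants of the same continuous limit via the integration-by-parts estimate controlled by Corollary~\ref{poly-lem} and the CLN inequality; one just has to observe that the argument there is purely local (all functions can be taken equal near $\partial B''$), so it applies verbatim on the shrunk ball. Everything else — uniform convergence from Dini's theorem, preservation of positivity under weak limits, the sheaf-gluing of currents, and the identification of a top-degree positive current with a Radon measure — is standard and needs no new input.
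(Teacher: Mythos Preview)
Your proposal is correct and follows exactly the approach the paper intends: the corollary is stated without proof, as an immediate consequence of Proposition~\ref{wedge-prod}, and the gluing argument you outline (local definition via decreasing smooth approximants, uniqueness on overlaps from the uniqueness clause of Proposition~\ref{wedge-prod}, sheaf property of currents) is precisely the standard filling-in that the paper leaves to the reader. The one redundant word is ``closed'' in your final paragraph --- positivity of a top-degree current already makes it a positive Radon measure, and closedness is automatic for degree reasons anyway --- but this does no harm.
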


\subsection{The comparison principle and maximality}

Let $\Omega$ be a bounded open set in $\bC^n$. 
Given $\omega$ a Hermitian metric there exists  a constant $B_\omega>0$, which we fix,  satisfying in $\bar{\Omega}$ 
\begin{equation} \label{tor-cst}
	-B_\omega \omega^2 \leq 2n dd^c \omega \leq B_\omega \omega^2, \quad
	-B_\omega \omega^3 \leq 4n^2 d\omega \wedge d^c \omega \leq B_\omega \omega^3.
\end{equation}

Thanks to Lemma~\ref{form-s-m-1} and Corollary~\ref{poly-lem}, the proof of \cite[Theorem 0.2]{KN1} can be adapted to Hessian operators and as a consequence we get the following domination principle.

\begin{prop}\label{domination-prin}
Let $u, v \in \cA_m(\omega) \cap C(\bar\Omega)$ be such that $u\geq v$ on $\d \Omega$. Assume that $(\omega + dd^c u)^m \wedge \omega^{n-m} \leq (\omega+ dd^cv)^m\wedge \omega^{n-m}$. Then $u\geq v$ on $\bar \Omega$.
\end{prop}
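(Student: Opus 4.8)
The plan is to follow the standard pluripotential-theoretic argument for a domination (comparison) principle on a bounded domain, in the form adapted from \cite[Theorem 0.2]{KN1}, keeping careful track of the torsion terms that arise because $\omega$ is only Hermitian. First I would argue by contradiction: suppose the set $\{v > u\}$ is nonempty and has positive Lebesgue measure (if it were a null set, one can deduce $u \geq v$ everywhere by upper semicontinuity after a small perturbation). Because $u \geq v$ on $\partial\Omega$ and both functions are continuous on $\bar\Omega$, for small $\vepsilon > 0$ the open set $\Omega_\vepsilon := \{v > u + \vepsilon\} \Subset \Omega$ is relatively compact and nonempty, and on its boundary $u + \vepsilon = v$. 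The idea is to compare the total Hessian masses $\int_{\Omega_\vepsilon} \omega_u^m \wedge \omega^{n-m}$ and $\int_{\Omega_\vepsilon} \omega_v^m \wedge \omega^{n-m}$ over this set and derive a contradiction with the assumed inequality $\omega_u^m \wedge \omega^{n-m} \leq \omega_v^m \wedge \omega^{n-m}$ between the measures.

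The core computation is a Stokes/integration-by-parts estimate, run on smooth decreasing approximants $u_j \downarrow u$, $v_j \downarrow v$ (available since $u, v \in \cA_m(\omega)$), then passed to the limit using Proposition~\ref{wedge-prod}. One writes the difference $\omega_v^m \wedge \omega^{n-m} - \omega_u^m \wedge \omega^{n-m}$ as $dd^c(v - u) \wedge T$ plus torsion corrections, where $T = \sum_{s=0}^{m-1} \omega_v^s \wedge \omega_u^{m-1-s} \wedge \omega^{n-m}$. Integrating over $\Omega_\vepsilon$ against the cutoff given by $\max\{v - u - \vepsilon, 0\}$ (which vanishes on $\partial\Omega_\vepsilon$ and is $(\omega, m)$-subharmonic-compatible in the relevant products), integration by parts moves $dd^c$ onto this cutoff; its $dd^c$ contributes a term of the form $\int_{\Omega_\vepsilon} \omega_v \wedge T' - \omega_u \wedge T'$ that, by the increasing property \eqref{incre-s-m-form}, controls $\int_{\Omega_\vepsilon}(\omega_v^m - \omega_u^m)\wedge\omega^{n-m}$ from one side, while the torsion terms involving $d\omega$, $d\omega \wedge d^c\omega$ and $dd^c\omega$ are bounded via \eqref{tor-cst} together with Lemma~\ref{form-s-m-1} and Corollary~\ref{poly-lem} — these are exactly the tools that let one dominate products like $\omega_u^{k-1}\wedge\omega^{n-k}$ even though $\omega_u$ is not a positive form. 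Combining the hypothesis on the measures with these bounds, one shows that the mass of $(\omega_v - \omega_u)$-type currents over $\{v > u + \vepsilon\}$ must be zero, which, since $\omega_v - \omega_u = dd^c(v-u)$ and $v - u$ is non-constant there, forces the set to be empty — the contradiction.

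The main obstacle is the honest bookkeeping of the Hermitian torsion terms in the integration by parts: unlike the Kähler case, $d\omega \neq 0$, so each application of Stokes produces extra terms of the three types displayed after \eqref{dd^cw}, where the exponent of $\omega$ drops below $n-m$ and the intermediate forms $\omega_u^s \wedge \omega^{n-m-1}$ etc. are no longer positive. The resolution is precisely Corollary~\ref{poly-lem} (valid since the number of $\omega_u$-factors stays $\leq m-1$ in the problematic terms), which converts $|\gamma_0 \wedge\cdots\wedge\gamma_k \wedge T/\omega^n|$ into a genuine positive quantity $(\gamma_0+\cdots+\gamma_k)^{k+1}\wedge\omega^{n-k-1}/\omega^n$ times a constant; feeding in \eqref{tor-cst} then yields the required uniform estimates, and uniform boundedness of $\|u_j\|_\infty, \|v_j\|_\infty$ on $\bar\Omega$ plus the CLN inequality (Proposition~\ref{cln-ineq-local}) controls the total masses so that the limit $j \to \infty$ can be taken. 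Once these estimates are in place, the contradiction argument closes exactly as in \cite{KN1}.
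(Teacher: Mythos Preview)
Your approach is essentially the paper's: the paper simply cites \cite[Corollary~3.4]{KN1} (derived from \cite[Theorem~0.2]{KN1}), noting that Lemma~\ref{form-s-m-1} and Corollary~\ref{poly-lem} are precisely what make the Hermitian torsion terms controllable in the Hessian setting, and adds the remark that for $u,v\in C^2$ the statement is just the classical maximum principle via ellipticity of the Hessian operator \cite[Lemma~B]{CNS85}. Your identification of the key tools (smooth approximation from $\cA_m(\omega)$, CLN, torsion control via Corollary~\ref{poly-lem}) is correct.

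One genuine weak point: your closing step, ``the mass of $(\omega_v-\omega_u)$-type currents over $\{v>u+\vepsilon\}$ must be zero, which \ldots forces the set to be empty,'' is not a valid implication as written. Vanishing of $dd^c(v-u)$ wedged against some positive current does not by itself force $\{v>u+\vepsilon\}=\emptyset$. The mechanism that actually closes the argument (visible in the paper's Lemma~\ref{special-cp} and Theorem~\ref{maximality}, which reflect the \cite{KN1} technique) is an \emph{iteration in the exponent}: from the weak comparison inequality one deduces $\int_{U(\vepsilon,t)}\omega_u^{k}\wedge\omega^{n-k}$ is controlled by $\int_{U(\vepsilon,t)}\omega_u^{k+1}\wedge\omega^{n-k-1}$ (plus torsion), and stepping $k$ down from $m$ to $0$ eventually yields $\int_{U}\omega^n=0$ for a nonempty open $U$, which is the contradiction. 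You should replace your final sentence with this descent.
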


\begin{proof} See \cite[Corollary 3.4]{KN1}. We remark here that if $u, v$ belong to $C^2(\Omega)$, then the corollary can be proven simply by using the ellipticity of the Hessian operator \cite[Lemma B]{CNS85}.
\end{proof}

The above proposition shows that if $u \in \cA_m(\omega) \cap C(\bar \Omega)$ and $\omega_u^m \wedge \omega^{n-m} =0$, then it is maximal in $\cA_m(\omega) \cap C(\bar \Omega)$. We shall see that a stronger result is true. 
First, we recall a couple of facts from classical potential  theory. For a general fixed Hermitian metric $\gamma$ in $\bC^n$ and  a Borel set $E \subset \Omega$ we define
\[
	C_{\gamma}(E) = \sup\left\{\int_E dd^c w \wedge \gamma^{n-1} : w \mbox{ is } \gamma-\mbox{subharmonic in }\Omega, \;0\leq w \leq 1\right\}.
\]
\begin{prop} \label{quasi-con}
Every $\gamma-$subharmonic function $u$ is quasi-continuous with respect to the capacity $C_{\gamma}$, i.e. for any $\vepsilon>0$, there exists an open set $U \subset \Omega$ such that $C_{\gamma}(U)<\vepsilon$ and $u$ restricted to $\Omega\setminus U$ is continuous.
\end{prop}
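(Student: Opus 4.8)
Since quasi-continuity is a local notion, I would fix a ball $B\subset\subset\Omega$ and prove the statement on $B$, and exploit the fact that for a \emph{fixed} Hermitian metric $\gamma$ the class of $\gamma$-subharmonic functions is governed by the \emph{linear} operator $Lv:=dd^cv\wedge\gamma^{n-1}/\gamma^n$, which is second order, has smooth coefficients and is uniformly elliptic on $\bar\Omega$ (its symbol is the positive definite Hermitian matrix attached to $\gamma^{n-1}$); a function $u$ is $\gamma$-subharmonic exactly when $Lu\ge0$ in the sense of distributions, and the capacity $C_\gamma$ only involves the single wedge product $dd^cw\wedge\gamma^{n-1}\ge0$, so none of the cone subtleties of Section~\ref{S1} intervene. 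On $B$ one has the Riesz--Green representation $u=h+G_B\mu_u$, where $\mu_u:=dd^cu\wedge\gamma^{n-1}\ge0$ is the (finite) Riesz measure, $G_B$ the Green kernel of $L$ on $B$, and $h$ is $L$-harmonic, hence smooth by elliptic regularity and in particular continuous; as adding a continuous function does not affect quasi-continuity, it suffices to treat $G_B\mu$ for a finite positive measure $\mu$. Next I would construct a decreasing sequence of \emph{continuous} $\gamma$-subharmonic functions $u_j\downarrow u$ on a slightly smaller ball $B'\subset\subset B$; since $\gamma$-subharmonicity is not preserved by mollification when $\gamma$ is not flat, this is done not by convolution but by regularizing the Riesz measure $\mu_u$ (or, equivalently, by solving the linear Dirichlet problem $Lu_j=0$ on small balls with boundary data $u$ and shrinking the balls, using Schauder theory for smoothness). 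By monotone convergence $u_j\to u$ in $L^1(B')$.

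The quantitative core is that $u_j\to u$ quasi-uniformly: for every $\delta>0$ the open sets $O_j(\delta):=\{z\in B':u_j(z)-u(z)>\delta\}$ satisfy $C_\gamma\big(O_j(\delta)\big)\to0$ as $j\to\infty$. This is the (much simpler, linear) counterpart of the capacity estimates underpinning the Bedford--Taylor convergence theory, and I would prove it as in \cite{BT82}: bound $\int_{O_j(\delta)}dd^cw\wedge\gamma^{n-1}$ for a competitor $w$ with $0\le w\le1$ by comparing $w$ on $O_j(\delta)$ with the relative extremal function of $O_j(\delta)$ for $L$, and use that $O_j(\delta)$ is a decreasing family of open sets with empty intersection, that $C_\gamma$ is finite on relatively compact subsets of $\Omega$ (the $k=1$ case of Proposition~\ref{cln-ineq-local}), and that the Riesz measures $\mu_{u_j},\mu_u$ assign no mass to sets of zero $C_\gamma$-capacity. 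The only feature special to the Hermitian setting, namely the torsion terms $d\gamma$ and $dd^c\gamma$ produced by the integrations by parts, is absorbed exactly as in the proof of Proposition~\ref{cln-ineq-local}. With the estimate at hand, one concludes in the classical way: given $\varepsilon>0$, pick $\delta_k\downarrow0$ and indices $j_k$ with $C_\gamma\big(O_{j_k}(\delta_k)\big)<\varepsilon\,2^{-k}$, put $U:=\bigcup_k O_{j_k}(\delta_k)$; then $C_\gamma(U)<\varepsilon$, while on $B'\setminus U$ one has $0\le u_{j_k}-u\le\delta_k$ for every $k$, so the continuous functions $u_{j_k}$ converge uniformly there and $u|_{B'\setminus U}$ is continuous.

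I expect the main obstacle to be the construction of the decreasing continuous $\gamma$-subharmonic approximants $u_j$, precisely because the one-line mollification argument available in the flat (or K\"ahler) case breaks down for a general Hermitian $\gamma$, forcing one to invoke the solvability and interior regularity of the linear elliptic Dirichlet problem for $L$ (or an equivalent monotone regularization of the Riesz measure). Once the $u_j$ are available the remaining steps are the classical Bedford--Taylor argument (\cite{BT76,BT82}; see also \cite{kolodziej05,KN1}), with the Hermitian torsion handled as in Section~\ref{S1}; indeed the whole proposition may be regarded as a special case of classical potential theory for second order linear elliptic operators.
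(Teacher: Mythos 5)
The paper does not actually prove this proposition: it is recalled (together with Lemma~\ref{app-gamma}) as a fact of classical potential theory for the \emph{linear} elliptic operator $w\mapsto dd^c w\wedge\gamma^{n-1}$, so your overall strategy -- localize, note that only this linear operator enters and none of the $\Gamma_m$ cone subtleties, run a Bedford--Taylor quasi-uniform convergence argument and conclude with the $\varepsilon 2^{-k}$ covering -- is exactly what the citation intends. However, two of your steps are not justified as written. First, the monotone approximation you need is precisely the content of Lemma~\ref{app-gamma}, which you could simply invoke; the substitutes you propose do not obviously deliver it: mollifying the Riesz measure gives continuous potentials $G_B\mu_\varepsilon$ with no monotonicity in $\varepsilon$, and Poisson modification on shrinking balls produces functions that majorize $u$ and are modified only inside the small balls, not a globally continuous sequence decreasing to $u$.

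Second, the quantitative core $C_\gamma\bigl(O_j(\delta)\bigr)\to 0$ does not follow from the facts you list. A Choquet-type capacity is not continuous along decreasing sequences of \emph{open} sets; the intersection $\bigcap_j O_j(\delta)$ is not empty but contains $\{u=-\infty\}$; and the claim that the Riesz measures $\mu_{u_j},\mu_u$ charge no set of zero $C_\gamma$-capacity is false for unbounded $\gamma$-subharmonic functions (the Green potential of a point mass already violates it). For a fixed competitor $w$ one does get $\int_{O_j(\delta)}dd^cw\wedge\gamma^{n-1}\to0$ by monotone convergence of sets against a fixed measure, but the whole difficulty is uniformity in $w$, and that is obtained by a different mechanism: treat first \emph{bounded} $u$, and for $0\le w\le1$ compare $u$ with $u_j-\delta+\delta w$ on $\{u<u_j-\delta\}$; the comparison (maximum) principle for the linear operator, with the torsion terms $dd^c\gamma^{n-1}$ absorbed exactly as in \eqref{repeat-gamma} in the proof of Lemma~\ref{special-cp}, gives $\delta\int_{O_j(\delta)\cap K}dd^cw\wedge\gamma^{n-1}\le\int_{\{u<u_j\}}dd^cu\wedge\gamma^{n-1}+o(1)$ uniformly in $w$, and the right-hand side tends to $0$ because $dd^cu\wedge\gamma^{n-1}$ is a fixed locally finite measure and, for bounded $u$, the sets $\{u<u_j\}$ decrease to the empty set. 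The general case then follows by truncation $u_k=\max(u,-k)$ together with a CLN-type bound $C_\gamma(\{u<-k\}\cap K)\le C/k$, since $u=u_k$ outside the open set $\{u<-k\}$. With these repairs (and Lemma~\ref{app-gamma} quoted for the approximants) your outline becomes the classical proof the paper has in mind.
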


\begin{lem} \label{app-gamma}Every $\gamma-$subharmonic in a neighbourhood of the closure of $\Omega$ is the limit of a decreasing sequence of smooth $\gamma-$subharmonic functions, in $\Omega$. \end{lem}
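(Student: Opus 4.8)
The plan is to recognise the lemma as a classical fact about subsolutions of a linear elliptic operator, and to prove it by the heat flow of that operator. Since $\Gamma_1$ is the half-space $\{S_1>0\}$, a function $u$ is $\gamma$-subharmonic precisely when $Lu\ge-n$ in the sense of distributions, where, writing $\gamma=\ii\sum g_{j\bar k}dz_j\wedge d\bar z_k$, the operator $L:=\sum g^{j\bar k}\d_j\d_{\bar k}$ is the second-order uniformly elliptic operator associated with $\gamma$, with coefficients smooth on $\bC^n$. I emphasise at the outset that plain mollification does \emph{not} settle the lemma: $L$ has variable coefficients, so the commutator $L(u*\rho_\varepsilon)-(Lu)*\rho_\varepsilon$ need not tend to $0$ pointwise, and $u*\rho_\varepsilon$ need not be $\gamma$-subharmonic. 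This is the point of departure from the pluripotential setting, where $dd^c$ has constant coefficients.

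First I would reduce to genuine $L$-subharmonicity. Fix a smooth bounded domain $V$ with $\bar\Omega\Subset V$ inside the neighbourhood on which $u$ is $\gamma$-subharmonic, and let $\phi_0\in C^\infty(\bar V)$ solve the linear Dirichlet problem $L\phi_0=-n$ in $V$, $\phi_0=0$ on $\d V$ (solvable with smooth solution by standard elliptic theory). Then $w:=u-\phi_0$ satisfies $Lw\ge0$ on $V$, i.e. $w$ is $L$-subharmonic there.

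Next I would smooth $w$ by the Dirichlet heat semigroup $(P_t)_{t>0}$ of $L$ on $V$. The classical inputs are: (i) $P_t$ is positivity-preserving and $P_t1\le1$; (ii) $P_t$ maps $L^\infty(V)$ into $C^\infty(V)$ for $t>0$, by parabolic interior regularity; (iii) if $Lw\ge0$ then $LP_tw=P_t(Lw)\ge0$, so each $P_tw$ is $L$-subharmonic and, since $\d_tP_tw=LP_tw\ge0$, the map $t\mapsto P_tw$ is nondecreasing; (iv) for $L$-subharmonic $w$ one has $w\le P_tw$ (parabolic comparison) and $P_tw\to w$ pointwise on $V$ as $t\downarrow0$ (using that $w$ is upper semicontinuous and the caloric measure concentrates at the base point), so $P_tw\downarrow w$. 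Setting $u_k:=P_{1/k}w+\phi_0$, each $u_k$ is smooth on $V\supset\bar\Omega$; it is $\gamma$-subharmonic because $Lu_k=P_{1/k}(Lw)-n\ge-n$; the sequence is decreasing, because $P_{1/k}w$ decreases in $k$ by (iii); and $u_k\to w+\phi_0=u$ on $\bar\Omega$ by (iv). This is the required decreasing sequence of smooth $\gamma$-subharmonic functions in $\Omega$.

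The main obstacle is property (iv) — that the heat extension of the $L$-subharmonic function $w$ decreases to $w$ — together with the parabolic regularity and maximum principle behind (i)--(iii); all of these are classical for uniformly elliptic operators with smooth coefficients, and I would invoke the corresponding potential theory rather than redo it. An alternative route, avoiding the heat flow: by the classical potential theory of $L$, a $\gamma$-subharmonic function is locally a decreasing limit of $\gamma$-subharmonic functions continuous on closed balls (Perron modifications on small balls, smooth in the interior by elliptic regularity), and one patches these local approximants by the regularised maximum $M_\eta(\cdot,\cdot)$, which preserves smoothness and, since $\d_1M_\eta+\d_2M_\eta\equiv1$ and the Hessian of $M_\eta$ is positive semidefinite, preserves the inequality $L(\cdot)\ge-n$.
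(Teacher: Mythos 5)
The paper does not actually prove this lemma: it is recalled, together with Proposition~\ref{quasi-con}, as a known fact from the classical potential theory of second order linear uniformly elliptic operators with smooth coefficients, so there is no argument of the authors to compare yours with. Supplying a proof via the heat semigroup of $L$ is a legitimate classical route. One remark on conventions first: in this paper ``$\gamma$-subharmonic'' means $dd^c w\wedge\gamma^{n-1}\ge 0$ --- this is forced by the definition of $C_\gamma$ and by the sentence ``$\tilde v:=v+w$ is a $\gamma$-subharmonic function'' in the proof of Lemma~\ref{special-cp} --- i.e.\ $Lw\ge 0$ for your operator $L$, not $Lw\ge -n$. Your preliminary reduction via $\phi_0$ is therefore unnecessary and, as written, delivers approximants satisfying only $Lu_k\ge -n$, which is the wrong class here; the fix is simply to drop $\phi_0$ and run the semigroup on $u$ itself.

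The step that is genuinely incomplete is (iv). For the \emph{Dirichlet} semigroup on $V$ the inequality $w\le P_tw$ is false in general: if $w>0$ somewhere near $\d V$, the zero lateral boundary condition pulls $P_tw$ below $w$ there, and the parabolic minimum principle you invoke requires $h:=P_tw-w\ge 0$ on the whole parabolic boundary, which fails on $(0,T]\times\d V$. This is repaired by first subtracting $\sup_{\bar V}u$ (finite, since $u$ is upper semicontinuous on the compact set $\bar V$, and constants are $L$-harmonic), so that $w\le 0$; note also that the comparison at the initial time for merely upper semicontinuous data is itself the delicate point, usually handled via the sub-mean-value property of $w$ against harmonic measure (equivalently, the submartingale property of $w$ along the diffusion generated by $L$) rather than a naive maximum principle. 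A second, smaller point: $L=\sum g^{j\bar k}\d_j\d_{\bar k}$ is in non-divergence form and not self-adjoint, so $LP_tw=P_t(Lw)$ is not literally available; it is cleaner to obtain monotonicity from $P_{t+s}w=P_t(P_sw)\ge P_tw$, using $P_sw\ge w$ together with the positivity of $P_t$, after which $LP_tw=\d_tP_tw\ge 0$ shows each $P_tw$ is $L$-subharmonic. With these adjustments your scheme is correct: $P_{1/k}\bigl(u-\sup_{\bar V}u\bigr)+\sup_{\bar V}u$ is smooth on $V\supset\bar\Omega$, $\gamma$-subharmonic, decreasing in $k$, and converges pointwise to $u$ by upper semicontinuity and the concentration of the heat kernel at the base point. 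Your alternative route (Perron modifications plus the regularized maximum) is also viable and closer in spirit to the pluripotential literature, though arranging the glued local approximants into a globally \emph{decreasing} sequence takes additional bookkeeping.
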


Next, we strengthen the domination principle. It is usually applied locally, so we formulate it for $\Omega$ being a ball.

\begin{thm}[maximality] \label{maximality}
Let $\Omega$ denote a ball and let $v \in SH_m(\omega) \cap L^\infty(\Omega)$. Let $u \in \cA_m(\omega)\cap C(\bar \Omega)$ be the uniform limit of $\{u_j\}_{j=1}^\infty \subset SH_m(\omega) \cap C^\infty(\bar \Omega)$. Suppose that $G:=\{u <v\}\subset \subset \Omega$.  If $\omega_u^m \wedge \omega^{n-m} =0$ on $G$, then $G$ is empty.
\end{thm}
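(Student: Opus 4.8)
The plan is to argue by contradiction: suppose $G = \{u < v\} \neq \emptyset$. The strategy mirrors the classical Bedford–Taylor / Kołodziej maximality argument, where the key device is to compare the "Hessian mass" of $u$ on $G$ with that of a well-chosen competitor, and to control the error terms coming from non-closedness of $\omega$ using the estimates of Section~\ref{S1}. First I would fix a small $\varepsilon > 0$ and, since $v$ is bounded, pick a constant so that on a slightly shrunken sublevel set the function $v$ stays strictly below $u$ near $\partial G$; more precisely, replace $v$ by $v_\varepsilon := v - \varepsilon \rho$ or by $\max\{u, v - \delta\}$-type modifications so that the relevant set is still relatively compact in $\Omega$ and the two functions agree near its boundary. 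Using Lemma~\ref{app-gamma} I would also approximate $v$ from above by smooth $(\tilde\omega,1)$-subharmonic functions along the reductions described after Definition~\ref{non-smooth-omega-m}; but since here we only need $v \in SH_m(\omega) \cap L^\infty$, it is cleaner to pass to $w := \max\{u, v\} \in SH_m(\omega) \cap C(\bar\Omega)$ (continuity holds on $\bar\Omega$ after the modification since $u$ is continuous and $w = u$ near $\partial G$), and note $w \in \cA_m(\omega) \cap C(\bar\Omega)$ on a neighbourhood of $\bar G$ because it equals $u$ there off $G$ and one can glue the local decreasing approximants of $u$.

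The heart of the argument is the inequality
\[
	\int_G \omega_w^m \wedge \omega^{n-m} \leq \int_G \omega_u^m \wedge \omega^{n-m} = 0,
\]
which would force $\omega_w^m \wedge \omega^{n-m} = 0$ on $G$, and then — since $w \geq u$ with equality on $\partial G$ and $\omega_w^m \wedge \omega^{n-m} \geq \omega_u^m \wedge \omega^{n-m}$ fails in general but the domination principle (Proposition~\ref{domination-prin}) applies to $w$ and $u$ on $G$ with the right inequality on the Hessian measures — we would get $w \leq u$ on $\bar G$, contradicting $w = v > u$ on $G$. To prove the displayed mass inequality I would run the standard integration-by-parts comparison: with $\chi$ a cutoff and $T_j = \sum_{s} \omega_{u_j}^s \wedge \omega_{w_j}^{m-1-s} \wedge \omega^{n-m}$ (where $w_j \searrow w$, $u_j \searrow u$ are the smooth approximants), write
\[
	\int \chi (\omega_{w_j}^m - \omega_{u_j}^m) \wedge \omega^{n-m} = \int \chi \, dd^c(w_j - u_j) \wedge T_j = \int (w_j - u_j)\, dd^c(\chi T_j),
\]
and estimate $\|dd^c(\chi T_j)\|$ exactly as in the proof of Proposition~\ref{wedge-prod} and the CLN inequality, using Corollary~\ref{poly-lem} and Lemma~\ref{form-s-m-1} to absorb the $d\omega$, $d^c\omega$, $dd^c\omega$ terms (the bound \eqref{tor-cst}) against $\omega_{u_j}^s \wedge \omega_{w_j}^{m-1-s} \wedge \omega^{n-m+\bullet}$. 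Choosing $\chi \equiv 1$ on $G$ and supported in a small neighbourhood, the boundary region where $w_j - u_j$ need not vanish is controlled by the quasi-continuity of $v$ (Proposition~\ref{quasi-con}) with respect to the capacity $C_\gamma$: outside a small open set $U$ of tiny capacity, $v$ is continuous, so the "bad" contribution of the near-boundary layer is $o(1)$ in $\varepsilon$ and in the capacity of $U$.

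The main obstacle I anticipate is precisely the interplay between the non-Kähler error terms and the boundary layer. In the Kähler case $d\omega = 0$ kills the first four lines of \eqref{dd^cw}, and one only fights the boundary discrepancy $\{w \neq u\}$ near $\partial G$; here one must simultaneously (i) keep the competitor inside $\cA_m(\omega) \cap C(\bar\Omega)$ after the $\varepsilon$-modification — which is delicate because $v$ is only in $SH_m \cap L^\infty$, so one genuinely needs the local decreasing-approximation property to hold for the glued function, not just for $u$ — and (ii) show that the torsion terms, which carry exponents of $\omega$ strictly less than $n-m$, are still dominated after multiplying by the cutoff, using that $\omega_{u_j}, \omega_{w_j} \in \Gamma_m(\omega)$ with $m-1$ of them available so that Corollary~\ref{poly-lem} applies with $k \leq m-2$. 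Getting the bookkeeping right so that the constant $B_\omega$ enters only multiplicatively against already-controlled masses (and does not accumulate as $\varepsilon \to 0$) is the technical crux; I would handle it by first proving the mass inequality on $\{u < v - \varepsilon\} \Subset G$, letting $\varepsilon \to 0$ using monotone convergence of the Hessian measures (Corollary~\ref{we-pro-1}) and quasi-continuity, and only at the end invoking Proposition~\ref{domination-prin} to close the contradiction.
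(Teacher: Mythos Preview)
Your approach has two genuine gaps that block it, and they are precisely the obstacles the paper's proof is designed to sidestep.

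\textbf{First gap: you cannot form $\omega_w^m \wedge \omega^{n-m}$.} At this point in the paper, the Hessian operator is only defined for functions in $\cA_m(\omega)\cap C(\Omega)$ (Corollary~\ref{we-pro-1}). You take $w=\max\{u,v\}$ with $v$ merely in $SH_m(\omega)\cap L^\infty$, and argue that $w\in\cA_m(\omega)\cap C(\bar\Omega)$ by ``gluing'' approximants of $u$ near $\partial G$. But on $G$ itself $w=v$, and we have no decreasing smooth $(\omega,m)$-sh approximants of $v$: establishing $SH_m=\cA_m$ is the goal of the final subsection, and Theorem~\ref{maximality} is one of its ingredients. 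So your ``smooth approximants $w_j\searrow w$'' simply do not exist yet, and without them neither $\omega_w^m\wedge\omega^{n-m}$ nor the integration-by-parts identity with $T_j$ makes sense. The continuity of $w$ is also not available.

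\textbf{Second gap: the mass inequality you need is false in the Hermitian setting.} Even granting a competitor $w$, the inequality
\[
\int_G \omega_w^m\wedge\omega^{n-m}\le \int_G \omega_u^m\wedge\omega^{n-m}
\]
is the classical comparison principle, which does \emph{not} hold for Hermitian $\omega$. What survives (Theorem~\ref{weak-cp}, or the local Lemma~\ref{special-cp}) carries a multiplicative error $(1+Cs/\varepsilon^m)$ and, more importantly, the proof produces additive error terms proportional to the \emph{lower-order} positive masses $\int\omega_u^{k}\wedge\omega^{n-k}$, $k<m$. These do not tend to zero as $\varepsilon\to 0$; your sketch treats them as harmlessly ``multiplicative against already-controlled masses'', but in fact they survive with a fixed positive sign and prevent you from concluding $\int_G\omega_w^m\wedge\omega^{n-m}=0$.

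The paper's route avoids both issues: it never takes a Hessian of $v$ or of $\max\{u,v\}$. Lemma~\ref{special-cp} compares only the masses $\int_{U(\varepsilon,t)}\omega_u^{k}\wedge\omega^{n-k}$ of $u$ itself (using $v$ only through the sublevel set and via the linear inequality $\varepsilon\,\omega\wedge\gamma^{n-1}\le\omega_{(1-\varepsilon)v}\wedge\gamma^{n-1}$, which needs nothing beyond $v\in SH_m\cap L^\infty$). From $\omega_u^m\wedge\omega^{n-m}=0$ on $U(\varepsilon,t)$ the lemma forces $\omega_u^{m-1}\wedge\omega^{n-m+1}=0$ there; iterating down one reaches $\omega^n=0$ on a nonempty open set, a contradiction. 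This descent is the missing idea in your plan.
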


To prove the theorem, we need the following result.

\begin{lem} \label{special-cp}  Fix $0< \vepsilon <1$ and the constant $B_\omega$ in \eqref{tor-cst}. Let $v \in SH_m(\omega) \cap L^\infty(\Omega)$, with $\Omega$ denoting a ball. Assume $\{u_j\}_{j=1}^\infty \subset SH_m(\omega) \cap C^\infty(\bar \Omega)$  converges uniformly to $u$ as $j \to \infty$ in $\bar \Omega$.  Denote $S (\vepsilon ):= \inf_{\Omega} [u -(1-\vepsilon)v]$ and $U(\vepsilon, t) := \{u < (1-\vepsilon)v + S(\vepsilon ) +t \}$ for $t>0$. Suppose that $U(\vepsilon, t_0) \subset \subset \Omega$ for some $t_0>0$. Then, for $0< t < \min\{\vepsilon^3/16B_\omega, t_0\}$
\[
	\vepsilon \int_{U(\vepsilon,t)}  \omega_u^{m-1} \wedge \omega^{n-m+1}
	\leq (1 + \frac{Ct}{\vepsilon^m}) \int_{U(\vepsilon,t)} \omega_u^{m} \wedge \omega^{n-m},
\]
where $C$ is a uniform constant depending only on $n,m, B_\omega$.
\end{lem}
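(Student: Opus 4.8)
The plan is to reduce the inequality to an elementary recursion among the quantities $A_k:=\int_{U(\varepsilon,t)}\omega_u^k\wedge\omega^{n-k}$, $0\le k\le m$ (with $A_j:=0$ for $j<0$), namely
\[
\varepsilon A_{k-1}\ \le\ A_k + C_0\,t\,(A_{k-1}+A_{k-2}+A_{k-3}),\qquad 1\le k\le m,
\]
with $C_0=C_0(n,m,B_\omega)$, and then to solve it. What makes this recursion available for \emph{every} $k$, not only $k=m$, is that $S(\varepsilon)$ and the sets $U(\varepsilon,\cdot)$ do not involve $m$; since $\Gamma_m\subseteq\Gamma_k$ (so $SH_m(\omega)\subseteq SH_k(\omega)$), all the positivity and estimates used below hold with $m$ replaced by any $1\le k\le m$.

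\textbf{The recursion.} Fix $k$, pick $0<t'<t$, and set $\psi:=(1-\varepsilon)v+S(\varepsilon)+t'$, so that $\omega_\psi=\varepsilon\omega+(1-\varepsilon)\omega_v$ and, by the definition of $S(\varepsilon)$, $0\le\psi-u\le t'$ on $U(\varepsilon,t')$. Put $w:=\max\{u,\psi\}$ and $\rho:=w-u=(\psi-u)^+$; then $\rho$ is quasi-continuous, $0\le\rho\le t'$, and $\mathrm{supp}\,\rho=\overline{U(\varepsilon,t')}\subset\subset U(\varepsilon,t)\subset\subset\Omega$. As in the proof of Proposition~\ref{domination-prin}, on $U(\varepsilon,t')$ one has
\[
\begin{aligned}
\omega_w\wedge\omega_u^{k-1}\wedge\omega^{n-k}
&=\omega_\psi\wedge\omega_u^{k-1}\wedge\omega^{n-k}\\
&=\varepsilon\,\omega_u^{k-1}\wedge\omega^{n-k+1}+(1-\varepsilon)\,\omega_v\wedge\omega_u^{k-1}\wedge\omega^{n-k}\\
&\ge\varepsilon\,\omega_u^{k-1}\wedge\omega^{n-k+1},
\end{aligned}
\]
the last step dropping a positive current (by \eqref{n-m-exponent} and Corollary~\ref{we-pro-1}). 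Since $w=u$ off $\mathrm{supp}\,\rho$ and $\omega_w-\omega_u=dd^c\rho$, the integration by parts $\int_\Omega dd^c\rho\wedge(\omega_u^{k-1}\wedge\omega^{n-k})=\int_\Omega\rho\,dd^c(\omega_u^{k-1}\wedge\omega^{n-k})$, valid for compactly supported $\rho$ as in the proof of Proposition~\ref{wedge-prod}, gives
\[
\begin{aligned}
\varepsilon\int_{U(\varepsilon,t')}\omega_u^{k-1}\wedge\omega^{n-k+1}
&\le\int_{U(\varepsilon,t')}\omega_w\wedge\omega_u^{k-1}\wedge\omega^{n-k}\\
&\le A_k+t'\int_{U(\varepsilon,t)}\bigl|dd^c(\omega_u^{k-1}\wedge\omega^{n-k})\bigr|.
\end{aligned}
\]
Expanding $dd^c(\omega_u^{k-1}\wedge\omega^{n-k})$ as in \eqref{dd^cw}, every term is $\omega_u^{j}$ with $j\in\{k-1,k-2,k-3\}$ wedged with a smooth form of complementary bidegree built from $\omega,d\omega,dd^c\omega$; these are not $\omega$-positive, so \eqref{tor-cst} cannot be used directly as in the Monge-Amp\`ere case, but Lemma~\ref{form-s-m-1} bounds each of them in absolute value by a constant multiple of $\omega_u^{j}\wedge\omega^{n-j}$. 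Hence $\int_{U(\varepsilon,t)}\bigl|dd^c(\omega_u^{k-1}\wedge\omega^{n-k})\bigr|\le C_0(A_{k-1}+A_{k-2}+A_{k-3})$, and letting $t'\nearrow t$ (monotone convergence, since $U(\varepsilon,t')\nearrow U(\varepsilon,t)$) yields the recursion.

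\textbf{Solving the recursion.} Here I use $t<\varepsilon^3/(16B_\omega)$ and $\varepsilon<1$ to make the absorptions legitimate. A downward induction shows $A_j\le(2/\varepsilon)A_{j+1}$ for $0\le j\le m-2$: the case $j=0$ is immediate from the $k=1$ inequality; for the step one inserts $A_{j-1}\le(2/\varepsilon)A_j$ and $A_{j-2}\le(4/\varepsilon^2)A_j$ into the $k=j+1$ inequality, so the $A_j$-terms on the right are $\le(7C_0t/\varepsilon^2)A_j\le(\varepsilon/2)A_j$ and move to the left. Feeding $A_{m-2}\le(2/\varepsilon)A_{m-1}$ and $A_{m-3}\le(4/\varepsilon^2)A_{m-1}$ into the $k=m$ inequality gives $\varepsilon A_{m-1}(1-7C_0t/\varepsilon^3)\le A_m$; as $7C_0t/\varepsilon^3\le1/2$, inverting yields $\varepsilon A_{m-1}\le(1+14C_0t/\varepsilon^3)A_m\le(1+Ct/\varepsilon^m)A_m$, the last inequality because $\varepsilon<1$ (for $m=1,2$ the computation has fewer terms and gives the power $\varepsilon^m$ directly).

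\textbf{Main obstacle.} The crux is the torsion estimate for $dd^c(\omega_u^{k-1}\wedge\omega^{n-k})$: the intermediate forms there fail to be positive, so the crude bounds \eqref{tor-cst} are unavailable, and only the cone estimates of Section~\ref{s11}, packaged in Lemma~\ref{form-s-m-1}, permit bounding them by positive forms — exactly the Hermitian--Hessian difficulty flagged in the introduction. The remaining points — justifying the integration by parts and the localization of $\omega_w$ on the possibly non-open set $U(\varepsilon,t')$ while $v$ is merely bounded $(\omega,m)$-subharmonic — are routine given the quasi-continuity and convergence results already in place (Propositions~\ref{quasi-con} and \ref{wedge-prod}), and solving the recursion is elementary bookkeeping.
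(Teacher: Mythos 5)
Your proof is correct and follows essentially the same route as the paper's: reduce to smooth strictly $(\omega,m)$-subharmonic $u$, establish the comparison-type inequality $\int_{U}\omega_{(1-\vepsilon)v}\wedge\omega_u^{k}\wedge\omega^{n-k-1}\le\int_U\omega_u^{k+1}\wedge\omega^{n-k-1}+t\int_U\|dd^c(\omega_u^{k}\wedge\omega^{n-k-1})\|$ for each $k\le m-1$, control the torsion term by Lemma~\ref{form-s-m-1} (the genuine obstacle, as you correctly identify), and close the resulting recursion $\vepsilon a_k\le a_{k+1}+Ct(a_k+a_{k-1}+a_{k-2})$. The only differences are presentational: the paper obtains the comparison inequality by invoking the proof of Bedford--Taylor's Proposition 3.1 after using Michelsohn's trick to rewrite $\omega_u^{k}\wedge\omega^{n-k-1}$ as $\gamma^{n-1}$ and converting $v$ into a genuinely $\gamma$-subharmonic function, whereas you reprove it directly via $\max\{u,\psi\}$ and quasi-continuity, and you fold the top-level estimate into the same recursion rather than treating it separately.
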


\begin{proof} 
By Corollary~\ref{we-pro-1}, it is enough to show that 
\[
\vepsilon \int_{U_j(\vepsilon,t)}\omega_{u_j}^{m-1} \wedge \omega^{n-m+1}\leq 	 (1 + \frac{Ct}{\vepsilon^m})\int_{U_j(\vepsilon,t)} \omega_{u_j}^{m} \wedge \omega^{n-m} ,
\]
where $U_j(\vepsilon, t)$ is the sublevel set corresponding to $u_j$ and $v$ defined as above. In other words, we only need to prove the lemma under the assumption that  $u$ is smooth and strictly $(\omega,m)$-subharmonic, i.e. $\omega_u \in \Gamma_m(\omega)$ (achieved by  considering the sequence $(1-1/j)u_j$, $j\geq 1$). 

Moreover, since $\vepsilon \omega_{u}^{m-1} \wedge \omega^{n-m+1} \leq  \omega_{(1-\vepsilon)v}\wedge \omega_{u}^{m-1} \wedge \omega^{n-m}$, it suffices to prove that
\begin{equation} \label{ml2-eq1}
	\int_{U(\vepsilon,t)} \omega_{(1-\vepsilon)v}\wedge \omega_{u}^{m-1} \wedge \omega^{n-m}
	\leq (1 + \frac{Ct}{\vepsilon^m})\int_{U(\vepsilon,t)} \omega_{u}^{m} \wedge \omega^{n-m}.
\end{equation}
Since $\omega_u^{m-1} \wedge \omega^{n-m}>0$, applying \cite[ Eq.~(4.8)]{michelsohn82} we can write 
\begin{equation} \label{michelsohn}
\gamma^{n-1} := \omega_u^{m-1} \wedge \omega^{n-m}
\end{equation}
for some  Hermitian metric $\gamma$. By the definition of an     $(\omega,m)$-subharmonic function, 
\[
\omega_v\wedge \gamma^{n-1} \geq 0.
\]
Solving the linear elliptic equation  we can write $\omega \wedge \gamma^{n-1} = dd^c w \wedge \gamma^{n-1}$ for some smooth $\gamma-$subharmonic function $w$ in $\Omega$. Therefore, if we set $\tilde v := v + w$, then $\tilde v$ is a $\gamma$-subharmonic function.
Having this property we can use the proof of \cite[Proposition 3.1]{BT76} and the quasi-continuity  of $\tilde v$ (equivalently that  of $v$), from Proposition~\ref{quasi-con} to get that
\[
	\int_{U(\vepsilon,t)} dd^c (1-\vepsilon) v \wedge \gamma^{n-1}
	\leq \int_{U(\vepsilon,t)} dd^c u \wedge \gamma^{n-1} +
	\int_{U(\vepsilon,t)} [(1-\vepsilon)v + S_\vepsilon + t -u] dd^c \gamma^{n-1}.
\]
It implies that
\begin{equation} \label{repeat-gamma}
	\int_{U(\vepsilon,t)} \omega_{(1-\vepsilon)v} \wedge \gamma^{n-1}
	\leq \int_{U(\vepsilon,t)} \omega_u \wedge \gamma^{n-1} 
	+ t \int_{U(\vepsilon,t)} \|dd^c \gamma^{n-1}\|,
\end{equation}
where $\|dd^c \gamma^{n-1}\|$ is the total variation of $dd^c \gamma^{n-1}$. Furthermore,  we can use Lemma~\ref{form-s-m-1} to bound $\|dd^c \gamma^{n-1}\|$ from above by
\[
R := C(\omega_u^{m-1}\wedge \omega^{n-m+1} + \omega_u^{m-2}\wedge \omega^{n-m+2} + \omega_u^{m-3}\wedge \omega^{n-m+3}),
\]
where $C$ depends only on $X, \omega$, $n,m$. 
Therefore, the inequality \eqref{ml2-eq1} will follow if we have that 
\[
	\int_{U(\vepsilon,t)} R
	\leq \frac{C}{\vepsilon^m} \int_{U(\vepsilon,t)} \omega_u^m \wedge \omega^{n-m}
\]
for every $0< t < \min\{\vepsilon^3/16B_\omega, t_0\}$. Writing 
$
	a_k := \int_{U(\vepsilon,t)} \omega_u^k \wedge \omega^{n-k}, 
$ for  $0 \leq k \leq m$, we need  to show that 
\[
	a_k \leq \frac{C a_m}{\vepsilon^m} .
\]
As in \cite[Theorem~2.3]{KN1} we shall verify that for $0<t< \delta:= \min\{\vepsilon^3/16B_\omega, t_0\}$,
\[
	\vepsilon a_k \leq a_{k+1} + \delta B_\omega (a_k + a_{k-1} + a_{k-2}),
\]
where we understand $a_k \equiv 0$ if $k<0$. Indeed, since $u$ is smooth and strictly $(\omega,m)$-subharmonic, the inequality \eqref{repeat-gamma} applied for $\gamma _k^{n-1} : = \omega_u^k\wedge \omega^{n-k-1}>0$, $0\leq k \leq m-2$ (see \eqref{michelsohn}), gives that
\[
	\int_{U(\vepsilon, t)} \omega_{(1-\vepsilon)v}  \wedge \gamma _k^{n-1}
	\leq \int_{U(\vepsilon, t)} \omega_u \wedge \gamma _k^{n-1}
	+ t \int_{U(\vepsilon,t)} \|dd^c \gamma _k^{n-1}\|.
\]
By \eqref{dd^cw}, \eqref{tor-cst} and Lemma~\ref{form-s-m-1} we have
\[
	\int_{U(\vepsilon,t)} \|dd^c \gamma _k^{n-1}\|
\leq	 B_\omega (a_k + a_{k-1} + a_{k-2}).
\]
Moreover, since $v$ is a bounded $(\omega,m)$-subharmonic function, one also has
\[
	\vepsilon \int_{U(\vepsilon, t)} \omega \wedge \gamma _k^{n-1}
	\leq \int_{U(\vepsilon, t)} \omega_{(1-\vepsilon)v}  \wedge \gamma _k^{n-1} .
\]
Combining last three  inequalities we get that for $0< t < \delta$,
\[
	\vepsilon a_k \leq a_{k+1} + \delta B_\omega (a_k + a_{k-1} + a_{k-2}). 
\]
Thus  the proof of the lemma  follows.
\end{proof}


\begin{proof}[Proof of Theorem~\ref{maximality}] 
Suppose that $\{u<v\}$ is not empty, then for $\vepsilon >0$ small enough,  we  have $\{u < (1-\vepsilon)v + \inf_{\Omega}[w - (1-\vepsilon)v] + t \} \subset \{u<v\}$ for any $0<t \leq t_0$, 
where $t_0>0$ depends on $u, v, \vepsilon$. Applying Lemma~\ref{special-cp}  we have for $0< t \leq \min\{\vepsilon^{m+3}/16B_\omega, t_0\}$ 
\begin{align*}
	\vepsilon \int_{U(\vepsilon,t)} \omega_u^{m-1}\wedge \omega^{n-m+1} 
	\leq 	C \int_{U(\vepsilon,t)} \omega_u^{m} \wedge \omega^{n-m} =0,
\end{align*}
where $C$ is independent of $t$.
Therefore, $\omega_u^{m-1} \wedge \omega^{n-m+1}=0$ in $U(\vepsilon,t)$ for $0<t \leq t_1 $, where $t_1:= \min\{\vepsilon^{m+3}/16B_\omega, t_0\}$.  
Thus we can iterate this argument  to  get that $\omega_u^{m-2} \wedge \omega^{n-m+2} = ... =\omega^n =0$ in $U(\vepsilon,t_1)$. This is impossible and the  proof of the theorem follows.
\end{proof}

\begin{observation} \label{maximality-cp}
The  statement of Theorem~\ref{maximality} holds true if we replace $\bar \Omega$ by a compact Hermitian manifold, with the same proof modulo obvious modifications.
\end{observation}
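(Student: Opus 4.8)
The plan is to repeat the proof of Theorem~\ref{maximality} with the compact manifold $X$ in place of $\bar\Omega$, observing that on a boundaryless manifold the hypothesis $\{u<v\}\subset\subset\Omega$ becomes automatic and that the integrations by parts lose their boundary contributions. One argues by contradiction: assume $G:=\{u<v\}\neq\emptyset$, so $\eta:=-\tfrac12\inf_X(u-v)>0$. For small $\vepsilon>0$ put $S(\vepsilon):=\inf_X[u-(1-\vepsilon)v]$ and $U(\vepsilon,t):=\{u<(1-\vepsilon)v+S(\vepsilon)+t\}$. These sets are nonempty (they contain the minimum set of $u-(1-\vepsilon)v$) and, since $X$ is compact, automatically relatively compact, so no separate containment hypothesis is needed; and since the oscillation of $v$ on $X$ is finite, one checks that $U(\vepsilon,t)\subset G$ once $\vepsilon$ and $t$ are small enough, say $t\le t_0=t_0(u,v,\vepsilon)>0$. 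Hence $\omega_u^m\wedge\omega^{n-m}=0$ on $U(\vepsilon,t)$.

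The core is the compact analogue of Lemma~\ref{special-cp}: with $a_k:=\int_{U(\vepsilon,t)}\omega_u^k\wedge\omega^{n-k}$ and $B_\omega$ the constant \eqref{tor-cst} taken on $X$, for $0<t<\delta:=\min\{\vepsilon^3/16B_\omega,t_0\}$ one proves
\[
	\vepsilon\,a_k\ \leq\ a_{k+1}+\delta B_\omega\,(a_k+a_{k-1}+a_{k-2}),\qquad 0\leq k\leq m-1.
\]
As in the proof of Lemma~\ref{special-cp}, one first reduces (via the uniform approximation and Corollary~\ref{we-pro-1}) to $u$ smooth and strictly $(\omega,m)$-subharmonic, so that each $\gamma_k^{n-1}:=\omega_u^k\wedge\omega^{n-k-1}$ is the $(n-1)$-st power of a globally defined \emph{smooth} Hermitian metric by \cite[Eq.~(4.8)]{michelsohn82}. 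The step that replaces the local Dirichlet-problem device of Lemma~\ref{special-cp} is the remark that, $\gamma_k^{n-1}$ being smooth, the currents $\omega_v\wedge\gamma_k^{n-1}$, $\omega_{(1-\vepsilon)v+S(\vepsilon)+t}\wedge\gamma_k^{n-1}$ and $\omega_{\max\{u,(1-\vepsilon)v+S(\vepsilon)+t\}}\wedge\gamma_k^{n-1}$ are positive $(n,n)$-currents, hence positive Radon measures on $X$; this is exactly the property that made the auxiliary $\gamma$-subharmonic function necessary in the ball. Applying the Bedford--Taylor comparison argument \cite[Proposition~3.1]{BT76} to $h:=\max\{(1-\vepsilon)v+S(\vepsilon)+t-u,\,0\}$ — which vanishes off $\overline{U(\vepsilon,t)}$, satisfies $0\leq h\leq t$ there, and for which Stokes on the closed manifold $X$ gives $\int_X dd^c h\wedge\gamma_k^{n-1}=\int_X h\,dd^c\gamma_k^{n-1}$ with no boundary term — together with the quasi-continuity of $v$ on $X$ (which follows from Proposition~\ref{quasi-con} by a finite cover of $X$ by coordinate balls) yields
\[
	\int_{U(\vepsilon,t)}\omega_{(1-\vepsilon)v}\wedge\gamma_k^{n-1}\ \leq\ \int_{U(\vepsilon,t)}\omega_u\wedge\gamma_k^{n-1}+t\int_{\overline{U(\vepsilon,t)}}\|dd^c\gamma_k^{n-1}\|.
\]
Expanding $dd^c\gamma_k^{n-1}$ by \eqref{dd^cw}, bounding its coefficients with \eqref{tor-cst} and Lemma~\ref{form-s-m-1}, and combining with $\vepsilon\,\omega\wedge\gamma_k^{n-1}\leq\omega_{(1-\vepsilon)v}\wedge\gamma_k^{n-1}$ (valid since $v$ is a bounded $(\omega,m)$-subharmonic function) then gives the displayed chain of inequalities just as in the proof of Lemma~\ref{special-cp}.

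Granting the chain, the endgame is unchanged: for $t\leq\min\{\vepsilon^{m+3}/16B_\omega,t_0\}$ the equality $a_m=\int_{U(\vepsilon,t)}\omega_u^m\wedge\omega^{n-m}=0$ and the $k=m-1$ inequality force $a_{m-1}=0$, and iterating downward forces $a_{m-2}=\cdots=a_0=0$; but $a_0=\int_{U(\vepsilon,t)}\omega^n>0$ because $U(\vepsilon,t)$ is a nonempty set of positive Lebesgue measure and $\omega^n$ is a strictly positive volume form — a contradiction, so $G=\emptyset$. I expect the only point genuinely requiring care to be the comparison inequality just displayed: one must verify, as in Bedford--Taylor, the locality of the measure $\omega_\bullet\wedge\gamma_k^{n-1}$ and that it does not charge the relevant level set (using the quasi-continuity of $v$), and one must check that the error term produced by Stokes' theorem — nonzero precisely because the smooth forms $\gamma_k^{n-1}$ are not $d$- or $d^c$-closed on a Hermitian manifold — is exactly the term $t\int\|dd^c\gamma_k^{n-1}\|$ already present in \eqref{repeat-gamma} and controlled by Lemma~\ref{form-s-m-1}. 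Every other ingredient is local and transfers to $X$ verbatim.
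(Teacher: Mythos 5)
Your proposal is correct and follows the paper's intended route (transplant Lemma~\ref{special-cp} to $X$ and run the same downward iteration), but you handle the one genuinely non-obvious step differently from the written proof of Lemma~\ref{special-cp}, and your variant is well adapted to the global setting. In the ball the authors convert the inequality $\omega_v\wedge\gamma_k^{n-1}\geq 0$ into honest $\gamma$-subharmonicity of $\tilde v=v+w$ by solving the Dirichlet problem $dd^cw\wedge\gamma_k^{n-1}=\omega\wedge\gamma_k^{n-1}$, so that Proposition~\ref{quasi-con} and Lemma~\ref{app-gamma} apply verbatim; on a closed manifold this auxiliary equation has no global solution (the right-hand side is positive, hence not orthogonal to the kernel of the adjoint), so that device must either be localized or dropped. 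You drop it: you apply Stokes on the closed $X$ to $h=\max\{(1-\vepsilon)v+S(\vepsilon)+t-u,0\}$, use only that $\omega_\bullet\wedge\gamma_k^{n-1}$ is a positive Radon measure for each of the relevant potentials together with the quasi-continuity of $v$, and recover exactly \eqref{repeat-gamma} with the error term $t\int\|dd^c\gamma_k^{n-1}\|$ controlled by \eqref{dd^cw}, \eqref{tor-cst} and Lemma~\ref{form-s-m-1}. This is in the spirit of the global comparison arguments of \cite{KN1} that underlie Theorem~\ref{weak-cp}, and it correctly isolates the only modification that is not purely cosmetic; the remaining Bedford--Taylor technicalities (locality of $dd^c\max\{\cdot,\cdot\}\wedge\gamma_k^{n-1}$ and the limit passages via monotone approximation) are invoked by citation in your write-up exactly as they are in the paper's proof of Lemma~\ref{special-cp}, so nothing is lost.

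Two small points of precision. First, in the endgame the single $k=m-1$ inequality $\vepsilon a_{m-1}\leq a_m+\delta B_\omega(a_{m-1}+a_{m-2}+a_{m-3})$ does not by itself force $a_{m-1}=0$ because of the lower-order terms; you need the full coupled system for $0\leq k\leq m-1$, which (as in \cite[Theorem~2.3]{KN1}) yields $a_k\leq Ca_m/\vepsilon^m$ for all $k$ and hence the vanishing of every $a_k$ once $a_m=0$ --- the conclusion is the same, but it is the system, not one line of it, that delivers it. Second, the final contradiction $a_0=\int_{U(\vepsilon,t)}\omega^n>0$ deserves the one-line justification that $U(\vepsilon,t)$ has positive Lebesgue measure: it is a sublevel set of the lower semicontinuous function $u-(1-\vepsilon)v$ near its infimum, and the sub-mean-value property of $v$ (as an $(\omega,1)$-subharmonic function) prevents this set from being Lebesgue-null. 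Both points are inherited from the local proof and are easily repaired.
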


We end this subsection by proving a volume-capacity inequality which corresponds to the one in  \cite{dinew-kolodziej14}. This inequality was the key ingredient to study local integrability of $m-$subharmonic functions.

\begin{defn}[capacity] \label{m-cap} For any Borel set $E \subset \Omega$,
\[
	cap_{m,\omega}(E)
	:= \sup\left\{\int_E (\omega + dd^c v)^m \wedge \omega^{n-m} : v\in \cA_m(\omega) \cap C(\Omega), \; 0\leq v\leq 1\right\}.
\]
\end{defn}

\begin{lem}[local volume-capacity inequality]  \label{vol-cap-local}
Let $1<\tau< n/(n-m)$. There exists a constant $C = C(\tau)$ such that for any Borel set $E \subset \Omega$,
\[
	V_\omega(E) \leq C [cap_{m,\omega}(E)]^{\tau},
\]
where $V_\omega(E) := \int_E \omega^n$.
\end{lem}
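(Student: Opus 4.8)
The plan is to reduce the inequality to the corresponding local integrability bound for $(\omega,m)$-subharmonic functions, exactly in the spirit of \cite{dinew-kolodziej14}, and then invoke a sublevel-set estimate. First I would observe that it suffices to prove the claim for a fixed small ball $B = B(z_0,r) \subset\subset \Omega$ and sets $E \subset B$, since $\Omega$ can be covered by finitely many such balls and $V_\omega$ is additive while $cap_{m,\omega}$ is monotone and enjoys a (sub)additivity property up to a constant. On such a ball one can work with the model case: replacing $\omega$ by a constant-coefficient Hermitian form at $z_0$ costs only a bounded factor, and the $(\omega,m)$-positive cone is, pointwise, comparable to the Euclidean $\Gamma_m$ cone.

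Next I would exploit the known local integrability: every $(\omega,m)$-subharmonic function $u$ on a neighbourhood of $\bar B$ lies in $L^q_{loc}$ for some $q = q(\tau) > 1$ — indeed, since $u$ is in particular $\gamma$-subharmonic for the auxiliary metrics $\gamma$ produced via \cite{michelsohn82}, classical potential theory already gives $L^q$ bounds, but the sharper exponent tied to $\tau < n/(n-m)$ comes from the comparison with the Euclidean $m$-subharmonic theory of \cite{dinew-kolodziej14}, where it is shown that $m$-subharmonic functions belong to $L^q_{loc}$ for $q < nm/(n-m)$. The key quantitative statement I want is: there is a uniform bound
\[
	\int_{B'} (-u)\, \omega^n \leq C \big(1 + \sup_{B} (-u)\big)^{?}
\]
is \emph{not} quite enough; rather I need the Alexander–Taylor–type inequality relating capacity to the extremal function. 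So the cleaner route is: for a compact $E \subset B$, let $u_E = u_{E,B}^*$ be the relative extremal function (the usc regularization of the supremum of $v \in SH_m(\omega)$, $v \le 0$ on $B$, $v \le -1$ on $E$); standard arguments (using Proposition~\ref{basic-facts}, Corollary~\ref{we-pro-1}, and Theorem~\ref{maximality}/Remark~\ref{maximality-cp}) show $u_E \in \cA_m(\omega) \cap L^\infty$, that $\omega_{u_E}^m \wedge \omega^{n-m}$ is supported on $\bar E$, and that $cap_{m,\omega}(E)$ is comparable to $\int_B \omega_{u_E}^m \wedge \omega^{n-m}$, up to a constant depending on $B$ and on $\sup_B(-u_E) \le 1$ (here the Hermitian torsion terms in \eqref{dd^cw}, controlled by \eqref{tor-cst} and Lemma~\ref{form-s-m-1}, contribute only bounded correction factors, as in the proof of Proposition~\ref{cln-ineq-local}).

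Then I would close the loop by quoting (or re-deriving on the ball, using the comparison with the constant-coefficient model) the sharp sublevel-set estimate for $u_E$: since $\| u_E \|_{L^q(B)}$ is controlled and $u_E$ is comparable to $\log$-type behaviour determined by the mass $cap_{m,\omega}(E)$, one gets $V_\omega(E) \le V_\omega(\{u_E \le -1/2\}) \le C \| u_E \|_{L^q}^q \le C \big( cap_{m,\omega}(E) \big)^{\tau}$, where the exponent $\tau$ emerges from the relation $q \leftrightarrow \tau$ via the integrability threshold $nm/(n-m)$ and a Hölder/Chebyshev step; the restriction $\tau < n/(n-m)$ is exactly what makes the required $q$ admissible. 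The main obstacle is the last step: on a general Hermitian manifold one does not have the clean pluripotential machinery (capacitary potentials, Alexander–Taylor inequality) available for free, so the real work is to transfer the Euclidean $m$-subharmonic estimates of \cite{dinew-kolodziej14} to the $(\omega,m)$ setting — controlling the discrepancy between $\omega$ and its frozen-coefficient version uniformly, and checking that the torsion error terms appearing in \eqref{dd^cw} do not degrade the exponent. Everything else is a routine adaptation of the corresponding K\"ahler/Euclidean arguments.
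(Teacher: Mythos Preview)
Your proposal outlines a plausible but much harder route, and the ``main obstacle'' you yourself flag is never actually overcome: you need the extremal function $u_E$ to lie in $\cA_m(\omega)$ in order to invoke Theorem~\ref{maximality} and to identify $cap_{m,\omega}(E)$ with $\int \omega_{u_E}^m\wedge\omega^{n-m}$, but at this point in the paper no approximation result for $(\omega,m)$-subharmonic functions is available (that is only done in Section~3, on compact manifolds, and uses the volume-capacity inequality downstream). Moreover the final chain $V_\omega(\{u_E\le -1/2\}) \le C\|u_E\|_{L^q}^q \le C\big(cap_{m,\omega}(E)\big)^\tau$ is asserted rather than proved; the step linking $\|u_E\|_{L^q}$ to $cap_{m,\omega}(E)$ is exactly the Alexander--Taylor type estimate you would have to establish from scratch in the Hermitian $(\omega,m)$ setting.

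The paper bypasses all of this with a two-line reduction you are missing. One does not re-prove the Euclidean result of \cite{dinew-kolodziej14}; one simply compares the two capacities. Introduce the auxiliary capacity
\[
cap_m(E)=\sup\Big\{\int_E (dd^c w)^m\wedge\omega^{n-m}: w\in\cA_m\cap C(\Omega),\ 0\le w\le 1\Big\},
\]
built from $m$-$\omega$-subharmonic functions (no $\omega+$). For this capacity the inequality $V_\omega(E)\le C[cap_m(E)]^\tau$ is already \cite[Proposition~2.1]{dinew-kolodziej14}, and the argument there survives for non-K\"ahler $\omega$ since it rests on the mixed form type inequality. Then one shows $cap_m(E)\le C\,cap_{m,\omega}(E)$: pick $\rho=|z|^2-A\le 0$ with $C^{-1}dd^c\rho\le\omega\le C\,dd^c\rho$ and $|\rho/C|\le 1/2$; for any competitor $0\le w\le 1/2$ in $cap_m$, the function $w-\rho/C$ is an admissible competitor in $cap_{m,\omega}$ and
\[
\int_E(dd^c w)^m\wedge\omega^{n-m}\le\int_E\big(\omega+dd^c(w-\rho/C)\big)^m\wedge\omega^{n-m}\le cap_{m,\omega}(E).
\]
This capacity comparison is the key idea; it avoids extremal functions, $L^q$ integrability, and all the torsion-tracking you anticipate.
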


The exponent here is optimal because if we take $\omega = dd^c |z|^2$, then the explicit formula for $cap_m(B(0,r))$ in $\Omega = B(0, 1)$ with $0< r<1$, provides an example.

\begin{proof} From \cite[Proposition~2.1]{dinew-kolodziej14} we know that $V_\omega(E) \leq C [cap_m(E)]^{\tau}$  with 
\[
	cap_m(E) = \sup\{ \int_E (dd^c w)^m \wedge \omega^{n-m}: w\in \cA_m \cap C(\Omega), 0\leq w \leq 1\},
\]
which is the capacity related to  $m-\omega$-subharmonic functions in $\Omega$ and the class $\cA_{m}$ consists of all $m-\omega$-subharmonic functions which are locally approximable by a decreasing sequence of smooth $m-\omega$-subharmonic functions in $\Omega$.
 Note that the argument in \cite{dinew-kolodziej14} remains valid for  non-K\"ahler $\omega$  since the mixed form type inequality used there still holds by stability estimates for the Monge-Amp\`ere equation.

Therefore, the proof will follow if we can show that $cap_{m}(E)$ is less than $cap_{m,\omega}(E)$. Since $\omega$ is globally defined there exists a  constant $C>0$ such that
\[
	\frac{1}{C} dd^c \rho \leq \omega \leq C dd^c \rho,
\]
where $\rho = |z|^2 -A\leq 0$. We can choose $C$ such that $|\rho/C| \leq 1/2$. Take $0 \leq w \leq 1/2$ a continuous $m-\omega$-subharmonic in $\cA_{m}$, then it is easy to see that
\[
	\int_E (dd^cw)^m \wedge \omega^{n-m}  
	\leq \int_E \left(\omega + dd^c(w - \frac{\rho}{C})\right)^m \wedge \omega^{n-m} \leq cap_{m,\omega}(E).
\]
Hence, $cap_m(E) \leq 2^n  cap_{m,\omega}(E)$. 
\end{proof}

\section{Hessian equations on compact Hermitian manifolds}

In this section we study Hessian equations on  a compact $n$-dimensional Hermitian manifold  $(X,\omega)$. To do this we need first to transfer the local results from the previous section
to the manifold setting. Then we apply them to prove results on the existence and stability of solutions of Hessian equations.
Finally, we prove that every $(\omega,m)$-subharmonic function can be  approximated  by a decreasing sequence of smooth $(\omega, m)$-subharmonic function on $X$. 
This allows to replace assumptions on $\cA_m(\omega)$ by just $SH_m(\omega)$ in statements. In what follows we  use our notations as in \cite{KN1, KN2}, we write $L^1(\omega^n)$ for $L^1(X, \omega^n)$, $\|.\|_p:= \|.\|_{L^p(X, \omega^n)}$ and $\|.\|_\infty:= \sup_X |.|$.

\subsection{Pluripotential estimates for $(\omega, m)$-subharmonic functions}
 Fix an integer $1 \leq m < n$.
By  means of partition of unity we carry over the local construction from Section~\ref{S1} onto the compact Hermitian manifold $X$.

\begin{defn} An upper semi-conitnuous function $u: X \to [-\infty, +\infty[$ 
is called $(\omega,m)$-subharmonic in $X$ if $u \in L^1(\omega^n)$ and   $u \in SH_m(U, \omega)$ for each coordinate patch $U\subset\subset X$. \end{defn}

We denote by $SH_m(X, \omega)$ or $SH_m(\omega)$ the set of all $(\omega,m)$-subharmonic functions  in $X$. 
Similarly, we say that $u \in \cA_m(\omega)$ if $u\in SH_m(\omega)$ and there exists a decreasing sequence of smooth $(\omega,m)$-subharmonic functions on $X$  which converges to $u$ (globally). 
So, if $u \in \cA_m(\omega)$, then for any coordinate patch $U \subset \subset X$ we have $u \in \cA_m(U, \omega)$. 
Thus the properties of  $\cA_m(U, \omega)$ (e.g. Proposition~\ref{basic-facts}, Hessian measures, the Bedford-Taylor convergence theorem, etc.)   are also valid for $\cA_m(\omega)$.

Below we state several results  which are analogues of those from \cite{DK12}. We omit the proofs which are similar and  require only the local properties.

\begin{prop}[CLN inequalities] \label{cln-ine} 
Let $\varphi_1,...,\vphi_m \in \cA_m(\omega)\cap C(X)$ and $0\leq \vphi_1,...,\vphi_m \leq 1$. Then there exists a uniform constant $C>0$ such that
\[
	\int_X \omega_{\vphi_1} \wedge \cdots \wedge \omega_{\vphi_m} \wedge\omega^{n-m} \leq C.
\]
\end{prop}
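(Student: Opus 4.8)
The plan is to reduce the global statement to the local one (Proposition~\ref{cln-ineq-local}) via a partition of unity, keeping track of the fact that on a compact manifold there is no boundary to worry about but the potentials $\vphi_j$ need to be replaced locally by genuinely $(\omega,m)$-subharmonic functions with controlled sup-norm. Concretely, I would first cover $X$ by finitely many coordinate balls $B_1,\dots,B_N$, with slightly larger concentric balls $B_i'\subset\subset U_i$ still contained in coordinate patches, and fix a partition of unity $\{\chi_i\}$ subordinate to $\{B_i\}$. Then
\[
	\int_X \omega_{\vphi_1}\wedge\cdots\wedge\omega_{\vphi_m}\wedge\omega^{n-m}
	= \sum_{i=1}^N \int_{B_i} \chi_i\, \omega_{\vphi_1}\wedge\cdots\wedge\omega_{\vphi_m}\wedge\omega^{n-m}
	\leq \sum_{i=1}^N \int_{B_i} \omega_{\vphi_1}\wedge\cdots\wedge\omega_{\vphi_m}\wedge\omega^{n-m},
\]
using that the measure $\omega_{\vphi_1}\wedge\cdots\wedge\omega_{\vphi_m}\wedge\omega^{n-m}$ is positive (Corollary~\ref{we-pro-1}, valid on each coordinate patch since $\vphi_j\in\cA_m(\omega)$).

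Next, on each $U_i$ I would write $\omega = dd^c g_i$ for a smooth local potential $g_i$ of the Hermitian form $\omega$ (this uses that $dd^c\omega$-type torsion terms are irrelevant: any Hermitian $(1,1)$-form is locally $dd^c$ of a smooth function after adjusting, or more simply one fixes a smooth function $g_i$ with $dd^c g_i \geq \omega$ on $U_i$, following the standard device in \cite{KN1}). Setting $\psi_j := \vphi_j + g_i$ on $U_i$, the forms $\omega_{\vphi_j} = dd^c\psi_j$ coincide with the complex Hessian forms of the functions $\psi_j$, which are genuinely $(\omega,m)$-subharmonic — indeed $dd^c\psi_j = \omega + dd^c\vphi_j \in \overline{\Gamma_m(\omega)}$ — and whose sup-norm on $B_i'$ is bounded by $1 + \|g_i\|_{L^\infty(B_i')} =: M_i$, a constant depending only on $(X,\omega)$ and the chosen cover. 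Applying the local CLN inequality (Proposition~\ref{cln-ineq-local}) with $K = \overline{B_i}$, $U = B_i'$, $k=m$, to the functions $\psi_1,\dots,\psi_m$ gives
\[
	\int_{B_i} \omega_{\vphi_1}\wedge\cdots\wedge\omega_{\vphi_m}\wedge\omega^{n-m}
	= \int_{B_i} dd^c\psi_1\wedge\cdots\wedge dd^c\psi_m\wedge\omega^{n-m}
	\leq C_{B_i,B_i',\omega}\,(1 + m M_i)^m.
\]
Summing over $i$ and taking $C := \sum_{i=1}^N C_{B_i,B_i',\omega}(1+mM_i)^m$, which is a uniform constant since everything in sight is fixed once the manifold, the metric and the finite cover are chosen, completes the proof.

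The only genuinely delicate point — and hence the main obstacle — is ensuring that the local CLN inequality applies: Proposition~\ref{cln-ineq-local} is stated for $u_j \in SH_m(\omega)\cap C^2$, whereas here $\vphi_j$ is only continuous and in $\cA_m(\omega)$. This is handled by the usual approximation: on each ball, $\vphi_j$ is a decreasing limit of smooth $(\omega,m)$-subharmonic functions $\vphi_j^{(\ell)}$, the local CLN bound holds for each $\ell$ with a constant independent of $\ell$ (since $\|\vphi_j^{(\ell)}\|_{L^\infty}$ is eventually bounded by the sup-norm of the continuous limit plus $1$), and then one passes to the limit using the Bedford–Taylor-type convergence (Proposition~\ref{wedge-prod}, Corollary~\ref{we-pro-1}) together with lower semicontinuity of mass under weak convergence of positive currents. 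I would remark that this is exactly the kind of routine localization referred to in the sentence preceding the statement, so in the paper itself one would simply invoke Proposition~\ref{cln-ineq-local} after the partition-of-unity reduction, the approximation step being already subsumed in the construction of $\cA_m(\omega)$ and Corollary~\ref{we-pro-1}.
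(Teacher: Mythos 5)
Your overall strategy --- cover $X$ by finitely many coordinate balls, use positivity of the current (Corollary~\ref{we-pro-1}) to reduce to a sum of local masses, apply the local CLN inequality (Proposition~\ref{cln-ineq-local}) on each ball to the smooth decreasing approximants furnished by the definition of $\cA_m(\omega)$, and pass to the limit via Proposition~\ref{wedge-prod} together with lower semicontinuity of mass on open sets --- is exactly what the paper intends here: the authors explicitly omit the proof as one ``requiring only the local properties.''

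There is, however, one step in your write-up that is wrong as stated, although it is also superfluous: the claim that on each coordinate patch one can write $\omega = dd^c g_i$ for a smooth function $g_i$. A Hermitian metric admits a local potential only when it is $d$-closed, i.e.\ K\"ahler; for a general Hermitian $\omega$ (the whole point of this paper) no such $g_i$ exists, and your fallback of choosing $g_i$ with $dd^c g_i \geq \omega$ does not rescue the identity $\omega_{\vphi_j} = dd^c(\vphi_j + g_i)$ on which your computation relies. Fortunately you never need this reduction: Proposition~\ref{cln-ineq-local} is already formulated for the forms $\omega_{u_j} = \omega + dd^c u_j$ themselves (which is precisely why its proof must handle the torsion terms $d\omega$, $dd^c\omega$ via Lemma~\ref{form-s-m-1}), so you may apply it directly to the smooth approximants of $\vphi_1,\dots,\vphi_m$, whose sup-norms on each fixed ball are eventually bounded by $2$ since the convergence is uniform on compact sets by Dini's theorem. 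With the local-potential step deleted, your argument is correct and coincides with the one the paper has in mind.
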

 
The following lemma seems to be classical (see e.g.  H\"ormander's book \cite{H94}).  
\begin{lem} \label{l1-compactness}
Let $\vphi \in SH_m(\omega)$ with $\sup_X \vphi =0$. There exists a uniform constant $C=C(X,\omega)>0$ such that\[  \int_X |\vphi| \omega^n \leq C.  \]
Consequently, the family $\{\vphi \in SH_m(\omega): \sup_X \vphi =0\}$ is compact in $SH_m(\omega)$ with respect to $L^1(\omega^n)-$topology, i.e. for any sequence $\vphi_j \in SH_m(\omega)$ with $\sup_X \vphi_j=0$, $j\geq 1$, there exists a subsequence $\{\vphi_{j_k}\}$ such that $\vphi_{j_k}$ converges to $\vphi \in SH_m(\omega)$ as $j_k \to + \infty$ in $L^1(\omega^n)$.
\end{lem}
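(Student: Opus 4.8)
The plan is to reduce the global statement to a purely local one by means of a partition of unity, and then to invoke the sub-mean value property of $(\omega,m)$-subharmonic functions together with a standard potential-theoretic compactness argument. First I would fix a finite cover of $X$ by coordinate balls $B_1,\dots,B_N$ with concentric larger balls $B_i'\subset\subset X$ still contained in coordinate charts, chosen so that each $\vphi\in SH_m(\omega)$, restricted to $B_i'$, is $(\omega,m)$-subharmonic there, hence in particular $(\tilde\omega,1)$-subharmonic for the metrics $\tilde\omega$ produced in Definition~\ref{non-smooth-omega-m} (equivalently, subharmonic for a uniformly elliptic operator with smooth coefficients). On each such ball an $(\omega,1)$-subharmonic function is, up to adding a fixed smooth function (a local potential for $\omega$), subharmonic in the classical sense, so it satisfies a sub-mean value inequality over concentric spheres or balls.

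The core estimate would then be obtained as follows. Since $\sup_X\vphi=0$, there is a point $x_0\in X$ with $\vphi(x_0)$ close to $0$; $x_0$ lies in some ball $B_i$, and the sub-mean value property gives a uniform lower bound $\int_{B_i'}\vphi\,\omega^n \geq -C_1$ (the classical potential-theoretic fact that a subharmonic function with a value near its supremum cannot have too negative an average on a fixed ball, using that it is bounded above by $0$). To propagate this bound to all of $X$ I would use a connectedness/chaining argument: if $\int_{B_i'}\vphi\,\omega^n$ is bounded below and $B_j'$ overlaps $B_i'$ in an open set, then the sub-mean value inequality on $B_j'$ centered at points of the overlap, combined with $\vphi\leq 0$, forces $\int_{B_j'}\vphi\,\omega^n$ to be bounded below as well; iterating over the finite cover of the connected manifold $X$ yields $\int_X|\vphi|\,\omega^n = -\int_X\vphi\,\omega^n\leq C$ with $C=C(X,\omega)$. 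This is exactly the argument one finds, e.g., in H\"ormander's book, adapted to the Hermitian setting, and nothing here requires the K\"ahler condition.

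For the compactness statement, given $\vphi_j\in SH_m(\omega)$ with $\sup_X\vphi_j=0$, the uniform bound $\|\vphi_j\|_{L^1(\omega^n)}\leq C$ together with the local fact that sets of $\delta$-subharmonic functions bounded in $L^1_{loc}$ are relatively compact in $L^1_{loc}$ (again from \cite{H94}) lets me extract, via a diagonal argument over the finite cover, a subsequence $\vphi_{j_k}$ converging in $L^1(\omega^n)$ and a.e. to some $\vphi\in L^1(\omega^n)$. Replacing $\vphi$ by its upper semicontinuous regularization and using Proposition~\ref{basic-facts}(b) (applied locally, $\vphi = (\limsup_k\vphi_{j_k})^*$ a.e.) shows $\vphi\in SH_m(U,\omega)$ on each patch, hence $\vphi\in SH_m(\omega)$, and $\sup_X\vphi=0$ follows from the a.e.\ convergence and the maximum principle for subharmonic functions.

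The main obstacle I anticipate is making the chaining step fully rigorous in the non-smooth Hermitian setting: one must be careful that the local identification of $(\omega,1)$-subharmonicity with classical subharmonicity (after subtracting a smooth potential $h_i$ with $dd^c h_i=\omega$ on $B_i'$) is uniform across the finitely many patches, so that the constants in the sub-mean value inequalities and in the overlap estimates can be chosen independently of $\vphi$. Once the covering data $B_i\subset\subset B_i'$ and the potentials $h_i$ are fixed, all constants become geometric constants of $(X,\omega)$, and the argument goes through; the remaining details are routine and I would omit them, as the authors indicate they do for the analogous results.
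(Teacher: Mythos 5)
Your argument is essentially correct, but it follows a genuinely different route from the paper's. The paper reduces to the case $m=1$ exactly as you do (every $\lambda\in\Gamma_m$ has $S_1(\lambda)>0$, so $(\omega,m)$-subharmonic implies $(\omega,1)$-subharmonic), but then it simply cites the global argument of Tosatti--Weinkove: one passes to the Gauduchon metric in the conformal class of $\omega$ and uses the representation of $\vphi$ via the Green function of that metric, which gives the bound $\int_X|\vphi|\,\omega^n\leq C$ directly from $n\,dd^c\vphi\wedge\omega^{n-1}/\omega^n\geq -n$ for smooth $\vphi$; the general case follows because for $m=1$ decreasing smooth approximation is classical. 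You instead run the local H\"ormander-style machine: sub-mean value inequality on coordinate balls, a chaining argument over a finite cover of the connected compact $X$, and local $L^1_{loc}$-compactness of subharmonic families. Both work; the paper's route is shorter because the global Green-function estimate replaces your chaining step, while yours is more self-contained and does not need the existence of Gauduchon metrics. One point in your write-up deserves correction: since $\omega$ is merely Hermitian it need not be $d$-closed, so there is in general \emph{no} local potential $h_i$ with $dd^ch_i=\omega$; the correct local normalization is the one you mention parenthetically, namely to view $(\omega,1)$-subharmonicity as $\Delta_\omega\vphi\geq -n$ for the (non-divergence form, smooth-coefficient, uniformly elliptic) trace operator $\Delta_\omega u = n\,dd^cu\wedge\omega^{n-1}/\omega^n$ and to add a fixed smooth local solution of $\Delta_\omega w = n$, after which the sub-mean value property (with respect to the $\Delta_\omega$-harmonic measure) and the $L^1$-compactness are the standard facts for elliptic operators. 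With that adjustment your chaining and diagonal-extraction arguments go through, and the final step identifying the limit as an element of $SH_m(\omega)$ via $(\limsup_k\vphi_{j_k})^*$ and the stability of the defining current inequalities under decreasing limits is the same in both approaches.
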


\begin{proof}
The first part is from \cite[Section~2, p.8]{TW13b}, where the proof used only the fact that $\vphi$ is a smooth $(\omega,1)$-subharmnic function, i.e. 
\[  n dd^c \vphi \wedge \omega^{n-1}/\omega^n \geq -n,  \] coupled with the existence of Green function for the Gauduchon metric in the conformal class of $\omega$. Since every $(\omega,1)$-subharmonic function is approximated by decreasing sequence of smooth $(\omega,1)$-subharmonic functions, so we get the statement for general $(\omega, m)$- subharmonic functions. 
The second part follows from Proposition~\ref{basic-facts} and requires only properties of $(\omega,1)$-subharmonic functions.   \end{proof}

The estimates of the decay of volume of sublevel sets follow directly from Lemma~\ref{l1-compactness}. 
We use the notation
\[
	V_\omega(E) := \int_E \omega^n.
\]

\begin{cor} \label{decay-vol-sub-sets}
Let $\vphi \in SH_m(\omega)$ with $\sup_X \vphi =0$. Then, for any $t>0$,
\[
	V_{\omega}(\{\vphi<-t\}) \leq C/t,
\]
where $C>0$ is a uniform constant.
\end{cor}

Following \cite{BT82} and  \cite{kolodziej03} we define the capacity related to  the Hessian equations.

\begin{defn}[capacity]\label{cap-cpt} For a Borel set $E \subset X$ 
\[
	cap_{m,\omega}(E) := 
	\sup\{\int_E \omega_\rho^m \wedge \omega^{n-m} 
	: \rho \in \cA_m(\omega) \cap C(X), \; 0\leq \rho \leq 1\}.\]
\end{defn}

Then,  as in the local case, we have the estimate with the  sharp exponent. 

\begin{prop} \label{vol-cap-2} 
Fix $1< \tau < n/(n-m)$. There exists a uniform constant $C = C(\tau, X, \omega)>0$ such that for any Borel set $E \subset X$, 
\[
	V_\omega(E) \leq C [cap_{m,\omega}(E)]^{\tau}.
\]
\end{prop}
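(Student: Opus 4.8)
The plan is to reduce the global volume--capacity inequality on $X$ to the local one (Lemma~\ref{vol-cap-local}) via a partition of unity, with care taken to relate the local capacities $cap_{m,\omega}$ on coordinate balls to the global capacity $cap_{m,\omega}$ on $X$. First I would fix a finite cover of $X$ by coordinate balls $\{B_s\}_{s=1}^N$, with each $B_s \subset\subset \Omega_s$ for a slightly larger coordinate ball $\Omega_s$ on which the local theory from Section~\ref{S1} applies. Given a Borel set $E\subset X$, write $E_s := E\cap B_s$, so that $V_\omega(E) \leq \sum_{s=1}^N V_\omega(E_s)$. On each $\Omega_s$ the local Lemma~\ref{vol-cap-local} yields $V_\omega(E_s) \leq C(\tau)\,[cap_{m,\omega}(E_s, \Omega_s)]^\tau$, where $cap_{m,\omega}(\cdot,\Omega_s)$ denotes the local capacity of Definition~\ref{m-cap}. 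Summing and using that $\tau>1$ (so $\sum a_s^\tau \leq (\sum a_s)^\tau$ for $a_s\geq 0$, up to a constant depending on $N$), it then suffices to show that each local capacity $cap_{m,\omega}(E_s, \Omega_s)$ is bounded, up to a uniform constant, by the global capacity $cap_{m,\omega}(E)$ of Definition~\ref{cap-cpt}.

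The key step is therefore the comparison $cap_{m,\omega}(F, \Omega_s) \leq C\, cap_{m,\omega}(F)$ for Borel $F\subset B_s$. To prove it, take any competitor $v \in \cA_m(\Omega_s, \omega)\cap C(\Omega_s)$ with $0\leq v\leq 1$ in the definition of the local capacity. I want to glue such a $v$ into a global $(\omega,m)$-subharmonic function. Fix a smooth strictly $(\omega,m)$-subharmonic ``bump'' localized near $B_s$: more precisely, using that $\Omega_s$ is a coordinate ball, choose a smooth function $\psi_s$ on $X$, supported in $\Omega_s$, with $\omega + dd^c\psi_s \in \Gamma_m(\omega)$ on all of $X$ (this is possible after multiplying $v$ by a small constant $\delta>0$ depending only on the geometry), and arrange that on a neighbourhood of $\bar B_s$ one has $\psi_s$ large enough that $\rho := \max\{\delta v + \psi_s,\, A\}$ (for a suitable constant $A$) equals $A$ near $\partial\Omega_s$ and equals $\delta v + \psi_s$ on $B_s$. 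By the gluing property (Proposition~\ref{basic-facts}(a), applied locally on $\Omega_s$ and trivially outside) $\rho$ extends to a global function in $SH_m(X,\omega)\cap C(X)$; after an affine normalization we get $0\leq \rho\leq 1$, hence $\rho$ is admissible for the global capacity. On $B_s$ the comparison $\omega_\rho^m\wedge\omega^{n-m} \geq (\delta)^m\, (dd^c v)^m \wedge \omega^{n-m}$ (monotonicity \eqref{incre-s-m-form}, after discarding the positive smooth part of $\psi_s$) shows $\delta^m \int_F (dd^c v)^m\wedge \omega^{n-m} \leq \int_F \omega_\rho^m\wedge \omega^{n-m}\leq cap_{m,\omega}(F)$; taking the supremum over $v$ gives $cap_{m}(F, \Omega_s) \leq \delta^{-m} cap_{m,\omega}(F)$, and since $\omega\leq C\,dd^c\rho_s$ on $\Omega_s$ for a local potential $\rho_s$ (as in the proof of Lemma~\ref{vol-cap-local}), this also controls $cap_{m,\omega}(F,\Omega_s)$.

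The main obstacle is making the gluing rigorous: one must check that the local competitor $v$, after scaling by $\delta$ and adding $\psi_s$, can be modified near $\partial\Omega_s$ so that the result is globally continuous and $(\omega,m)$-subharmonic, \emph{and} that the modification does not change the measure $\omega_\rho^m\wedge\omega^{n-m}$ on the set $F$ which lies well inside $B_s$. This is exactly the kind of local regluing argument used in Lemma~\ref{app-gamma} and in the proof of Proposition~\ref{wedge-prod} (where functions are arranged to agree near the boundary), so it goes through, but the bookkeeping of constants — the dependence of $\delta$ on $N$ and on the $C^2$-norms of the transition data, and the final constant $C(\tau, X,\omega)$ absorbing the $N$-dependent factor from $\sum a_s^\tau \leq N^{\tau-1}(\sum a_s)^\tau$ — must be tracked carefully. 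Once this is in place, combining the local inequality, the summation, and the capacity comparison yields $V_\omega(E) \leq C\,[cap_{m,\omega}(E)]^\tau$ with $C = C(\tau, X,\omega)$, which is the assertion.
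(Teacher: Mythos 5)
Your overall strategy --- localize, apply Lemma~\ref{vol-cap-local} on coordinate balls, and compare local with global capacity by gluing competitors --- is exactly the route the authors mention and decline to take (they remark that one could infer the statement from the local counterpart, but that the difficulties with approximation by smooth $(\omega,m)$-subharmonic functions make that approach more technical than a direct proof), and the step you yourself flag as ``the main obstacle'' is where the argument has a genuine gap. The global capacity of Definition~\ref{cap-cpt} ranges over competitors in $\cA_m(\omega)\cap C(X)$, i.e.\ functions admitting a \emph{globally} decreasing sequence of smooth $(\omega,m)$-subharmonic approximants on $X$, whereas a local competitor $v\in\cA_m(\Omega_s,\omega)$ is only known to be approximable on a small ball around each point separately. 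To certify that your glued function $\rho=\max\{\delta v+\psi_s,\,A\}$ is admissible you must produce a global decreasing sequence of smooth $(\omega,m)$-subharmonic functions converging to it. You appeal to Lemma~\ref{app-gamma} and to the regluing in Proposition~\ref{wedge-prod}, but Lemma~\ref{app-gamma} concerns $\gamma$-subharmonic functions (essentially the case $m=1$), where convolution smoothing is available; for $1<m<n$ and Hermitian $\omega$ no such regularization tool exists at this stage of the paper --- that is precisely why the class $\cA_m$ is introduced, and why the global approximation theorem (Lemma~\ref{appro-global}) is proved only at the very end, via a chain of results (Corollary~\ref{stabilit1}, Theorem~\ref{a-priori-estimate}, Lemma~\ref{capacity-growth-sublevel-sets}) that themselves depend on Proposition~\ref{vol-cap-2}. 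Invoking anything of that strength here would be circular, so the comparison $cap_{m,\omega}(F,\Omega_s)\le C\,cap_{m,\omega}(F)$ is not established by your argument.

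The paper avoids localization altogether. Given $E$, it solves the Monge--Amp\`ere equation $\omega_u^n=b\,{\bf 1}_E\,\omega^n$ on $X$ with $\sup_X u=0$, using the Hermitian existence theorem of \cite{KN1}; the a priori estimate from that paper gives $\|u\|_\infty\le C\,b^{1/n}[V_\omega(E)]^{1/pn}$ with $p=m\tau/n(\tau-1)$, once a uniform lower bound on $\|b\,{\bf 1}_E\|_p$ is secured. G\aa rding's mixed-form inequality then yields $\omega_u^m\wedge\omega^{n-m}\ge b^{m/n}{\bf 1}_E\,\omega^n$, so the normalized function $u/(1+\|u\|_\infty)$ is a global competitor and
\[
cap_{m,\omega}(E)\ \ge\ \frac{b^{m/n}\,V_\omega(E)}{(1+\|u\|_\infty)^m},
\]
which, combined with the bound on $\|u\|_\infty$ in one regime and the lower bound $b\ge V_\omega(X)/2V_\omega(E)$ in the complementary one, gives $V_\omega(E)\le C[cap_{m,\omega}(E)]^{\tau}$ directly. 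The competitor produced this way is automatically in $\cA_m(\omega)$ because $u$ is obtained in \cite{KN1} as a uniform limit of smooth approximants --- the global approximability your gluing cannot supply is built into the construction. To salvage a local-to-global argument you would have to either restrict the local capacity to globally approximable competitors from the outset, or first prove a gluing/regularization statement for $(\omega,m)$-subharmonic functions that does not rely on the existence theory; neither is routine.
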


\begin{proof} 
The basic idea is from \cite{dinew-kolodziej14}. Surprisingly, it is enough
to  use the estimates for the Monge-Amp\`ere equation to obtain a sharp  bound related to capacity defined in terms of more general Hessian equations.
One could infer the statement from the local counterpart, but due to the difficulties with approximation by smooth $(\omega, m)$-subharmonic functions that approach would be
more technical than a direct proof   (like  \cite{chinh13} in the K\"ahler case). This requires the estimates in the Hermitian setting \cite{KN1}.

Without loss of generality we assume that $V_\omega(E)>0$. Denote by ${\bf 1}_E$ the characteristic function of $E$. By \cite[Theorem~0.1]{KN1} we can find a continuous 
$\omega$-plurisubharmonic function $u$ on $X$ with $\sup_X u =0$ and a constant $b>0$ solving 
\[
	\omega_u^n = b \; {\bf 1}_E \omega^n.
\]
Set $p = \frac{m\tau}{n(\tau -1)} >1$. We will need the lower bound for $L^p$-norm of $b \,{\bf 1}_E$.

{\bf Fact.} There exists  a uniform constant $c_0>0$ depending on $X, \omega, p$ such that 
\[
	\|b \, {\bf 1}_E\|_p  \geq c_0.
\]
Indeed, suppose that it were not true, then there would be a sequence of Borel sets $\{E_j\}_{j=1}^\infty$ that
\[
	1 \geq \|b_j \,{\bf 1}_{E_j}\|_p \searrow 0 \quad \mbox{as } j \to 
	+\infty. 
\]
By \cite{KN1, KN2} we know that  for $0< t\leq t_{min}$ ($t_{min}>0$ depending only on $X, \omega$)
\[
	t^n \hbar(t) \leq  C \|b_j \,{\bf 1}_{E_j} \|_1 \leq C \|b_j \,{\bf 1}_{E_j}\|_p \searrow 0,
\]
where the function $\hbar (t)$ is the inverse function of $\kappa(t)$ defined  in \cite[Theorem~5.3]{KN1}. This leads to a contradiction for a fixed $t=t_{min}$.

Thus, by a priori estimates for Monge-Amp\`ere equations \cite[Corollary~5.6]{KN1} we have 
\begin{equation}\label{vc-eq1}
	\|u\|_\infty \leq C \|b \, {\bf 1}_E\|_p^\frac{1}{n} 
	= C b^{1/n} [V_\omega(E)]^{1/pn}.
\end{equation}
We observe that by the proof of \cite[Proposition~1.5]{cuong15} for $-1 \leq w \leq 0$
\[
	\int_X \omega_w^n \geq \int_X \omega^n - C\|w\|_\infty, 
\]
where $C = C(X, \omega)$. Hence, 
there exists $0< \delta = \delta(X, \omega)<1$ such that if $\|u\|_\infty \leq \delta$ then 
$
	\int_X \omega_u^n \geq V_\omega(X)/2,
$  i.e. $b \geq V_\omega(X)/2 V_\omega(E).$
Now we consider two cases. 

{\bf Case 1:} If $\|u\|_\infty > \delta$, then, by \eqref{vc-eq1} 
\begin{equation} \label{vc-eq2}
	\|u\|_\infty + 1 \leq  (C+ C/\delta)\,  b^{1/n} [V_\omega(E)]^{1/pn}.
\end{equation}
The mixed form type inequality \cite[Lemma 1.9]{cuong15} gives $\omega_u^m\wedge \omega^{n-m} \geq b^{m/n} {\bf 1}_E$. Hence,  by definition of capacity we have 
\begin{align*}
	cap_{m,\omega}(E) 
&	\geq	\frac{1}{(1+\|u\|_\infty)^m} \int_E (\omega + dd^c u)^m \wedge \omega^{n-m} \\
&	\geq \frac{1}{(1+\|u\|_\infty)^m} \int_E b^{m/n} {\bf 1}_E \omega^n \\
&	\geq	\frac{b^{m/n} V_\omega(E)}{C_1 b^{m/n} [V_\omega(E)]^{m/pn}} \\
&	= \frac{[V_\omega(E)]^{1-m/pn}}{C_1},
\end{align*}
where we used \eqref{vc-eq2} for the last inequality and $C_1 = (C + C/\delta)^m$. Therefore, we have
\[ V_\omega(E) \leq C [cap_{m,\omega}(E)]^{1 + m/(pn-m)}.\] Plugging the value of $p= \frac{m \tau}{n(\tau -1)}$  gives the desired inequality. 

{\bf Case 2:} If $\|u\|_\infty \leq \delta <1$, then $b \geq V_\omega(X)/2 V_\omega(E)$. Again, by definition we have
\begin{align*}
	cap_{m,\omega}(E) 
&	\geq \int_E \omega_u^m \wedge \omega^{n-m} \\
&	\geq \int_E b^\frac{m}{n} {\bf 1}_E \omega^n \\
&	\geq \left(\frac{V_\omega(X)}{2 V_\omega(E)}\right)^\frac{m}{n} 
\cdot V_\omega(E).
\end{align*}
It implies that $V_\omega(E) \leq C [cap_{m, \omega}(E)]^{n/(n-m)}.$ 
Thus we complete the proof.
\end{proof}

Let us recall that, by the definition,  the constant $B>0$ satisfies on $X$
\begin{equation} \label{torsion-global}
	-B \omega^2 \leq 2n dd^c \omega \leq B \omega^2, \quad
	-B \omega^3 \leq 4n^2 d\omega \wedge d^c \omega \leq B\omega^3.
\end{equation}

For general Hermitian metric $\omega$ the Hessian measures do not preserve the volume of manifold, so the classical comparison principle \cite{BT82, kolodziej05} is no longer true (see \cite{DK12}). However, a weaker form will be enough for several applications as it is proven in \cite{KN1, KN2}. We state below the analogue for Hessian operators.

\begin{thm}[weak comparison principle] \label{weak-cp}
Let $\vphi, \psi \in \cA_m(\omega)\cap C(X)$. Fix $0<\vepsilon<1$ and use the following notation $S(\vepsilon):= \inf_X [\vphi- (1-\vepsilon)\psi]$ and  $U(\vepsilon, s) := \{\vphi<(1-\vepsilon)\psi + S(\vepsilon)+s\}$ for $s>0$. Then, for $0<s< \vepsilon^3/16B$,
\[
	\int_{U(\vepsilon,s)} \omega_{(1-\vepsilon)\psi}^m \wedge \omega^{n-m}
	\leq (1+ \frac{C s}{\vepsilon^m})\int_{U(\vepsilon,s)} \omega_\vphi^m \wedge \omega^{n-m} ,
\]
where $C>0$ is a uniform constant depending only on $n,m,\omega$.
\end{thm}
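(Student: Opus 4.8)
The plan is to mimic the local argument from Lemma~\ref{special-cp}, transferring it wholesale to the compact manifold via Remark~\ref{maximality-cp} and the global analogues of the earlier tools. First I would reduce, exactly as in Lemma~\ref{special-cp}, to the case where $\vphi$ and $\psi$ are smooth and $\vphi$ is \emph{strictly} $(\omega,m)$-subharmonic: by Corollary~\ref{we-pro-1} and the fact that $\vphi,\psi\in\cA_m(\omega)$ admit decreasing smooth approximations, it suffices to prove the inequality for the approximants $(1-1/j)\vphi_j$ and $\psi_j$ and then pass to the limit. Since the statement is an integral inequality over the open set $U(\vepsilon,s)$, and on a compact manifold this set need not be relatively compact in a chart, the point of Remark~\ref{maximality-cp} is precisely that the whole machinery (quasi-continuity, the Bedford--Taylor type estimate of \cite[Prop.~3.1]{BT76}, and the Michelsohn representation $\gamma^{n-1}=\omega_\vphi^{m-1}\wedge\omega^{n-m}$) is available globally on $X$.

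Next I would run the iteration. Writing $a_k:=\int_{U(\vepsilon,s)}\omega_\vphi^k\wedge\omega^{n-k}$ for $0\le k\le m$ (with $a_k\equiv 0$ for $k<0$), the goal splits into two parts. The first is the chain of one-step inequalities
\[
	\vepsilon\, a_k \leq a_{k+1} + s\,B\,(a_k + a_{k-1} + a_{k-2}), \qquad 0\le k\le m-1,
\]
obtained by applying the global version of \eqref{repeat-gamma} with $\gamma_k^{n-1}:=\omega_\vphi^k\wedge\omega^{n-k-1}>0$ (valid since $\vphi$ is strictly $(\omega,m)$-subharmonic and $k\le m-2$, so Remark~\ref{equi-two-defn} gives positivity), bounding $\|dd^c\gamma_k^{n-1}\|$ via \eqref{dd^cw}, \eqref{torsion-global} and Lemma~\ref{form-s-m-1} by $B(a_k+a_{k-1}+a_{k-2})$, and using $\vepsilon\int\omega\wedge\gamma_k^{n-1}\le\int\omega_{(1-\vepsilon)\psi}\wedge\gamma_k^{n-1}$ since $\psi$ is a bounded $(\omega,m)$-subharmonic function. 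The top case $k=m-1$ is slightly different: there one uses $\gamma^{n-1}=\omega_\vphi^{m-1}\wedge\omega^{n-m}$ directly together with the Michelsohn metric and quasi-continuity of $\psi$ to get \eqref{repeat-gamma}, and $\|dd^c\gamma^{n-1}\|$ is controlled by $C(\omega_\vphi^{m-1}\wedge\omega^{n-m+1}+\omega_\vphi^{m-2}\wedge\omega^{n-m+2}+\omega_\vphi^{m-3}\wedge\omega^{n-m+3})$, i.e.\ again by $C(a_{m-1}+a_{m-2}+a_{m-3})$.

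The second part is to feed these inequalities into a finite descending recursion. Since $s<\vepsilon^3/16B$, each error term $sB(a_k+a_{k-1}+a_{k-2})$ is small relative to $\vepsilon a_k$, and solving the system from $k=0$ upward (or equivalently telescoping from $k=m-1$ down to $k=0$) yields $a_k\le (C/\vepsilon^m)\,a_m$ for all $k\le m$, and in particular, reassembling the bound on $\|dd^c\gamma^{n-1}\|$ in \eqref{repeat-gamma}, one concludes
\[
	\int_{U(\vepsilon,s)}\omega_{(1-\vepsilon)\psi}\wedge\omega_\vphi^{m-1}\wedge\omega^{n-m}
	\leq \Bigl(1+\tfrac{Cs}{\vepsilon^m}\Bigr)\int_{U(\vepsilon,s)}\omega_\vphi^m\wedge\omega^{n-m},
\]
and then \eqref{incre-s-m-form} (applied to $(1-\vepsilon)\psi$ against the positive form $\omega_\vphi^{m-1}\wedge\omega^{n-m}$) upgrades this to the claimed bound on $\int\omega_{(1-\vepsilon)\psi}^m\wedge\omega^{n-m}$. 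The main obstacle I expect is bookkeeping rather than conceptual: keeping the positivity hypotheses of Lemma~\ref{form-s-m-1} and Corollary~\ref{poly-lem} honest when the $\omega$-exponent drops below $n-m$ in \eqref{dd^cw} (this is exactly why strict $(\omega,m)$-subharmonicity of $\vphi$ is needed, so that $\omega_\vphi^k\wedge\omega^{n-k-1}$ represents a genuine Hermitian metric for $k\le m-2$), and making sure the constant $C$ emerging from the recursion is genuinely independent of $s$ and of the approximating sequences, so that the final passage to the limit $j\to\infty$ via Corollary~\ref{we-pro-1} goes through.
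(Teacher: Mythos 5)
Your reduction to smooth, strictly $(\omega,m)$-subharmonic approximants and your recursion on $a_k=\int_{U(\vepsilon,s)}\omega_\vphi^k\wedge\omega^{n-k}$ are fine; that part reproduces the proof of Lemma~\ref{special-cp} on $X$ and legitimately yields
\[
\int_{U(\vepsilon,s)}\omega_{(1-\vepsilon)\psi}\wedge\omega_\vphi^{m-1}\wedge\omega^{n-m}
\leq \Bigl(1+\tfrac{Cs}{\vepsilon^m}\Bigr)\int_{U(\vepsilon,s)}\omega_\vphi^{m}\wedge\omega^{n-m}.
\]
The gap is in your very last step. Inequality \eqref{incre-s-m-form} states $(\gamma+\eta)^m\wedge\omega^{n-m}\geq\gamma^m\wedge\omega^{n-m}$ for $\gamma,\eta\in\Gamma_m(\omega)$; it is a lower bound for the $m$-th power of a \emph{sum} and says nothing that would let you dominate $\omega_{(1-\vepsilon)\psi}^{m}\wedge\omega^{n-m}$ by $\omega_{(1-\vepsilon)\psi}\wedge\omega_\vphi^{m-1}\wedge\omega^{n-m}$. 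There is no such pointwise monotonicity: where $\omega_\vphi$ is small and $\omega_{(1-\vepsilon)\psi}$ is large, the left-hand form is much bigger than the right-hand one. So the "upgrade" from one factor of $\omega_{(1-\vepsilon)\psi}$ to $m$ factors does not follow from \eqref{incre-s-m-form}, and this is exactly the nontrivial content of the theorem.

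What the paper actually invokes is the argument of \cite[Theorem~0.2]{KN1}: one replaces the factors of $\omega_\vphi$ by $\omega_{(1-\vepsilon)\psi}$ \emph{one at a time}, i.e.\ one proves a partial comparison
\[
\int_{U(\vepsilon,s)}\omega_{(1-\vepsilon)\psi}^{j+1}\wedge\omega_\vphi^{m-j-1}\wedge\omega^{n-m}
\leq \int_{U(\vepsilon,s)}\omega_{(1-\vepsilon)\psi}^{j}\wedge\omega_\vphi^{m-j}\wedge\omega^{n-m}
+ s\,E_j
\]
for each $j=0,\dots,m-1$, taking the Michelsohn metric associated with the mixed positive form $\omega_{(1-\vepsilon)\psi}^{j}\wedge\omega_\vphi^{m-1-j}\wedge\omega^{n-m}$ as the background. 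The error $E_j$ now involves $dd^c$ of a wedge product of \emph{distinct} forms in $\Gamma_m(\omega)$ with $\omega$-exponent dropping below $n-m$, which is precisely why Corollary~\ref{poly-lem} (and not merely Lemma~\ref{form-s-m-1}) is needed; controlling these errors requires a recursion on the two-parameter family of mixed integrals $\int\omega_{(1-\vepsilon)\psi}^{j'}\wedge\omega_\vphi^{k'}\wedge\omega^{n-j'-k'}$, of which your one-parameter chain in $a_k$ is only the $j'=0$ column. Until you carry out this iteration over $j$, the stated conclusion for $\omega_{(1-\vepsilon)\psi}^m\wedge\omega^{n-m}$ is not established.
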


\begin{proof}
It follows from the argument in \cite[Theorem 0.2]{KN1} with the aid of Corollary \ref{poly-lem}.
\end{proof}

Thanks to the weak comparison principle we can estimate the rate of the decay of capacity of sublevel sets not far from the minimum point.

\begin{lem}  \label{capacity-growth-sublevel-sets} 
Fix $0<\vepsilon<3/4$ and $\vepsilon_B := \frac{1}{3} \min\{\vepsilon^m, \frac{\vepsilon^3}{16B}\}$. Consider $\vphi, \psi \in \cA_m(\omega) \cap C(X)$ with $\vphi \leq 0$ and $-1 \leq \psi \leq 0$. 
With  $U(\vepsilon,s)$  defined as in the previous theorem, 
for any $0<s,t<\vepsilon_B$, we have
\begin{equation} \label{cgss-eq}
	t^m cap_{m,\omega}(U(\vepsilon,s)) 
	\leq C \int_{U(\vepsilon, s+t)} \omega_\vphi^m\wedge \omega^{n-m},
\end{equation}
where $C>0$ depends only on $X,\omega$.
\end{lem}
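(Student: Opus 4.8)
The plan is to follow the standard pluripotential strategy of comparing the candidate sublevel set $U(\vepsilon,s)$ with a competitor built from a function $\rho$ that is admissible in the definition of $cap_{m,\omega}$. First I would fix $0\le\rho\le 1$ with $\rho\in\cA_m(\omega)\cap C(X)$ and observe that on $U(\vepsilon,s)$ one has $\vphi<(1-\vepsilon)\psi+S(\vepsilon)+s$, whereas on the boundary region $U(\vepsilon,s+t)\setminus U(\vepsilon,s)$ the function $\vphi$ still satisfies $\vphi<(1-\vepsilon)\psi+S(\vepsilon)+s+t$. The idea is to insert the perturbation $(1-\vepsilon)\psi+S(\vepsilon)+s+t\rho$ (or rather $\widetilde\psi:=(1-\vepsilon)\psi+t\rho$) and apply the weak comparison principle, Theorem~\ref{weak-cp}, to the pair $\vphi$ and $\widetilde\psi$. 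Since $-1\le\psi\le 0$ and $0\le\rho\le 1$, the function $\widetilde\psi$ is still a bounded $(\omega,m)$-subharmonic function in $\cA_m(\omega)\cap C(X)$, and one checks that $\{\vphi<\widetilde\psi+\inf_X[\vphi-\widetilde\psi]+s\}\subseteq U(\vepsilon,s+t)$ for $s,t$ in the allowed range, because adding $t\rho$ shifts the infimum by at most $t$.

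Next I would run the inequality from Theorem~\ref{weak-cp} on this modified sublevel set. On the inner set $U(\vepsilon,s)$, where $\rho$ can be taken close to $1$ (after the usual trick of replacing $\rho$ by $\max\{\rho, \text{something}\}$ or more simply using that $\rho$ enters linearly), the term $\omega_{\widetilde\psi}^m\wedge\omega^{n-m}$ expands by the binomial formula into $\sum_{k=0}^m\binom{m}{k}t^k\,\omega_{(1-\vepsilon)\psi}^{m-k}\wedge(dd^c\rho)^k\wedge\omega^{n-m}$ — here I would be careful to recall that since $k\le m$ the mixed forms $\omega_{(1-\vepsilon)\psi}^{m-k}\wedge(dd^c\rho)^k\wedge\omega^{n-m}$ are genuine positive currents (Corollary~\ref{we-pro-1}, Remark~\ref{equi-two-defn}), so the top-order term $t^m(dd^c\rho)^m\wedge\omega^{n-m}$ can be bounded below by dropping the lower-order ones. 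Integrating over $U(\vepsilon,s)$ and taking the supremum over admissible $\rho$ then produces $t^m\,cap_{m,\omega}(U(\vepsilon,s))$ on the left, up to the harmless factor $(1+Cs/\vepsilon^m)\le C$ coming from the statement of Theorem~\ref{weak-cp} and the bound $s<\vepsilon_B\le\vepsilon^m/3$.

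On the right-hand side, the weak comparison principle bounds everything by $(1+Cs/\vepsilon^m)\int_{U}\omega_\vphi^m\wedge\omega^{n-m}$ over the relevant sublevel set $U$, which by the inclusion established above is contained in $U(\vepsilon,s+t)$; since $s<\vepsilon_B$, the prefactor $(1+Cs/\vepsilon^m)$ is bounded by an absolute constant, giving exactly $C\int_{U(\vepsilon,s+t)}\omega_\vphi^m\wedge\omega^{n-m}$. One subtlety to handle at the start: the constant $\vepsilon_B=\frac13\min\{\vepsilon^m,\vepsilon^3/16B\}$ is chosen precisely so that $s+t<2\vepsilon_B<\vepsilon^3/16B$ keeps us in the hypothesis range of Theorem~\ref{weak-cp}, and so that $t\le\vepsilon_B$ makes the shift of the infimum controllable.

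The main obstacle I anticipate is the bookkeeping around the perturbation: one must verify cleanly that inserting $t\rho$ does not push the modified sublevel set outside $U(\vepsilon,s+t)$ regardless of which admissible $\rho$ is chosen, and that the $S(\vepsilon)$-type infimum behaves additively under this perturbation (it changes by something in $[0,t]$ since $0\le t\rho\le t$). A secondary point of care is that in Theorem~\ref{weak-cp} the competitor appears as $(1-\vepsilon)\psi$, so to reuse it verbatim I would rescale: set $\psi':=\psi$ but absorb the constant so the theorem applies to the pair $(\vphi,\widetilde\psi)$ — or, more robustly, redo the one-line integration-by-parts argument of Theorem~\ref{weak-cp} directly with $\widetilde\psi$ in place of $(1-\vepsilon)\psi$, since nothing in that proof used the specific form of the competitor beyond boundedness and $(\omega,m)$-subharmonicity. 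Everything else is the routine binomial expansion plus positivity of mixed Hessian currents already established in Section~\ref{S1}.
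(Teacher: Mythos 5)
Your overall strategy is the intended one: the paper's proof is only a citation of \cite[Lemma~5.4]{KN1}, and that argument is exactly what you describe --- perturb the competitor by a multiple of an admissible $\rho$, apply the weak comparison principle (Theorem~\ref{weak-cp}) to the perturbed pair, and sandwich the resulting sublevel set between $U(\vepsilon,s)$ and $U(\vepsilon,s+t)$. However, there is a genuine gap in the step where you extract $t^m\,cap_{m,\omega}(U(\vepsilon,s))$. You expand $\omega_{\widetilde\psi}^m\wedge\omega^{n-m}$ binomially in powers of $dd^c\rho$ and claim the mixed terms $\omega_{(1-\vepsilon)\psi}^{m-k}\wedge(dd^c\rho)^k\wedge\omega^{n-m}$ are positive so the lower-order ones can be dropped. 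They are not: positivity in Remark~\ref{equi-two-defn} and Corollary~\ref{we-pro-1} is for wedge products of forms $\omega_{u_i}\in\overline{\Gamma_m(\omega)}$, whereas $dd^c\rho$ by itself lies in no positive cone. Moreover the term you would keep, $t^m(dd^c\rho)^m\wedge\omega^{n-m}$, is not the capacity integrand, which is $\omega_\rho^m\wedge\omega^{n-m}=(\omega+dd^c\rho)^m\wedge\omega^{n-m}$. The correct route is to write
\[
\omega_{\widetilde\psi}\;=\;\omega+(1-\vepsilon)dd^c\psi+t\,dd^c\rho\;=\;(1-\vepsilon)\,\omega_\psi+t\,\omega_\rho+(\vepsilon-t)\,\omega ,
\]
note that $t<\vepsilon_B\le\vepsilon^m/3<\vepsilon$ makes all three summands lie in $\overline{\Gamma_m(\omega)}$, and invoke the G\aa{}rding monotonicity \eqref{incre-s-m-form} to get $\omega_{\widetilde\psi}^m\wedge\omega^{n-m}\ge t^m\,\omega_\rho^m\wedge\omega^{n-m}$ directly; no binomial expansion and no ``$\rho$ close to $1$'' is needed.

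A second, smaller point: with your choice $\widetilde\psi=(1-\vepsilon)\psi+t\rho$ the infimum $\inf_X[\vphi-\widetilde\psi]$ lies in $[S(\vepsilon)-t,\,S(\vepsilon)]$, so the modified sublevel set at height $s+t$ (which is what you need in order to contain $U(\vepsilon,s)$) is only contained in $U(\vepsilon,s+2t)$, not $U(\vepsilon,s+t)$. This is harmless --- replace $t$ by $t/2$ at the cost of a factor $2^m$ in $C$ --- but the cleaner choice is the competitor $(1-\vepsilon)\psi+t(\rho-1)$, for which one checks directly $U(\vepsilon,s)\subseteq\{\vphi<(1-\vepsilon)\psi+t(\rho-1)+S(\vepsilon)+s+t\}\subseteq U(\vepsilon,s+t)$, and the same lower bound $t^m\omega_\rho^m\wedge\omega^{n-m}$ holds since adding the constant $-t$ does not change $dd^c$. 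With these two repairs your argument closes and coincides with the cited one.
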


\begin{proof}
See the arguments in  \cite[Lemma~5.4, Remark~5.5]{KN1} by using the above weak comparison principle (Theorem~\ref{weak-cp}).
\end{proof}

The preparations above were needed for the proof of a priori estimates for solutions to Hessian equations with the right hand side in $L^p , \ p>n/m.$ 
We follow the method from \cite{kolodziej98, kolodziej03}  with small variations.

\begin{lem} \label{vol-sub-level-property}  Under assumptions and notations of Lemma~\ref{capacity-growth-sublevel-sets}. Assume furthermore that 
\[
	\omega_\vphi^m \wedge \omega^{n-m} =f \omega^n
\]
for $f \in L^p(\omega^n)$, $p>n/m$. Fix  $0< \alpha < \frac{p -\frac{n}{m}}{p(n-m)}$. Then, there exists a constant $C_\alpha = C(\alpha, \omega)$ such that for any $0<s,t<\vepsilon_B$, 
\[
	t \left[V_\omega( U(\vepsilon, s))\right]^\frac{1}{m \tau} \leq
	C_\alpha \|f\|_p^\frac{1}{m} \left[ V_\omega(U(\vepsilon, s+t)) \right]^\frac{1 + m \alpha}{m \tau},
\]
where $\tau = \frac{(1+m \alpha)p}{p-1}<n/(n-m)$.
\end{lem}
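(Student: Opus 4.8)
The plan is to combine the capacity–volume inequality of Lemma~\ref{vol-sub-level-property}'s predecessor (Lemma~\ref{capacity-growth-sublevel-sets}) with the sharp volume–capacity estimate of Proposition~\ref{vol-cap-2}, interpolating the $L^p$-bound on $f$ by H\"older's inequality. First I would start from \eqref{cgss-eq}, which says that for $0<s,t<\vepsilon_B$,
\[
	t^m cap_{m,\omega}(U(\vepsilon,s)) \leq C \int_{U(\vepsilon,s+t)} \omega_\vphi^m \wedge \omega^{n-m} = C \int_{U(\vepsilon,s+t)} f\, \omega^n.
\]
To the right-hand side I apply H\"older's inequality with exponents $p$ and $p/(p-1)$, obtaining
\[
	\int_{U(\vepsilon,s+t)} f\, \omega^n \leq \|f\|_p \left[V_\omega(U(\vepsilon,s+t))\right]^{\frac{p-1}{p}}.
\]
This is the step where the precise choice of $\tau = \frac{(1+m\alpha)p}{p-1}$ enters: note that $\frac{p-1}{p} = \frac{1+m\alpha}{\tau}$, so the exponent on the right becomes exactly $\frac{1+m\alpha}{\tau}$, matching what appears in the desired conclusion after taking an $m$-th root.

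Next I would feed in Proposition~\ref{vol-cap-2} to convert the capacity on the left back into a volume. Choosing the parameter in that proposition to be our $\tau$ — which is legitimate since the hypothesis $\alpha < \frac{p-n/m}{p(n-m)}$ is exactly engineered to guarantee $1 < \tau < n/(n-m)$ (one checks $\tau < n/(n-m)$ unwinds to this bound on $\alpha$, and $\tau>1$ is immediate since $m\alpha>0$) — we get
\[
	V_\omega(U(\vepsilon,s)) \leq C\left[cap_{m,\omega}(U(\vepsilon,s))\right]^{\tau},
\]
i.e. $cap_{m,\omega}(U(\vepsilon,s)) \geq C^{-1/\tau}\left[V_\omega(U(\vepsilon,s))\right]^{1/\tau}$. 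Substituting this lower bound for the capacity into the inequality from \eqref{cgss-eq} yields
\[
	t^m \left[V_\omega(U(\vepsilon,s))\right]^{1/\tau} \leq C\, \|f\|_p \left[V_\omega(U(\vepsilon,s+t))\right]^{\frac{1+m\alpha}{\tau}}.
\]
Finally, raising both sides to the power $1/m$ gives
\[
	t\left[V_\omega(U(\vepsilon,s))\right]^{\frac{1}{m\tau}} \leq C_\alpha\, \|f\|_p^{1/m} \left[V_\omega(U(\vepsilon,s+t))\right]^{\frac{1+m\alpha}{m\tau}},
\]
which is the claim, with $C_\alpha$ absorbing the constants from Lemma~\ref{capacity-growth-sublevel-sets} and Proposition~\ref{vol-cap-2} (both depending only on $\alpha$, $X$, $\omega$, $n$, $m$).

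I do not expect a serious obstacle here — the argument is a bookkeeping exercise in chaining three already-established inequalities. The one point demanding care is the verification that the admissible range $0 < \alpha < \frac{p-\frac{n}{m}}{p(n-m)}$ really does force $\tau < \frac{n}{n-m}$, so that Proposition~\ref{vol-cap-2} applies with exponent $\tau$; this is a short algebraic manipulation, but it is the hinge on which the whole lemma turns, since it is what lets us use the \emph{sharp} volume–capacity exponent. A secondary minor point is making sure the constraints $0<s,t<\vepsilon_B$ are simply inherited from Lemma~\ref{capacity-growth-sublevel-sets} and that nothing in the H\"older step or Proposition~\ref{vol-cap-2} imposes further restrictions on $s,t$ — it does not, since those two results hold for all Borel sets.
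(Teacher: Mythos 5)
Your proposal is correct and follows essentially the same route as the paper: chain Lemma~\ref{capacity-growth-sublevel-sets} with H\"older's inequality and Proposition~\ref{vol-cap-2} applied with exponent $\tau$, after checking that $0<\alpha<\frac{p-\frac{n}{m}}{p(n-m)}$ is equivalent to $\frac{p}{p-1}<\tau<\frac{n}{n-m}$. The algebraic identity $\frac{p-1}{p}=\frac{1+m\alpha}{\tau}$ and the final $m$-th root are exactly as in the paper's argument.
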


\begin{proof}
It is elementary that  
\begin{equation}\label{tau} 
	0< \alpha < \frac{p -\frac{n}{m}}{p(n-m)} \Leftrightarrow
	\frac{p}{p-1}< \tau=  \frac{(1+m \alpha)p}{p-1}<\frac{n}{n-m}.
\end{equation}
By the volume-capacity inequality (Proposition~\ref{vol-cap-2}) and Lemma~\ref{capacity-growth-sublevel-sets} we have 
\[
	t^m \left[V_\omega(U(\vepsilon,s))\right]^\frac{1}{\tau}
\leq  C_\alpha \;t^m \; cap_{m,\omega}(U(\vepsilon,s)) 
\leq C_\alpha \cdot C \int_{U(\vepsilon, s+t)} f \omega^n.
\]
The H\"older inequality implies that
\[
	t^m \left[V_\omega(U(\vepsilon,s))\right]^\frac{1}{\tau}
\leq C_\alpha  \|f\|_p \left[V_\omega(U(\vepsilon, s+t))\right]^\frac{p-1}{p}.
\]
Taking  $m-$th root of  both sides   and plugging the value of $\tau$ we get the desired inequality.
\end{proof}

Thanks to this lemma we get a uniform estimate for the solution of Hessian equations with $L^p , \ p>n/m$ control of  the right hand side.

\begin{thm} \label{a-priori-estimate} Fix $0<\vepsilon<3/4$ and $\vepsilon_B := \frac{1}{3} \min\{\vepsilon^m, \frac{\vepsilon^3}{16B}\}$. Let  $\vphi, \psi \in \cA_m(\omega) \cap C(X)$ satisfy $-1\leq \psi \leq 0$ and $\vphi \leq 0$. Assume that \[\omega_\vphi^m \wedge \omega^{n-m} = f \omega^n\] with $f \in L^p(\omega^n), p>n/m$. Put \[U(\vepsilon, s) = \{\vphi<(1-\vepsilon) \psi + \inf_X [\vphi -(1-\vepsilon) \psi] +s \},\] 
and fix $0< \alpha < \frac{p -\frac{n}{m}}{p(n-m)}$. Then, there exists a contant $C_\alpha = C(\alpha, \omega)$ such that for $0<s<\vepsilon_B$, 
\[
	s\leq 4 C_\alpha \|f\|_p^\frac{1}{m} 
	\left[V_\omega(U(\vepsilon,s)) \right]^\frac{\alpha}{\tau},
\]
where $\tau= \frac{(1+m \alpha)p}{p-1}$.
\end{thm}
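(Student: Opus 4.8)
The plan is to run the classical Ko\l odziej iteration scheme, now fed by the single-step inequality of Lemma~\ref{vol-sub-level-property}. Write $b(s) := V_\omega(U(\vepsilon,s))^{1/(m\tau)}$, which is a nondecreasing, right-continuous function of $s$ on $(0,\vepsilon_B)$, and set $H(s) := s$. Lemma~\ref{vol-sub-level-property} says precisely that for all $0<s,t<\vepsilon_B$ with $s+t<\vepsilon_B$,
\[
	t\, b(s) \leq C_\alpha \|f\|_p^{1/m}\, b(s+t)^{1+m\alpha}.
\]
Because $\alpha>0$, the exponent $1+m\alpha$ is strictly larger than $1$, which is exactly the superlinearity needed to make the iteration converge. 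First I would reduce to a clean abstract statement: if $b:(0,S_0)\to[0,\infty)$ is nondecreasing and satisfies $t\,b(s)\le A\,b(s+t)^{1+\beta}$ for some $\beta>0$ and all admissible $s,t$, then $b$ must vanish on an initial interval, and more precisely one gets a bound of the form $s\le C A\, b(s)^{\beta}$ on that interval. This is Lemma 2.3/2.4-type material in \cite{kolodziej98, kolodziej03}; I would cite it or reprove it in two lines.

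The key steps, in order: (i) Observe that $b$ is bounded above (by $V_\omega(X)^{1/(m\tau)}$), so if $b(s_*)>0$ for some $s_*$ then $b$ is bounded below by a positive constant for $s\ge s_*$. (ii) Run the De Giorgi-type descent: define $s_j$ decreasing with $s_j - s_{j+1} = t_j$ chosen so that $t_j b(s_{j+1}) \le \tfrac12 b(s_j)$ — possible because of the strict superlinearity, taking $t_j$ comparable to $b(s_{j+1})^{m\alpha}$ up to the constant $C_\alpha\|f\|_p^{1/m}$; then $b(s_j)\to 0$ geometrically while $\sum t_j$ stays controlled by a geometric series whose sum is $\lesssim C_\alpha\|f\|_p^{1/m} b(s_0)^{m\alpha}$. (iii) Conclude that for $s_0$ in the range where $\sum_{j\ge 0} t_j < s_0$ one forces $b \equiv 0$ below a threshold, equivalently $s \le (\text{const})\cdot C_\alpha\|f\|_p^{1/m}\,b(s)^{m\alpha}$; writing $b(s)^{m\alpha} = V_\omega(U(\vepsilon,s))^{\alpha/\tau}$ gives exactly the claimed
\[
	s \le 4 C_\alpha \|f\|_p^{1/m}\, \bigl[V_\omega(U(\vepsilon,s))\bigr]^{\alpha/\tau},
\]
after absorbing the geometric-series constant into the factor $4$ (or into $C_\alpha$, which we are free to enlarge).

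The main obstacle is purely bookkeeping rather than conceptual: one must be careful that the iteration stays inside the admissible window $0<s,t<\vepsilon_B$ — i.e., the partial sums $\sum_j t_j$ do not exhaust $\vepsilon_B$ before the descent terminates — and that the resulting constant depends only on $\alpha$ and $\omega$ (through $C_\alpha$ and $\vepsilon_B$, which in turn depends on $\vepsilon$ and $B$) and not on $f$ beyond the stated $\|f\|_p^{1/m}$ factor. The choice $t_j \sim b(s_{j+1})^{m\alpha}$ must be made so the step lengths themselves are summable; here the monotone decay $b(s_{j+1})\le \rho^{j}b(s_0)$ for some $\rho<1$ does the job, and $\sum_j \rho^{jm\alpha}<\infty$. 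The factor $4$ in the statement is the telltale sign that the authors simply want $\sum_{j\ge0}\rho^{jm\alpha}\le 2$ or similar after an appropriate normalization, so I would pick the halving rate accordingly. No step here uses anything beyond Lemma~\ref{vol-sub-level-property} and elementary real analysis.
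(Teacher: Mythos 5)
Your proposal is correct and follows essentially the same route as the paper: the paper likewise sets $a(s)=[V_\omega(U(\vepsilon,s))]^{1/(m\tau)}$, feeds the single-step inequality of Lemma~\ref{vol-sub-level-property} into a halving descent $s=s_0>s_1>\cdots\searrow 0$ defined by $s_i=\sup\{x\le s_{i-1}:a(s_{i-1})\ge 2a(x)\}$, and sums the resulting geometric series to get $s=\sum_i(s_{i-1}-s_i)\le \frac{2^{1+m\alpha}}{2^{m\alpha}-1}\,C_\alpha\|f\|_p^{1/m}[a(s)]^{m\alpha}$. The only cosmetic differences are that the paper telescopes directly rather than arguing via a ``$b\equiv 0$ below a threshold'' contradiction, and that it uses the left-continuity of $a$ (not right-continuity, as you wrote) together with $a(0)=0$ and $a(x)>0$ for $x>0$ to guarantee $s_i\searrow 0$.
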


\begin{proof} 
First, for $0< \alpha < \frac{p -\frac{n}{m}}{p(n-m)}$ we define 
\[
	a(s):= [V_\omega(U(\vepsilon, s))]^\frac{1}{m\tau}, \quad 
	C := C_\alpha \|f\|_p^\frac{1}{m}.
\] 
It follows from Lemma~\ref{vol-sub-level-property} that for any $0< s,t < \vepsilon_B$,
\begin{equation} \label{ape-eq1}
	t a(s) \leq C \left[a(s+t)\right]^{1+m\alpha}.
\end{equation}
The function $a(x)$ satisfies
\begin{equation} \label{ape-eq2}
	\lim_{x \to s^-} a(x) = a(s) \quad \mbox{and}
	\lim_{x \to s^+} a(x)=:a(s^+) \geq a(s).
\end{equation}
To finish the proof, we shall show that for any $0< s < \vepsilon_B$
\[
	s \leq \frac{2^{1+ m \alpha}}{2^{m \alpha} -1} \cdot  C [a(s)]^{m \alpha}.
\]
The argument  is  similar to the proof of \cite[Theorem~5.3]{KN1}, however  here it  is simpler, so we include the proof for the sake of completeness.

Fix $s_0:=s \in (0, \vepsilon_B)$. Let us define by induction the sequence $s_i, i \geq 1$ as follows.
\begin{equation} \label{ape-eq4}
	s_i := \sup \{0 \leq x \leq s_{i-1} : a(s_{i-1}) \geq 2 a(x)\}.
\end{equation}
Since $a(0) =0$ and $a(x)>0$ for $x>0$, it follows from the first equality in \eqref{ape-eq2} that 
\[
	s_0 > s_1 > \cdots > s_i \searrow 0 \quad \mbox{as } i \to +\infty. 
\]
(If $a(0^+)>0$, then $s_N = s_{N+1} = \cdots =0$ for some $1\leq N < +\infty$.) By \eqref{ape-eq2} and the definition \eqref{ape-eq4} we get that
\[
	2 a(s_{i}) \leq a(s_{i-1}) \leq 2 a(s_i^+).
\]
Hence, by \eqref{ape-eq1},
\[
	s_{i-1} - s_i = \lim_{x \to s_i^+} (s_{i-1} -x) \leq C [a(s_{i-1})]^{1+m\alpha}/a(s_i^+).
\]
It follows that 
\begin{align*}
	s_{i-1} - s_i \leq 2C [a(s_{i-1})]^{m\alpha} 
&	\leq 2C (1/2^{m\alpha}) [a(s_{i-2})]^{m \alpha}  \\
&	\leq \cdots \leq \\
&	\leq 2C (1/2^{m\alpha})^{i-1} [a(s_0)]^{m \alpha}.
\end{align*}
Thus,
\begin{align*}
	s = \sum_{i =1}^\infty (s_{i-1} - s_i) 
&	\leq 2^{1+ m\alpha} C\sum_{i=1}^\infty (1/2^{m\alpha})^i [a(s_0)]^{m \alpha} \\
&	=	\frac{2^{1+ m\alpha}C}{2^{m \alpha} -1} [a(s)]^{m \alpha} .
\end{align*}
This completes the proof.
\end{proof}

From the statement of Theorem~\ref{a-priori-estimate}, we can derive the uniform estimate by taking $\vepsilon =1/2$ and $\psi =0$ and combining it  with the estimate of the decay of volume of sublevel set
 (Corollary~\ref{decay-vol-sub-sets}). Thus we get that if $\omega_\vphi^m\wedge \omega^{n-m} = f\omega^n$ with $0\leq f \in L^p(\omega^n)$, $p>n/m$ and $\vphi$ is normalized by 
$\sup_X \vphi = -1$, then for any $0< s < \vepsilon_B$
\[
	s \leq \frac{C_\alpha \|f\|_p^\frac{1}{m}}{ \left|- \inf_X \vphi - s\right|^\frac{(p-1)\alpha}{p (1+ m \alpha)}},
\]
where $0< \alpha < \frac{p -\frac{n}{m}}{p(n-m)}$ is fixed. It  leads to 
\begin{equation} \label{uniform-est}
	\|\vphi\|_\infty \leq C \|f\|_p^{\frac{1}{m} \cdot \frac{p (1+ m \alpha)}{(p-1) \alpha}},
\end{equation}
where $C = C(\alpha, p, \omega, X)$. Note that here we have used the fact that there exists a uniform lower bound for $\|f\|_p$ similar to the one in \cite{KN1, KN2}. Though this case is simpler. Indeed, it follows from Theorem~\ref{a-priori-estimate} that for $s = \vepsilon_B/2$, 
\[
	\|f\|_p^\frac{1}{m} \geq	\frac{\vepsilon_B}{8 C_\alpha [V_\omega(X)]^\frac{\alpha}{\tau}}.
\]
This gives an explicit bound.

\subsection{Existence of weak solutions and stability}

The existence of weak solutions to the Monge-Amp\`ere equations on compact Hermitian manifold has been obtained recently in \cite{KN1} where the technique is quite different from \cite{kolodziej05}. We will adapt those techniques to the Hessian equation.

Let us start with a quantitative version of \cite[Corollary~5.10]{KN1} (see also \cite[Theorem~3.1]{dinew-kolodziej14} for the similar result in the K\"ahler case).

\begin{thm} \label{stab-l-r}
Let $u, v \in \cA_m(\omega) \cap C(X)$ be such that $\sup_X u =0$ and $v\leq 0 $. Suppose that $\omega_u^m \wedge \omega^{n-m} = f \omega^n$, where $f\in L^p(\omega^n), p>n/m$. Fix $0< \alpha < \frac{p -\frac{n}{m}}{p(n-m)}$. Then,
\[
	\sup_X(v - u) \leq C \|(v-u)_+\|_1^{1/ ap^*},
\]
where the constant $a = 1/p^* + m(m+2)+(m+2)/\alpha$, and  $C$ depends only on $\alpha, p, \omega, \|f\|_p$ and $\|v\|_\infty$.
\end{thm}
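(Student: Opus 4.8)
Following the stability scheme of \cite{kolodziej98} (see also \cite{KN1,KN2} for the case $m=n$ on Hermitian manifolds), the plan is to feed the a priori estimate of Theorem~\ref{a-priori-estimate} the sublevel sets of $u-(1-\vepsilon)v$ near its minimum, bound their volume by $\|(v-u)_+\|_1$ via Chebyshev's inequality, and then optimize in the two parameters. Write $M:=\sup_X(v-u)$; we may assume $M>0$. First I would dispose of the case $M>M_0$, where $M_0>0$ is a small threshold depending only on $n,m,\omega,\|f\|_p,\|v\|_\infty$: on one hand $\sup_X u=0$, $v\le 0$ and the uniform estimate \eqref{uniform-est} give $M\le\|u\|_\infty\le C(\|f\|_p)$; on the other hand, running the computation below with $\vepsilon$ and $s$ frozen as functions of $M_0$ alone forces $\|(v-u)_+\|_1\ge c_0>0$ in this regime, and the two bounds combine to yield the asserted inequality with a larger constant. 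So we may assume $M\le M_0$; likewise we may assume $\|v\|_\infty\ge1$, since if $\|v\|_\infty<1$ then $-1\le v\le 0$ and Theorem~\ref{a-priori-estimate} applies directly with $\psi=v$.

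For $0<\vepsilon<3/4$ put $S(\vepsilon)=\inf_X[u-(1-\vepsilon)v]$ and $U(\vepsilon,s)=\{u<(1-\vepsilon)v+S(\vepsilon)+s\}$ as in Theorem~\ref{a-priori-estimate}. Since $v\le0$ and $\inf_X(u-v)=-M$, one has $-M-\vepsilon\|v\|_\infty\le S(\vepsilon)\le -M$, hence on $U(\vepsilon,s)$ we get $u-v<-M+s+\vepsilon\|v\|_\infty$, that is
\[
	U(\vepsilon,s)\subseteq\{v-u>M-s-\vepsilon\|v\|_\infty\}.
\]
By Chebyshev's inequality, for $0<t<M$,
\[
	V_\omega(\{v-u>t\})\le\frac1t\int_{\{v-u>t\}}(v-u)\,\omega^n\le\frac{\|(v-u)_+\|_1}{t}.
\]
Applying Theorem~\ref{a-priori-estimate} with $\vphi=u$ and $\psi=v$ gives, for $0<s<\vepsilon_B=\tfrac13\min\{\vepsilon^m,\vepsilon^3/16B\}$,
\[
	s\le 4C_\alpha\|f\|_p^{1/m}\,[V_\omega(U(\vepsilon,s))]^{\alpha/\tau},\qquad \tau=\tfrac{(1+m\alpha)p}{p-1}.
\]

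Now choose $\vepsilon:=M/(4\|v\|_\infty)$, so $\vepsilon\le1/2$ and $\vepsilon\|v\|_\infty=M/4$, and $s:=\tfrac12\vepsilon_B$. Using $\tfrac13\vepsilon^m\ge\vepsilon_B\ge c\,\vepsilon^{m+2}$ and $M\le M_0$ (small), one checks $c'M^{m+2}\le s\le M/4$, with $c,c'>0$ depending only on $m,B,\|v\|_\infty$. Then $M-s-\vepsilon\|v\|_\infty\ge M/2$, so by the two displays above $V_\omega(U(\vepsilon,s))\le 2\|(v-u)_+\|_1/M$, and the a priori estimate becomes
\[
	c'M^{m+2}\le 4C_\alpha\|f\|_p^{1/m}\Big(\frac{2\|(v-u)_+\|_1}{M}\Big)^{\alpha/\tau}.
\]
Rearranging gives $M^{\,m+2+\alpha/\tau}\le C\,\|(v-u)_+\|_1^{\alpha/\tau}$, whence $M\le C\,\|(v-u)_+\|_1^{\frac{\alpha/\tau}{m+2+\alpha/\tau}}$; and since $\tau=(1+m\alpha)p^*$, a direct computation gives $\frac{m+2}{\alpha/\tau}+1=\frac{(m+2)p^*}{\alpha}+m(m+2)p^*+1=ap^*$, so the exponent is $1/(ap^*)$ with $a=1/p^*+m(m+2)+(m+2)/\alpha$, as claimed.

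The step requiring the most care is the invocation of Theorem~\ref{a-priori-estimate} with $\psi=v$: that theorem is stated for $\psi$ normalized by $-1\le\psi\le0$, and we cannot rescale $v$ into $[-1,0]$ without destroying the containment in the first display. One must therefore verify that the proof of Theorem~\ref{a-priori-estimate} — together with the weak comparison principle (Theorem~\ref{weak-cp}), Lemma~\ref{capacity-growth-sublevel-sets} and Lemma~\ref{vol-sub-level-property} on which it rests — remains valid for an arbitrary bounded $\psi\le0$, the constants then picking up a dependence on $\|v\|_\infty$ (which is why $\|v\|_\infty$ appears among the parameters of $C$). Everything else — the dichotomy on $M$, producing the lower bound $c_0$ in the regime $M>M_0$, and keeping the constraints $\vepsilon<3/4$, $s<\vepsilon_B$, $s\ll M$ simultaneously true — is routine bookkeeping.
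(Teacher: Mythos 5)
Your argument is correct and lands on exactly the right exponent, but it chains the key inequalities differently from the paper. The paper does not apply Theorem~\ref{a-priori-estimate} as a black box to the sublevel set; it writes $\hbar(t)\le [V_\omega(U(\vepsilon,t))]^{1/\tau}\le C\, cap_{m,\omega}(U(\vepsilon,t))$ (the first inequality from Theorem~\ref{a-priori-estimate}, the second from Proposition~\ref{vol-cap-2}) and then bounds the capacity from above by Lemma~\ref{capacity-growth-sublevel-sets} together with H\"older's inequality applied to $\int_{U(\vepsilon,2t)}f\,\omega^n$, using that $(v-u)_+\ge |S|-\vepsilon-2t$ on $U(\vepsilon,2t)$; that is where $\|(v-u)_+\|_1^{1/p^*}$ enters. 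You instead use Theorem~\ref{a-priori-estimate} directly as a lower bound for the volume and control the volume by Chebyshev. Since the H\"older step is already built into Theorem~\ref{a-priori-estimate} through Lemma~\ref{vol-sub-level-property}, the two routes are arithmetically equivalent, and your verification that $\tau(m+2)/\alpha+1=ap^*$ is correct, so both give the exponent $1/(ap^*)$. Your version is arguably tidier (one application of the a priori estimate plus Chebyshev, no explicit capacity), and your dichotomy $M\le M_0$ versus $M>M_0$ plays the role of the paper's device of assuming $\|(v-u)_+\|_1\le\vepsilon^{ap^*}$ from the outset.

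The one genuine loose end is the one you flag yourself: Theorem~\ref{a-priori-estimate} and Lemma~\ref{capacity-growth-sublevel-sets} are stated for $-1\le\psi\le0$, and you invoke them with $\psi=v$ of arbitrary bounded size. The paper sidesteps this by normalizing $\|u\|_\infty,\|v\|_\infty\le1$ at the very start (stated as ``after a rescaling'', with $\|u\|_\infty$ already controlled by \eqref{uniform-est}); you reject rescaling and instead assert that the supporting lemmas extend to bounded $\psi\le0$ with constants depending on $\|v\|_\infty$. That extension is believable --- the bound $\psi\ge-1$ is only used in the proof of Lemma~\ref{capacity-growth-sublevel-sets} to absorb terms of the form $t(\rho-1)$ against $t\psi$, and with $\psi\ge-\|v\|_\infty$ the same argument runs after shrinking $t$ by a factor of $\|v\|_\infty$ --- but as written it is an unverified claim on which your entire main estimate rests, and since $\|v\|_\infty$ then propagates into $\vepsilon_B$ and hence into your choice $s=\vepsilon_B/2$ and the constant $c'$ in $s\ge c'M^{m+2}$, you should either carry the modification through explicitly (the final exponent is unaffected) or simply adopt the paper's normalization and avoid the issue altogether.
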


\begin{proof} 
By the uniform estimate \eqref{uniform-est} $\|u\|_\infty$ is controlled by $\|f\|_p$. After a rescaling we may assume that  $\|u\|_\infty, \|v\|_\infty \leq 1$. We wish to estimate
 $-S:= \sup_X (v - u) > 0$ in terms of $\|(v - u)_+\|_1$ as in the K\"ahler case \cite{kolodziej03}. Suppose that 
\begin{equation} \label{slr-eq2} \|(v - u)_+\|_1 \leq \vepsilon^{a p^*} 
\end{equation}
for $0< \vepsilon << 3/4$ and $a>0$ (to be determined later).
Let  
\[ \hbar(s) := (s/4C_\alpha \|f\|_p^\frac{1}{m})^\frac{1}{\alpha} \]  be the inverse function of $4C_\alpha \|f\|_p^\frac{1}{m}  s^\alpha$ in Theorem~\ref{a-priori-estimate}.  Consider sublevel sets $U(\vepsilon, t) = \{u< (1-\vepsilon) v + S_\vepsilon +t \}$, where $S_\vepsilon = \inf_X [u -(1-\vepsilon)v]$. It is clear that \[ S - \vepsilon \leq S_\vepsilon \leq S.\] 
Therefore, $U(\vepsilon,2t) \subset \{u < v + S+ \vepsilon +2t\}$. Then, $(v - u)_+ \geq |S| - \vepsilon -2t>0$ for $0< t < \vepsilon_B$ and $0< \vepsilon < |S|/2$ on the latter set (if $|S| \leq 2 \vepsilon$ then we are done).

By Lemma~\ref{capacity-growth-sublevel-sets} and the H\"older inequality, we have
\begin{align*}
	cap_{m,\omega}(U(\vepsilon,t)) 
	\leq \frac{C}{t^m} \int_{U(\vepsilon,2t)} f \omega^n 
&	\leq \frac{C}{t^m} \int_X \frac{(v -u)_+^{1/p^*}}{(|S| - \vepsilon -2t)^{1/p^*}}
		f \omega^m\\
&	\leq \frac{C \|f\|_p}{t^m (|S| - \vepsilon -2t)^{1/p^*}} \|(v - u)_+\|_1^{1/p^*}.
\end{align*}
Moreover, by Theorem~\ref{a-priori-estimate} \[\hbar(t) \leq[ V_\omega(U(\vepsilon,t))]^\frac{1}{\tau} \leq  C \; cap_{m,\omega}(U(\vepsilon,t)),\] where $\tau = (1+m\alpha) p^*$ and $C$ also depends  on $\alpha$. Combining these inequalites, we obtain
\[
	(|S| - \vepsilon - 2t)^{1/p^*} 
	\leq \frac{C \|f\|_p}{t^m \hbar(t)} \|(v - u)_+\|_1^{1/p^*}.
\]
Therefore, using \eqref{slr-eq2},
\begin{align*}
|S| 
&	\leq \vepsilon + 2t 
	+ \left(\frac{C \|f\|_p}{t^m \hbar(t)} \right)^{p^*}\|(v - u)_+\|_1 \\
&	\leq 3 \vepsilon + \left(\frac{C \|f\|_p \vepsilon^a}{t^m \hbar(t)} \right)^{p^*} .
\end{align*}
Recall that $\vepsilon_B = \frac{1}{3} \min\{\vepsilon^m, \frac{\vepsilon^3}{16B}\}$. So, taking 
$
	t = \vepsilon_B/2 \geq \vepsilon^{m+2} $
we have
\[
	\hbar(t) = \left(\frac{t}{4C_\alpha \|f\|_p^\frac{1}{m}}\right)^{1/\alpha} 
	\geq \frac{C \vepsilon^{(m+2)/\alpha}}{\|f\|_p^\frac{1}{m\alpha}}.
\]
If we choose $a = 1/p^* + m(m+2)+(m+2)/\alpha$, then 
\[\left(\vepsilon^{a}/\vepsilon^{m(m+2)+ (m+2)/\alpha}\right)^{p^*} =\vepsilon.\] 
Hence  $|S| \leq C \vepsilon$ with $C = C(\alpha, p, \omega, \|f\|_p)$. Thus,
\[
	\sup_X(v -u) \leq C \|(v - u)_+\|_1^{1/ap^*}.
\]
This is  the stability estimate we wished to show. 
\end{proof}

Applying the above theorem twice we get the symmetric (with respect to $u$ and $v$) form of this result.

\begin{cor} \label{stabilit1} Fix $\alpha>0$ and $a>0$ as in Theorem~\ref{stab-l-r}. Suppose that $u, v \in \cA_m(\omega) \cap C(X)$, normalized $\sup_X u = \sup_X v =0$, satisfy 
\[
	\omega_u^m \wedge \omega^{n-m}= f \omega^n, \quad
	\omega_v^m \wedge \omega^{n-m}= g \omega^n,
\]
where $0 \leq f, g \in L^p(\omega^n)$, $p>n/m$. Then, 
\[
	\|u - v\|_\infty \leq C \|u -v\|_1^{1/ap^*},
\]
where $C = C(\alpha, p, \|f\|_p, \|g\|_p, X, \omega)>0$.
\end{cor}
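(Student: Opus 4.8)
The plan is to deduce Corollary~\ref{stabilit1} by applying Theorem~\ref{stab-l-r} twice, once with the roles of $u$ and $v$ as in that theorem and once with them exchanged, and then combining the two one-sided estimates into a symmetric $L^\infty$ bound. First I would record that, after the normalization $\sup_X u = \sup_X v = 0$, the uniform estimate \eqref{uniform-est} guarantees $\|u\|_\infty$ and $\|v\|_\infty$ are bounded in terms of $\|f\|_p$ and $\|g\|_p$ respectively; in particular both are $\le C$ for a constant depending only on the stated data, so the hypotheses of Theorem~\ref{stab-l-r} (where one function has supremum $0$ and the other is bounded above by $0$) are available in both orderings.

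Next I would apply Theorem~\ref{stab-l-r} with $u$ playing the role of the function whose Hessian measure is $f\omega^n$ and $v$ the comparison function: since $\sup_X u = 0$ and $v \le 0$, it yields
\[
	\sup_X(v-u) \le C \|(v-u)_+\|_1^{1/ap^*},
\]
with $C$ depending on $\alpha,p,\omega,\|f\|_p,\|v\|_\infty$, hence on the allowed data. Then I would interchange the roles, using $\omega_v^m\wedge\omega^{n-m}=g\omega^n$, $\sup_X v = 0$ and $u\le 0$, to obtain
\[
	\sup_X(u-v) \le C' \|(u-v)_+\|_1^{1/ap^*}.
\]
Since $\|(v-u)_+\|_1 \le \|u-v\|_1$ and $\|(u-v)_+\|_1 \le \|u-v\|_1$, and since $\|u-v\|_\infty = \max\{\sup_X(u-v),\ \sup_X(v-u)\}$, adding (or taking the maximum of) the two inequalities gives
\[
	\|u-v\|_\infty \le (C+C')\,\|u-v\|_1^{1/ap^*},
\]
which is the desired estimate with a constant depending only on $\alpha,p,\|f\|_p,\|g\|_p,X,\omega$.

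The only mild subtlety — and the step I would be most careful about — is checking that the normalizations match: Theorem~\ref{stab-l-r} demands the function with prescribed density to have supremum exactly $0$ and the other to be $\le 0$, whereas the corollary normalizes both to have supremum $0$. The condition $\sup_X v = 0$ is stronger than $v\le 0$, so this direction is immediate; for the reversed application the same remark applies with $u$ and $v$ swapped. (If one preferred to state the corollary under a weaker normalization, one would first subtract the suprema and absorb the resulting constants via the a priori bound \eqref{uniform-est}.) No new analytic input is needed beyond Theorem~\ref{stab-l-r} and the uniform estimate; the argument is purely a symmetrization.
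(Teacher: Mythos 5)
Your argument is exactly the one the paper intends: the authors' entire "proof" of Corollary~\ref{stabilit1} is the sentence "Applying the above theorem twice we get the symmetric form of this result," and you have simply written out that double application, including the correct checks that the normalizations $\sup_X u=\sup_X v=0$ supply both hypotheses of Theorem~\ref{stab-l-r} in each ordering and that $\|u\|_\infty,\|v\|_\infty$ are absorbed into the constant via \eqref{uniform-est}. The proposal is correct and takes essentially the same approach as the paper.
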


On compact non-K\"ahler manifolds  we can only expect to solve the Hessian equation up to multiplicative constant on the right hand side.
One needs to know that those constants stay bounded as long as  the given functions on the right hand side are bounded in $L^p$.

\begin{lem} \label{constant-bound} Suppose that $u\in SH_m(\omega) \cap C^\infty(X)$ satisfies
\[
	\omega_u^m \wedge \omega^{n-m} = c\,f \omega^n, 
\]
where $f \in L^p(\omega^n)$, $p>n/m$, and $\int_X f \omega^n>0$. Then, 
\[
	 c_{min} \leq c \leq 1/c_{min}
\]
for a uniform constant $c_{min} = C(\|f\|_p, \|f^{1/m}\|_1, X, \omega)>0$.
\end{lem}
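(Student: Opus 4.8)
The plan is to derive the two inequalities separately by integrating the equation and pairing it with an appropriate test form, then invoking the a priori estimate \eqref{uniform-est}. For the \emph{upper} bound $c \leq 1/c_{min}$, I would first normalize $u$ by $\sup_X u = 0$ (subtracting a constant does not change $\omega_u$). Integrating the equation gives $c \int_X f\,\omega^n = \int_X \omega_u^m \wedge \omega^{n-m}$, so it suffices to bound the right-hand side from above. Since $\|u\|_\infty$ is controlled by $\|f^{1/m}\|_1$ via a suitable version of the uniform estimate (the bound \eqref{uniform-est} uses $\|f\|_p$, but one can instead run Theorem~\ref{a-priori-estimate} keeping track of the dependence, or simply use that $\|f\|_p$ itself bounds $\int_X \omega_u^m\wedge\omega^{n-m}$ through the CLN-type estimate $\int_X \omega_u^m\wedge\omega^{n-m} \leq C(1+\|u\|_\infty)^m$ after expanding $\omega_u = \omega + dd^c u$ and integrating by parts against the torsion terms as in Proposition~\ref{cln-ineq-local}). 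Combined with the lower bound $\int_X f\,\omega^n > 0$, this yields $c \leq C$.

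For the \emph{lower} bound $c \geq c_{min}$, the idea is to test the equation against a fixed auxiliary $(\omega,m)$-subharmonic function rather than against $\omega^{n-m}$ directly, in order to bring in the $L^1$ norm of $f^{1/m}$. Concretely, using the mixed-form (Maclaurin-type) inequality — the same one invoked in the proof of Proposition~\ref{vol-cap-2}, namely $\omega_u^m\wedge\omega^{n-m} \geq (\text{something})$ in terms of products — one gets a pointwise comparison of the form $\omega_u \wedge \omega^{n-1} \geq (c\,f)^{1/m}\,\omega^n$ up to a dimensional constant. Integrating this inequality over $X$ and integrating by parts kills the $dd^c u$ term modulo torsion: $\int_X \omega_u \wedge \omega^{n-1} = \int_X \omega^n + \int_X dd^c u \wedge \omega^{n-1} = \int_X \omega^n + \int_X u\, dd^c(\omega^{n-1})$, and the last term is bounded by $C\|u\|_\infty \leq C'$ (again controlled through the uniform estimate). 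Hence $c^{1/m}\|f^{1/m}\|_1 \leq C''$, which gives $c \geq c_{min}$ with $c_{min}$ depending on $\|f^{1/m}\|_1$, $X$, $\omega$.

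The main obstacle I anticipate is making the chain of estimates genuinely uniform in the right constants: one has to be careful that the uniform bound on $\|u\|_\infty$ does not secretly depend on $c$, and that the Maclaurin-type mixed inequality is available in the Hermitian, non-smooth-but-smooth-here setting (here $u \in C^\infty$, so this is fine — it is the inequality $\big(\omega_u^m\wedge\omega^{n-m}/\omega^n\big)^{1/m} \leq C_{n,m}\,\omega_u\wedge\omega^{n-1}/\omega^n$ for $\omega_u \in \overline{\Gamma_m(\omega)}$, which follows from G\aa rding's inequality \eqref{hom-ineq} applied to the eigenvalues, exactly as Lemma~\ref{form-s-m-1} was deduced). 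A secondary point is that to apply the a priori estimate \eqref{uniform-est} one technically needs $\omega_u^m\wedge\omega^{n-m} = \tilde f\,\omega^n$ with $\tilde f = cf \in L^p$; its $L^p$ norm involves $c$, so one should instead observe that the estimate \eqref{uniform-est}, run through Theorem~\ref{a-priori-estimate} with the explicit lower bound for $\|\tilde f\|_p$ noted right after it, produces a bound on $\|u\|_\infty$ that, after rearranging, also controls $c$ from above directly — so the upper bound on $c$ and the bound on $\|u\|_\infty$ should really be extracted simultaneously from that single a priori estimate.
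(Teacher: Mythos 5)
You have assembled the right ingredients (the mixed form inequality, the a priori estimate of Theorem~\ref{a-priori-estimate}, integration by parts against the torsion of $\omega$), but the directions of the two bounds are swapped, and as written neither bound is actually established. Your second paragraph ends with $c^{1/m}\|f^{1/m}\|_1 \leq C''$, which is an \emph{upper} bound $c \leq (C''/\|f^{1/m}\|_1)^m$, not the lower bound you claim. Likewise, the ``explicit lower bound for $\|\tilde f\|_p$'' noted after \eqref{uniform-est}, applied to $\tilde f = cf$, reads $c\|f\|_p \geq c_0(X,\omega)>0$ and therefore yields the \emph{lower} bound $c \geq c_0/\|f\|_p$; it cannot ``control $c$ from above'' as you assert in your last paragraph. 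Once the labels are corrected, these two observations are precisely the paper's proof (modelled on \cite[Lemma~5.9]{KN1}): the mixed form inequality gives $c\leq 1/c_{min}$ with $c_{min}$ depending on $\|f^{1/m}\|_1$, and the uniform lower bound on the $L^p$ norm of the right-hand side gives $c\geq c_{min}$ with $c_{min}$ depending on $\|f\|_p$. As written, you have two (partial) arguments for the upper bound and none for the lower bound.

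The second genuine problem is the circularity you notice but do not resolve. The estimate \eqref{uniform-est} bounds $\|u\|_\infty$ by a power of $\|cf\|_p = c\|f\|_p$ whose exponent $\frac{1}{m}\cdot\frac{p(1+m\alpha)}{(p-1)\alpha}$ exceeds $1$; feeding this into either $c\int_X f\omega^n \leq C(1+\|u\|_\infty)^m$ (your first paragraph) or into the error term $\int_X u\, dd^c(\omega^{n-1})$ (your second paragraph) produces an inequality of the shape $c^{1/m} \leq C(1+c^{\gamma})$ with $\gamma>1$, which says nothing for large $c$. The correct way to close the mixed-form argument is to normalize $\sup_X u=0$ and bound the error term by the $L^1$ norm rather than the sup norm: since $dd^c(\omega^{n-1})$ is a fixed smooth form dominated by $C\,\omega^n$, one has $\left|\int_X u\, dd^c(\omega^{n-1})\right| \leq C\int_X |u|\,\omega^n \leq C(X,\omega)$ by Lemma~\ref{l1-compactness}, with no reference to $\|u\|_\infty$ or to $c$. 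With that replacement your second paragraph becomes a correct proof of the upper bound, and the lower bound follows from the remark after \eqref{uniform-est} as indicated above.
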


\begin{proof} It is a consequence of mixed form type inequality and the a priori estimate in Theorem~\ref{a-priori-estimate}. The proof is similar as for  the Monge-Amp\`ere equation  \cite[Lemma~5.9]{KN1}.
\end{proof}

Thanks to the work of Sz\'ekelyhidi \cite{szekelyhidi15} and Zhang \cite{dzhang15},  the Hessian equation has a  smooth solution when the right hand side is smooth and positive. Using approximation procedure as in \cite{KN1}  and the stability (Corollary~\ref{stabilit1}) we get the following existence result. 
Note that the solution is obtained as a uniform limit of a sequence of smooth functions, therefore it automatically  belongs to $\cA_m(\omega)$.

\begin{thm}[existence]\label{existence-lp}
Let  $0 \leq f \in L^p(\omega^n), p>n/m$ satisfy $\int_X f \omega^n >0$. There exists $u\in \cA_m(\omega) \cap C(X)$ and a constant $c>0$ satisfying
\[
	(\omega+ dd^c u)^m\wedge \omega^{n-m} = c f \omega^n.
\]
\end{thm}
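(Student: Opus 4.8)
The plan is to approximate the $L^p$ data $f$ by smooth positive functions, solve the Hessian equation for each using the Sz\'ekelyhidi--Zhang theorem, and pass to the limit using the a priori estimates and the stability result established above. Concretely, I would first regularize: choose $f_j \in C^\infty(X)$ with $f_j > 0$, $f_j \to f$ in $L^p(\omega^n)$, and $\|f_j\|_p \leq \|f\|_p + 1$ for all $j$. One must be slightly careful to keep $\|f_j^{1/m}\|_1$ bounded below (and $\int_X f_j \omega^n$ bounded away from $0$), which is automatic since $f_j \to f$ in $L^p$ and $\int_X f\omega^n > 0$, so $\|f_j^{1/m}\|_1 \to \|f^{1/m}\|_1 > 0$ by dominated convergence along a subsequence where $f_j \to f$ pointwise a.e.

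\textbf{Solving the approximate equations.} By \cite{szekelyhidi15, dzhang15}, for each $j$ there is a smooth $(\omega,m)$-subharmonic function $u_j$, normalized by $\sup_X u_j = 0$, and a constant $c_j > 0$ with $(\omega + dd^c u_j)^m \wedge \omega^{n-m} = c_j f_j \omega^n$. By Lemma~\ref{constant-bound} the constants satisfy $c_{min} \leq c_j \leq 1/c_{min}$ uniformly in $j$, since $\|f_j\|_p$ and $\|f_j^{1/m}\|_1$ are uniformly controlled. Writing $(\omega + dd^c u_j)^m \wedge \omega^{n-m} = g_j \omega^n$ with $g_j = c_j f_j$, we have $\|g_j\|_p \leq c_{min}^{-1}(\|f\|_p + 1)$, so the uniform estimate \eqref{uniform-est} gives $\|u_j\|_\infty \leq C$ with $C$ independent of $j$.

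\textbf{Passing to the limit via stability.} Passing to a subsequence, $c_j \to c > 0$. Now I would show $\{u_j\}$ is Cauchy in $C(X)$. Apply Corollary~\ref{stabilit1} (or Theorem~\ref{stab-l-r}) to the pair $u_j, u_k$, whose Hessian measures are $c_j f_j \omega^n$ and $c_k f_k \omega^n$: this bounds $\|u_j - u_k\|_\infty$ by $C\|u_j - u_k\|_1^{1/ap^*}$, with $C$ depending only on the uniform $L^p$ bounds. So it suffices to show $\{u_j\}$ is Cauchy in $L^1(\omega^n)$; by Lemma~\ref{l1-compactness} the normalized family $\{u_j\}$ is relatively compact in $L^1$, and any two $L^1$-limit points $u, \tilde u \in SH_m(\omega)$ must satisfy $\omega_u^m \wedge \omega^{n-m} = c f \omega^n = \omega_{\tilde u}^m \wedge \omega^{n-m}$ in the sense of currents (using weak convergence of the Hessian measures $c_j f_j \omega^n \to c f \omega^n$, which holds since $f_j \to f$ in $L^p \subset L^1$ and $c_j \to c$), hence by stability again $u = \tilde u$; thus the whole sequence converges in $L^1$, and then in $C(X)$ by stability. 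The uniform limit $u$ is continuous, belongs to $\cA_m(\omega)$ since it is a uniform limit of smooth $(\omega,m)$-subharmonic functions, and satisfies $\omega_u^m \wedge \omega^{n-m} = c f \omega^n$ by Corollary~\ref{we-pro-1} (the Hessian operator is continuous under uniform convergence of continuous $(\omega,m)$-sh potentials).

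\textbf{The main obstacle} is the convergence of the Hessian measures and the identification of the limit equation: one needs that $\omega_{u_j}^m \wedge \omega^{n-m}$ converges weakly to $\omega_u^m \wedge \omega^{n-m}$, which is exactly the content of the Bedford--Taylor-type convergence (Proposition~\ref{wedge-prod} / Corollary~\ref{we-pro-1}) once we know the $u_j$ converge uniformly — but uniform convergence is itself obtained only from stability, which in turn requires control of $\|u_j - u_k\|_1$. Closing this loop cleanly — first extracting an $L^1$-convergent subsequence, identifying the limit equation weakly, invoking uniqueness from stability to upgrade to convergence of the full sequence, then bootstrapping to $C(X)$ — is the delicate point; everything else is a routine application of the estimates already in hand.
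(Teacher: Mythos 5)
Your proposal is essentially the paper's own argument (which the paper only sketches): regularize $f$ by smooth positive $f_j$, solve the smooth equations via Sz\'ekelyhidi--Zhang, control the constants $c_j$ by Lemma~\ref{constant-bound} and the sup-norms by \eqref{uniform-est}, and pass to the limit using the stability estimate of Corollary~\ref{stabilit1}.

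The one step that does not work as written is the identification of the equation for an $L^1$-limit point \emph{before} you have uniform convergence: the weak convergence $c_jf_j\omega^n\to cf\omega^n$ does not by itself give $\omega_u^m\wedge\omega^{n-m}=cf\omega^n$ for a mere $L^1$-limit $u$ of the $u_j$, since the convergence theorem available in the paper (Proposition~\ref{wedge-prod}) applies only to decreasing (hence, after adding constants, uniformly convergent) sequences of potentials; continuity of the Hessian operator under $L^1$-convergence is not established and is false in general. The loop closes in the opposite order, and more cheaply: extract a subsequence that is Cauchy in $L^1(\omega^n)$ (Lemma~\ref{l1-compactness}); Corollary~\ref{stabilit1} applied to pairs $u_j,u_k$ from this subsequence makes it Cauchy in $C(X)$, hence uniformly convergent to some continuous $u$; a uniform limit of smooth $(\omega,m)$-subharmonic functions lies in $\cA_m(\omega)$ (pass to a further subsequence with $\|u_{j_k}-u\|_\infty\le 2^{-k}$ and note that $u_{j_k}+3\cdot 2^{-k}$ decreases to $u$), and since adding constants does not change $\omega_{u_{j_k}}$, Proposition~\ref{wedge-prod} identifies $\omega_u^m\wedge\omega^{n-m}$ as the weak limit $cf\omega^n$. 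For existence a single convergent subsequence suffices, so the uniqueness-of-limit-points argument you use to force convergence of the whole sequence is not needed --- and it is precisely where the unjustified identification occurred.
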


\begin{observation} \label{const-unique}
As in \cite{cuong15}, it follows from the weak comparison principle (Theorem~\ref{weak-cp}) that the constant $c>0$ is uniquely defined by $f$.
\end{observation}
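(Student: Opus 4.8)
The plan is to argue by contradiction. Suppose $(u,c_1)$ and $(v,c_2)$ are two solutions provided by Theorem~\ref{existence-lp} for the same density $f$, with $c_1\neq c_2$; after relabelling we may assume $c_1<c_2$. Both potentials lie in $\cA_m(\omega)\cap C(X)$, so the weak comparison principle (Theorem~\ref{weak-cp}) applies to $\vphi:=u$ and $\psi:=v$. With $S(\vepsilon)=\inf_X[u-(1-\vepsilon)v]$ and $U(\vepsilon,s)=\{u<(1-\vepsilon)v+S(\vepsilon)+s\}$, it yields, for $0<s<\vepsilon^3/16B$,
\[
	\int_{U(\vepsilon,s)} \omega_{(1-\vepsilon)v}^m\wedge\omega^{n-m} \le \left(1+\frac{Cs}{\vepsilon^m}\right)\int_{U(\vepsilon,s)} \omega_u^m\wedge\omega^{n-m}.
\]

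The next step is to turn this into a comparison between the constants. Expanding $\omega_{(1-\vepsilon)v}=(1-\vepsilon)\omega_v+\vepsilon\omega$ and retaining only the top-order term (the remaining terms $\omega_v^k\wedge\omega^{n-k}$, $k\le m$, are nonnegative measures by Corollary~\ref{we-pro-1}; equivalently one invokes the monotonicity \eqref{incre-s-m-form}, valid for continuous $\cA_m(\omega)$ potentials by approximation), I get $\omega_{(1-\vepsilon)v}^m\wedge\omega^{n-m}\ge(1-\vepsilon)^m\,\omega_v^m\wedge\omega^{n-m}=(1-\vepsilon)^m c_2\, f\omega^n$. Substituting this together with $\omega_u^m\wedge\omega^{n-m}=c_1 f\omega^n$ gives
\[
	(1-\vepsilon)^m c_2 \int_{U(\vepsilon,s)} f\omega^n \le \left(1+\frac{Cs}{\vepsilon^m}\right) c_1 \int_{U(\vepsilon,s)} f\omega^n.
\]

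The crux, and the step I expect to be the main obstacle, is to divide by $\int_{U(\vepsilon,s)} f\omega^n$, which forces me to show this localized mass is strictly positive. The set $U(\vepsilon,s)$ is open and contains a minimizer of $u-(1-\vepsilon)v$, hence is nonempty. If its $f$-mass vanished, then $\omega_u^m\wedge\omega^{n-m}=c_1 f\omega^n$ would be the zero measure on the nonempty open set $U(\vepsilon,s)=\{u<\tilde v\}$, where $\tilde v:=(1-\vepsilon)v+S(\vepsilon)+s\in SH_m(\omega)\cap C(X)$ (indeed $\omega_{(1-\vepsilon)v}=\vepsilon\omega+(1-\vepsilon)\omega_v\in\overline{\Gamma_m(\omega)}$). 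In the case $U(\vepsilon,s)\subsetneq X$ the maximality theorem on a compact manifold (Remark~\ref{maximality-cp}) forces $U(\vepsilon,s)=\emptyset$, a contradiction; in the case $U(\vepsilon,s)=X$ one gets $\int_X\omega_u^m\wedge\omega^{n-m}=c_1\int_X f\omega^n=0$, contradicting the standing hypothesis $\int_X f\omega^n>0$. Either way $\int_{U(\vepsilon,s)} f\omega^n>0$.

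Having secured positivity, I divide to obtain $(1-\vepsilon)^m c_2\le(1+Cs/\vepsilon^m)c_1$ for all admissible $s$. Letting $s\to0^+$ (permitted since $s$ ranges over $(0,\vepsilon^3/16B)$) gives $(1-\vepsilon)^m c_2\le c_1$, and then letting $\vepsilon\to0^+$ yields $c_2\le c_1$, contradicting $c_1<c_2$. Hence $c_1=c_2$, so the constant is uniquely determined by $f$. Everything outside the positivity argument is a routine passage to the limit; the essential input is the maximality principle of Remark~\ref{maximality-cp} combined with the nondegeneracy assumption $\int_X f\omega^n>0$.
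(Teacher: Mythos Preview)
Your proof is correct and supplies the details that the paper omits (the remark merely cites \cite{cuong15} and Theorem~\ref{weak-cp} without further argument). The overall strategy---apply the weak comparison principle, bound $\omega_{(1-\vepsilon)v}^m\wedge\omega^{n-m}$ below by $(1-\vepsilon)^m c_2 f\omega^n$ via the binomial expansion and positivity of the mixed measures, then let $s\to 0$ and $\vepsilon\to 0$---is exactly the intended one.

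One minor simplification is available for your positivity step. The same binomial expansion you use also yields the $k=0$ term $\omega_{(1-\vepsilon)v}^m\wedge\omega^{n-m}\ge\vepsilon^m\omega^n$. Hence if $\int_{U(\vepsilon,s)} f\omega^n=0$, then the right-hand side of the weak comparison inequality vanishes, forcing
\[
\vepsilon^m\int_{U(\vepsilon,s)}\omega^n \le \int_{U(\vepsilon,s)}\omega_{(1-\vepsilon)v}^m\wedge\omega^{n-m}\le 0,
\]
which is impossible since $U(\vepsilon,s)$ is a nonempty open set. This avoids the appeal to Remark~\ref{maximality-cp} (and the accompanying case distinction), though your route via maximality is also valid.
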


By adapting the method in \cite{KN2} we get the following stability statement for the Hessian equation on compact Hermitian manifolds.

\begin{prop} \label{stability2}
Suppose that $u, v \in SH_m(\omega) \cap C^\infty(X)$, $\sup_X u=\sup_X v =0$, satisfy
\[
	\omega_u^m \wedge \omega^{n-m} = f \omega^n, \quad
	\omega_v^m \wedge \omega^{n-m} = g \omega^n, \quad
\]
where $f, g\in L^p(\omega^n)$, $p>n/m$. Assume that \[ f \geq c_0>0 \] for some constant $c_0$. Fix $0< a < \frac{1}{m+1}$. Then,
\[
	\|u-v\|_\infty \leq C \|f-g\|_p^{a}
\]
where the constant $C$ depends on $c_0, a, p, \|f\|_p, \|g\|_p, \omega, X$.
\end{prop}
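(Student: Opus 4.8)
The plan is to reduce the $L^p$ stability estimate in Proposition~\ref{stability2} to the $L^1$ stability already established in Corollary~\ref{stabilit1}, exactly as in the Monge--Amp\`ere case \cite{KN2}. The idea is that an $L^1$ bound on $u-v$ combined with a quantitative control of $\|f-g\|_p$ yields an $L^\infty$ bound, and then one bootstraps. First I would interpolate: since $\sup_X u=\sup_X v=0$ and both functions lie in $SH_m(\omega)\cap C^\infty(X)$, Lemma~\ref{l1-compactness} gives a uniform $L^1$ bound on $u$ and $v$, hence a uniform $L^\infty$ bound by the a priori estimate \eqref{uniform-est} (here the hypothesis $f\geq c_0>0$ is not even needed for this part). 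So it suffices to estimate $\|u-v\|_1$ in terms of $\|f-g\|_p$ and then invoke Corollary~\ref{stabilit1}.

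\textbf{Key steps.} The core computation is to bound $\int_X |u-v|\,\omega^n$ by a power of $\|f-g\|_p$. Following \cite{KN2}, one writes this integral using the measures $\omega_u^m\wedge\omega^{n-m}=f\omega^n$ and $\omega_v^m\wedge\omega^{n-m}=g\omega^n$ together with an integration-by-parts identity for the difference. Concretely, consider
\[
	\int_X (u-v)\,(\omega_u^m\wedge\omega^{n-m} - \omega_v^m\wedge\omega^{n-m})
	= \int_X (u-v)(f-g)\,\omega^n,
\]
and expand the left side via the telescoping identity
\[
	\omega_u^m\wedge\omega^{n-m} - \omega_v^m\wedge\omega^{n-m}
	= \sum_{k=0}^{m-1} dd^c(u-v)\wedge\omega_u^k\wedge\omega_v^{m-1-k}\wedge\omega^{n-m}.
\]
After integrating by parts, each term produces $-\int_X \ii\,\d(u-v)\wedge\bar\d(u-v)\wedge\omega_u^k\wedge\omega_v^{m-1-k}\wedge\omega^{n-m}$ plus correction terms involving $d\omega$, $dd^c\omega$ controlled by $B$ (as in \eqref{dw}, \eqref{dd^cw} and \eqref{torsion-global}). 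The hypothesis $f\geq c_0>0$ enters here: on the set where, say, $u\le v$, one uses the mixed-form inequality $\omega_u^k\wedge\omega_v^{m-k}\wedge\omega^{n-m}\geq c_0^{k/m}\,\omega^n$ (or rather its gradient-weighted version, Lemma~\ref{stability-extra-2}) to control the positive "energy'' term $\int \ii\d(u-v)\wedge\bar\d(u-v)\wedge\omega^{n-1}$ from below by a genuine Dirichlet integral, which in turn dominates $\|u-v\|_1$ on a suitable sublevel set via a Poincar\'e-type inequality on $X$. The right side $\int_X(u-v)(f-g)\omega^n$ is bounded by $\|u-v\|_\infty\|f-g\|_p\,V_\omega(X)^{1/p^*}$ using H\"older, and the already-known $L^\infty$ bound on $u-v$ makes this $\le C\|f-g\|_p$. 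Assembling these gives $\|u-v\|_1\le C\|f-g\|_p$ up to a harmless exponent, and then Corollary~\ref{stabilit1} yields $\|u-v\|_\infty\le C\|f-g\|_p^{1/ap^*}$; optimizing the exponents (and absorbing $p^*$-factors) one reaches any $a<\tfrac{1}{m+1}$.

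\textbf{Main obstacle.} The delicate point is the lower bound for the mixed energy term $\sum_k\int \ii\d(u-v)\wedge\bar\d(u-v)\wedge\omega_u^k\wedge\omega_v^{m-1-k}\wedge\omega^{n-m}$ in terms of an unweighted gradient integral, and the handling of the torsion error terms: unlike the K\"ahler case, the integration by parts in \eqref{dw}--\eqref{dd^cw} generates lower-order forms with exponent of $\omega$ smaller than $n-m$, which are not positive (as emphasized after \eqref{n-m-exponent}). These must be dominated using Corollary~\ref{poly-lem} and the uniform $L^\infty$ bounds on $u,v$, at the cost of constants depending on $B$; one must check that these error terms can be absorbed by the main energy term after multiplying by a small parameter, exactly the bookkeeping carried out in \cite{KN2}. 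The positivity hypothesis $f\ge c_0$ is precisely what prevents the energy term from degenerating. Once this estimate is in place, the rest is a routine iteration identical to the Monge--Amp\`ere argument, so I would cite \cite{KN2} for the bookkeeping and only indicate the modifications forced by the Hessian exponent $n-m$ and by Lemma~\ref{stability-extra-2}.
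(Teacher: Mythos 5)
Your overall strategy coincides with the paper's: the proof given there for Proposition~\ref{stability2} is precisely ``follow \cite[Theorem~3.1]{KN2}'', with exactly the two modifications you single out, namely the bound $\left|\int_E w\, dd^c T\right|\le C\|w\|_{L^\infty(E)}(1+\|u\|_\infty)^m(1+\|v\|_\infty)^m$ for $T=\sum_{k=0}^{m-1}\omega_u^k\wedge\omega_v^{m-1-k}\wedge\omega^{n-m}$ (a consequence of Corollary~\ref{poly-lem}, used to absorb the torsion terms created by integration by parts), and Lemma~\ref{stability-extra-2} applied with $\gamma=\omega_u$ together with $f\ge c_0$ to pass from the weighted energy $\ii\,\d\vphi\wedge\bar\d\vphi\wedge\omega_u^{m-1}\wedge\omega^{n-m}$ to the unweighted Dirichlet integral. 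Two small points of precision: the inequality $\omega_u^k\wedge\omega_v^{m-k}\wedge\omega^{n-m}\ge c_0^{k/m}\omega^n$ you invoke in passing is not available (G\aa rding's mixed-form inequality would also require a positive lower bound on $g$, which is not assumed); what is actually used is only the $k=m-1$ term of $T$, where $\omega_v$ does not appear and $f\ge c_0$ suffices via Lemma~\ref{stability-extra-2}. Also, the smoothness of $u,v$ is needed exactly at the point where G\aa rding's inequality is applied, which the paper emphasizes.

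There is one step in your bookkeeping that, as written, would not deliver the stated exponent. You propose to close the argument by establishing $\|u-v\|_1\le C\|f-g\|_p$ and then composing with Corollary~\ref{stabilit1}, which gives $\|u-v\|_\infty\le C\|u-v\|_1^{1/ap^*}$ with $a=1/p^*+m(m+2)+(m+2)/\alpha$. Since $ap^*>m(m+2)>m+1$, this composition only yields $\|u-v\|_\infty\le C\|f-g\|_p^{a'}$ for $a'\le 1/(ap^*)$, which is strictly smaller than $1/(m+1)$; no optimization over $\alpha$ or $p^*$ along this route recovers the full range $a<\tfrac{1}{m+1}$ claimed in the proposition. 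In \cite{KN2} the exponent is extracted from the energy inequality and the sublevel-set analysis directly (roughly, the Dirichlet-type quantity bounded above by $C\|f-g\|_p$ dominates the $(m+1)$-st power of $\sup_X(v-u)$), not by feeding a linear $L^1$ bound into the $L^1$--$L^\infty$ stability. Apart from this, your identification of the genuine obstacles --- the non-positive lower-order forms with $\omega$-exponent below $n-m$, the torsion corrections, and the degeneration of the energy term absent $f\ge c_0$ --- matches the paper's.
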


\begin{proof} The proof follows the one in \cite[Theorem~3.1]{KN2}  with the  difference  that we need here the smoothness assumption on $u,v$  in order to use the mixed form type inequality \cite{garding59}. This inequality is likely to be true in general setting (see \cite{kolodziej05, cuong15}), but at the moment we do not have it.
In Section~\ref{S1} we have provided estimates for elementary symmetric functions which are needed to make the arguments in \cite{KN2} go through. 
We only point out where those arguments  should be modified. 

Note that now both $f$ and $u$ are smooth. Use the notation
 \[ \vphi := u-v \quad \mbox{and} \quad  T = \sum_{k=0}^{m-1} \omega_u^k \wedge \omega_v^{m-1-k} \wedge \omega^{n-m}.\]
By Corollary~\ref{poly-lem} we still have for a continuous function $w \geq 0$ on X and a Borel set $E\subset X$, that
\[
	\left| \int_E w dd^c T \right| \leq C \|w\|_{L^\infty(E)} (1+ \|u\|_\infty)^m(1+ \|v\|_\infty)^m.
\]
So the inequality \cite[eq. (3.16)]{KN2} is valid. Next,  the inequality corresponding to the one in the proof of  \cite[Lemma 3.6]{KN2}  has the following form:
\[
	\frac{\omega_u \wedge \omega^{n-1}}{\omega^n} \cdot 
	\frac{\ii\d\vphi \wedge \bar\d \vphi \wedge\omega_u^{m-1} \wedge \omega^{n-m}}{\omega^n}
\geq	\frac{\omega_u^m \wedge \omega^{n-m}}{\omega^n} \cdot 
	\frac{\theta \ii \d \vphi \wedge \bar\d \vphi \wedge \omega^{n-1}}{\omega^n},
\]
where $\omega_u \in \Gamma_m$.
 This is exactly the content of Lemma~\ref{stability-extra-2} applied for $\gamma = \omega_u$ and $\vphi$. There is  an extra constant $\theta>0$  here,
but it causes no harm as it only depends  on $n,m$. 
\end{proof}

\subsection{Approximation $(\omega,m)$-subharmonic functions} 
We are going to show the approximation property for $(\omega,m)$-subharmonic functions on $X$ for every $1< m<n$. The case $m=1$ is classical. The case $m=n$, i.e. for  quasi-plurisubharmonic functions,  is a  result due to Demailly (see \cite{blocki-kolodziej07} for a simple proof). When $\omega$ is K\"ahler the approximation property for $(\omega,m)$-subharmonic functions has been recently proven  by Lu and Nguyen \cite{chinh-dong}. They use the viscosity solutions  and ideas from \cite{berman} and \cite{egz13}. 
By a  similar approach, but without reference to viscosity solutions,  we generalise the approximation theorem in \cite{chinh-dong} to the case of general Hermitian metric $\omega$. 

The following theorem is essentially  contained in the work of Sz\'ekelyhidi \cite{szekelyhidi15}.

\begin{thm} \label{exist-smooth-hes-type}
Let $H$ be a smooth function on $X$. Then,  there exists a unique $u \in SH_m(\omega) \cap C^\infty(X)$ solving the Hessian equation 
\[
	(\omega + dd^c u)^m \wedge \omega^{n-m} = e^{u+H}\omega^n.
\] 
\end{thm}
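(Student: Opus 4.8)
The plan is to use the continuity method, the a priori estimates being essentially those of Sz\'ekelyhidi \cite{szekelyhidi15} (see also Zhang \cite{dzhang15}). Consider the family of Hessian equations
\[
	(\omega + dd^c u_t)^m \wedge \omega^{n-m} = e^{u_t + tH}\,\omega^n, \qquad t \in [0,1],
\]
with unknown $u_t \in SH_m(\omega) \cap C^\infty(X)$; for $t = 0$ the function $u_0 \equiv 0$ is a solution. Let $S \subseteq [0,1]$ be the set of those $t$ for which a smooth solution exists. Since $S \ni 0$, it suffices to show that $S$ is open and closed. Openness is standard: at a solution $u_t$ the linearization of the operator $u \mapsto \log\!\big(\omega_u^m \wedge \omega^{n-m}/\omega^n\big) - u - tH$ is a linear second-order operator which is elliptic because $\omega_{u_t}$ lies in the open cone $\Gamma_m(\omega)$ (cf.\ \cite[Lemma B]{CNS85} and \cite{garding59}) and whose zeroth-order coefficient is the constant $-1$; being strictly negative, it is an isomorphism $C^{2,\alpha}(X) \to C^{\alpha}(X)$, and the implicit function theorem applies.

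Closedness amounts to uniform a priori estimates. The $C^0$-estimate is immediate here: evaluating the equation at a maximum (resp.\ minimum) point of $u_t$, where $dd^c u_t$ is $\le 0$ (resp.\ $\ge 0$) and hence $S_m$ of the $\omega$-eigenvalues of $\omega_{u_t}$ is $\le$ (resp.\ $\ge$) that of $(1,\dots,1)$ by \eqref{incre-s-m}, yields $\|u_t\|_\infty \le \sup_X |H| =: M$. In particular the constant $-M$ is a (classical, hence $\mathcal C$-) subsolution of each equation in the family, which is exactly the ingredient needed to run Sz\'ekelyhidi's estimates. The main (and only substantive) obstacle is then the interior second-order estimate $\sup_X \Delta u_t \le C\big(1 + \sup_X |\nabla u_t|^2\big)$ together with the gradient estimate $\sup_X |\nabla u_t| \le C$; these are precisely the analytic core of \cite{szekelyhidi15, dzhang15}. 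The Laplacian bound follows from their maximum-principle computation for Hessian-type equations on Hermitian manifolds, and the gradient bound from Sz\'ekelyhidi's blow-up argument, which reduces it to the Liouville theorem for maximal $m$-subharmonic functions on $\bC^n$ of Dinew--Ko\l odziej \cite{dinew-kolodziej14}. Once $\|u_t\|_{C^2}$ is controlled the equation is uniformly elliptic with concave operator $\big(\omega_u^m \wedge \omega^{n-m}/\omega^n\big)^{1/m}$ (G\aa rding \cite{garding59}), so the complex Evans--Krylov theorem gives a uniform $C^{2,\alpha}$-bound and, after differentiating the equation, Schauder estimates give uniform $C^{k,\alpha}$-bounds for all $k$. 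Hence $S$ is closed, so $1 \in S$ and a smooth solution $u$ exists.

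For uniqueness, suppose $u, v \in SH_m(\omega) \cap C^\infty(X)$ both solve the equation and let $P$ be a point where $u - v$ attains its maximum, so that $dd^c u(P) \le dd^c v(P)$, i.e.\ $\omega_u(P) \preceq \omega_v(P)$ as Hermitian forms. The operator $G(\omega_w) := \big(\omega_w^m \wedge \omega^{n-m}/\omega^n\big)^{1/m}$ is, by G\aa rding's inequality, concave, $1$-homogeneous and nonnegative on $\overline{\Gamma_m(\omega)}$, hence monotone with respect to the order induced by positive semidefinite forms (superadditivity plus nonnegativity); therefore
\[
	e^{(u(P)+H(P))/m} = G(\omega_u)(P) \le G(\omega_v)(P) = e^{(v(P)+H(P))/m},
\]
which forces $u(P) \le v(P)$ and hence $u - v \le u(P) - v(P) \le 0$ on $X$. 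Interchanging $u$ and $v$ gives $u \equiv v$. The only genuine difficulty in this scheme is the interior second-order estimate on a general Hermitian manifold, which is the content of \cite{szekelyhidi15, dzhang15} and is not reproved here.
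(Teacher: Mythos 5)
Your proposal is correct and follows essentially the same route as the paper: $C^0$ bound by the maximum principle, the Hou--Ma--Wu/Sz\'ekelyhidi second-order estimate, the Dinew--Ko\l odziej blow-up argument for the gradient, Evans--Krylov plus bootstrapping, the continuity method through the family $u_t + tH$, and uniqueness by the maximum principle at a maximum of $u-v$. The one point you pass over that the paper makes explicit is that the cited second-order estimate is stated for a right-hand side not depending on the unknown, so one must verify (as the paper does in its Laplacian-estimate lemma) that the extra terms $u_p + H_p$ and $u_{1\bar 1} + H_{1\bar 1}$ arising when the equation $F(A)=u+H$ is differentiated do not disturb the maximum-principle computation --- a routine but necessary check.
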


\begin{proof}
The uniform estimate follows from the maximum principle. We claim that 
there exists a constant $C = C(H, \omega)$ such that
\[
	\|u\|_\infty \leq C.
\]
Indeed, suppose that $u$ attains maximum at $x \in X$. Then, $dd^c u(x) \leq 0$. Hence, at $x$,
\[
	e^{u(x) + H(x)} = (\omega + dd^c u)^m \wedge \omega^{n-m}/\omega^n \leq \omega^n/\omega^n=1.
\]
It implies that $e^{\sup_X u} \leq e^{- \inf_X H}$. 
Similarly, $e^{\inf_X u} \geq e^{- \sup_X H}$.

\begin{lem}[the Hou-Ma-Wu Laplacian estimate] \label{lap-est}  We have
\[
	\sup_{X} |\ddbar u| \leq C (1+ \sup_{X} |\nabla u|^2),
\]
where the constant $C$ depends on $\|u\|_\infty, \omega, H$.
\end{lem}

\begin{proof} We follow  the proof in \cite{szekelyhidi15} which generalised the result of Hou-Ma-Wu \cite{hou-ma-wu} to Hermitian manifolds. We only need to adjust our notation to the one  in \cite{szekelyhidi15}. Write
\[
	\omega = \ii \sum \omega_{j \bar k} dz_j \wedge d\bar z_k.
\] 
Let  $(\omega^{j \bar k})$ be the inverse matrix of $(\omega_{j \bar k})$ and consider \[ A^{ij} = \omega^{j \bar p} (\omega_{i \bar p} + u_{i \bar p}) =: \omega^{j \bar p} g_{i \bar p}.\]
Then, the equation is equivalent to 
\[
	F(A) = u + H,
\]
where 
\[
	F(A) = \log S_{m} (\lambda([A^{ij}])),
\]
with  $S_{m}$ denoting the elementary symmetric polynomial of degree $m$.
Without loss of generality we may assume that $z_0$ is the origin $0$ and the coordinates $z$ are chosen  as in \cite[Section 4]{szekelyhidi15}. 

From now on we use the notation and the computations  in \cite[Section~4]{szekelyhidi15} with $\alpha \equiv \chi \equiv \omega$. Since $\|u\|_\infty \leq C$, where $C$ is a uniform constant  and $\omega$ is a positive form, then $\underline u \equiv 0$ is the subsolution in the sense used in \cite{szekelyhidi15}.  
When the right hand side is independent of $u$  the proof is given in \cite{szekelyhidi15}. 
A small modification is required for the present case. As the equation is now
 \[
F(A) = u + H,
\] 
 the computations will change accordingly at each step. We  need to use the differentiation  at $0$ to get
\begin{align*}
	 u_p + H_p = F^{kk} g_{k \bar k p},  \\
	u_{1 \bar 1} + H_{1 \bar 1} = F^{pq, rs} g_{p\bar q 1} g_{r \bar s \bar 1} 
	+ F^{kk} g_{k \bar k 1 \bar 1}.
\end{align*}
Since $\cF = \sum F^{kk} > \tau$ and $u_{1\bar 1}$ is controlled by $\lambda_1>1$, the second equation above is enough to get the inequality $(81)$ in   \cite{szekelyhidi15}:
\[
	F^{kk} \tilde \lambda_{1,k\bar k} \geq - F^{pq, rs}g_{p \bar q 1} g_{r\bar s \bar 1}
	- 2F^{kk} Re(g_{k \bar 1 1} \overline{T^1_{k1}}) - C_0 \lambda_1 \cF. 
\]
Again, if we replace $h_p$ there by $u_p + H_p$, the inequality $(95)$ in   \cite{szekelyhidi15}  holds true:
\[
	F^{kk} u_{pk\bar k} u_{\bar p} \geq - C_0 K \cF - \epsilon_1 F^{kk} \lambda_k^2 - C_{\epsilon_1} \cF K.
\]
The rest of the proof is unchanged. So we get the lemma.
\end{proof}

Thus, we have proven the Hou-Ma-Wu type second order estimate which enables us to use the blow-up argument, due to Dinew and Ko\l odziej \cite{dinew-kolodziej12}, to get the gradient estimate 
(see also its variations by Tosatti-Weinkove \cite{TW13b} and by 
Sz\'ekelyhidi  \cite{szekelyhidi15}). 
Consequently, we also get a priori estimates for $|\d\bar\d u|$. Then, $C^{2, \alpha}$ estimates follows from the Evans-Krylov theorem, see e.g. \cite{twwy14}. By bootstrapping arguments we get $C^\infty$  estimates for the equation. 

Finally, the existence follows by the standard continuity method through the family
\[
	\log (\omega_{u_t}^m \wedge \omega^{n-m}/\omega^n) = u_t + t H
\] 
for $t \in [0,1]$. The uniqueness is a simple consequence of the maximum principle.
\end{proof}

We also need the existence and uniqueness of weak solutions of the Hessian type equation. We refer to \cite{cuong15} for more details about weak solutions to this equation in the case $m=n$.

\begin{thm} \label{stability-hes-type} Let $0 \leq f \in L^{p}(\omega^n)$, $p>n/m$ be such that $\int_X f \omega^n >0$. Assume that 
$\{f_j\}_{j\geq 1}$ are smooth and positive functions on $X$ converging in $L^p(\omega^n)$ to $f$ as $j \to +\infty$. Assume that
$u_j\in SH_m(\omega)\cap C^\infty(X)$ solves
\begin{equation} \label{ehtlp-eq1}
	\omega_{u_j}^m \wedge \omega^{n-m} = e^{u_j} f_j\omega^n.
\end{equation}
Then, $u_j$ converges uniformly to $u\in \cA_m(\omega)\cap C(X)$ as $j \to +\infty$, which is the unique solution in $\cA_m(\omega)\cap C(X)$ of 
\begin{equation} \label{ehtlp-eq2}
	\omega_u^m \wedge \omega^{n-m} = e^u f \omega^n .
\end{equation}
\end{thm}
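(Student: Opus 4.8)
The plan is to mimic, in the Hessian setting, the proof of the corresponding Monge-Amp\`ere statement in \cite{cuong15}: a uniform a priori bound $\sup_j\|u_j\|_\infty<\infty$, extraction of an $L^1$-limit, an upgrade to uniform convergence via the stability estimate, and finally passage to the limit in the equation together with a uniqueness argument.

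\textbf{Uniform estimate.} First I would bound $\inf_X u_j$ from below. Writing \eqref{ehtlp-eq1} as $\omega_{u_j}^m\wedge\omega^{n-m}=F_j\,\omega^n$ with $F_j=e^{u_j}f_j$, on the relevant sublevel sets $U(\tfrac12,s)=\{u_j<\inf_X u_j+s\}$ (with $s$ of order $\vepsilon_B$) one has $F_j\le e^{2\vepsilon_B}e^{\inf_X u_j}f_j$; since in the proof of Lemma~\ref{vol-sub-level-property} the data is integrated only over such sublevel sets, the factor $\|F_j\|_p$ there may be replaced by $e^{2\vepsilon_B}e^{\inf_X u_j}\|f_j\|_p$. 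Running Theorem~\ref{a-priori-estimate} with this improvement, taking $s$ of order $\vepsilon_B$ and using $V_\omega(U(\tfrac12,s))\le V_\omega(X)$, yields $\vepsilon_B\le C(e^{\inf_X u_j}\|f_j\|_p)^{1/m}$, and since $\|f_j\|_p$ is bounded this forces $\inf_X u_j\ge-C_0$. To bound $\sup_X u_j$ from above I would use Corollary~\ref{decay-vol-sub-sets} (applied to $u_j-\sup_X u_j$) together with the uniform integrability of $\{f_j\}$ and $\int_X f_j\,\omega^n\to\int_X f\,\omega^n>0$ to fix a level $t_\ast$ with $\int_{\{u_j>\sup_X u_j-t_\ast\}}f_j\,\omega^n\ge c>0$; combined with the CLN bound $\int_X\omega_{u_j}^m\wedge\omega^{n-m}\le C(1+\operatorname{osc}_X u_j)^m$ this gives $\sup_X u_j\le C+m\log(1+\operatorname{osc}_X u_j)$, and since $\operatorname{osc}_X u_j\le\sup_X u_j+C_0$ the oscillation, hence $\|u_j\|_\infty$, is bounded uniformly in $j$.

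\textbf{Convergence.} By Lemma~\ref{l1-compactness} the functions $u_j-\sup_X u_j$ have an $L^1(\omega^n)$-convergent subsequence, and since $\{\sup_X u_j\}$ is bounded we may assume $u_j\to u$ in $L^1(\omega^n)$ along a subsequence. Applying the stability estimate (Corollary~\ref{stabilit1}) to the normalized functions $u_j-\sup_X u_j$, whose Hessian densities $e^{u_j}f_j$ are uniformly bounded in $L^p$ by the previous step, converts $L^1$-Cauchyness into $L^\infty$-Cauchyness; hence $u_j-\sup_X u_j$, and then $u_j$ itself, converges uniformly, so that $u\in\cA_m(\omega)\cap C(X)$. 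Then $\omega_{u_j}^m\wedge\omega^{n-m}\to\omega_u^m\wedge\omega^{n-m}$ weakly by the continuous-potential version of the Bedford-Taylor convergence theorem (the analogue of Proposition~\ref{wedge-prod}, which holds by the same arguments using Corollary~\ref{poly-lem} and the CLN inequality), while $e^{u_j}f_j\,\omega^n\to e^u f\,\omega^n$ in $L^1$ because $e^{u_j}\to e^u$ uniformly and $f_j\to f$ in $L^p\subset L^1$. Passing to the limit in \eqref{ehtlp-eq1} shows that $u$ solves \eqref{ehtlp-eq2}.

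\textbf{Uniqueness, and the main difficulty.} Uniqueness in $\cA_m(\omega)\cap C(X)$ follows from the weak comparison principle (Theorem~\ref{weak-cp}) as in \cite{cuong15}, exploiting that $e^u f$ is non-decreasing in $u$: if $u,u'$ were solutions with $\delta:=\sup_X(u-u')>0$, then on the small sublevel sets attached to the pair $(u',u)$ one has $u-u'\ge\delta/2$, hence $\omega_u^m\wedge\omega^{n-m}\ge e^{\delta/2}\,\omega_{u'}^m\wedge\omega^{n-m}$ there; inserting this into Theorem~\ref{weak-cp} with $s$ of order $\vepsilon^{m+2}$ and letting $\vepsilon\to0$ would give $1\ge e^{\delta/2}$, a contradiction (the degenerate case in which the relevant Hessian mass vanishes being disposed of by the maximality theorem, Remark~\ref{maximality-cp}). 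Since the limit is thereby characterised uniquely, the whole sequence $u_j$ converges, not just a subsequence. I expect the main obstacle to be the uniform estimate, and within it the lower bound on $\inf_X u_j$: one must genuinely use the exponential nonlinearity — replacing $e^{u_j}$ by $e^{\inf_X u_j}$ on the sublevel sets near the minimum — before the a priori estimates of Section~2 can be applied, whereas the rest consists of routine adaptations of the Hermitian pluripotential theory developed above.
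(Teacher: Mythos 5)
Your proposal is correct and follows essentially the same route as the paper: a uniform $L^\infty$ bound on the $u_j$, $L^1$-compactness of the normalized sequence, the stability estimate (Corollary~\ref{stabilit1}) to upgrade $L^1$-convergence to uniform convergence, passage to the limit in the equation, and uniqueness via the weak comparison principle together with maximality. The only difference is one of presentation: the paper outsources the uniform bound and the uniqueness to \cite[Claim~2.6, Lemma~2.3]{cuong15}, whereas you reconstruct both arguments explicitly (and correctly), including the key point that the exponential nonlinearity must be exploited on the sublevel sets before the a priori estimates of Section~2 apply.
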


\begin{proof} Set $M_j := \sup_X u_j$. Using the argument \cite[Claim~2.6]{cuong15} we get that $M_j$ are uniformly bounded. Set 
$
	\tilde u_j := u_j - M_j.
$ The equation \eqref{ehtlp-eq1} reads
\[
	\omega_{\tilde u_j}^m \wedge \omega^{n-m} = e^{\tilde u_j + M_j} f_j \omega^n.
\]
Then, $\{\tilde u_j\}_{j\geq 1}$ is relatively compact in $L^1(\omega^n)$ (Lemma~\ref{l1-compactness}). Passing to a subsequence, still writing $\tilde u_j$, we obtain a Cauchy sequence in $L^1(\omega^n)$. By Corollary~\ref{stabilit1} it follows that $\{\tilde u_j\}_{j\geq}$ is a Cauchy sequence in $C(X)$. Therefore, it converges uniformly to a solution $\tilde u \in \cA_m(\omega)$ of $\omega_{\tilde u}^m \wedge \omega^{n-m} = e^{\tilde u+M} f \omega$, where  $M= \lim_j M$. Rewriting $u = \tilde u+M$ we get that $u_j$ converges uniformly to $u$ which satisfies $\omega_u^m \wedge \omega^{n-m} = e^u f \omega^n$. 

By the weak comparison principle (Theorem~\ref{weak-cp}) the equation \eqref{ehtlp-eq2} has at most one solution in $\cA_m(\omega)\cap C(X)$ (see e.g. \cite[Lemma~2.3]{cuong15}). Thanks to  this, we conclude that the sequence $u_j$ converges uniformly to the unique solution $u$ because every convergent subsequence in $L^1(\omega^n)$ does.
\end{proof} 

We are ready to prove the main result of this subsection.

\begin{lem}[approximation property]  \label{appro-global}
For any $u\in SH_m(X, \omega)$ there exists a decreasing sequence of smooth $(\omega,m)$-subharmonic functions on $X$ converging to $u$ point-wise. In particular $SH_m(X, \omega) \equiv \cA_m(X, \omega)$.
\end{lem}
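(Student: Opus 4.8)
The strategy is a local-to-global argument combined with the Hessian type equation solved in Theorem~\ref{exist-smooth-hes-type} and its stability counterpart (Theorem~\ref{stability-hes-type}), mimicking the K\"ahler construction of Lu--Nguyen \cite{chinh-dong} but avoiding viscosity solutions. First I would reduce to the case where $u$ is bounded: replacing $u$ by $\max\{u,-k\}$ for $k\to\infty$ gives a decreasing sequence in $SH_m(\omega)$ (Proposition~\ref{basic-facts}), so a diagonal argument lets us assume $u$ is bounded, indeed (after adding a constant) that $-1\le u\le 0$. Next, using the local approximation property available from $(\omega,1)$-type regularisation together with a partition of unity, I would produce a sequence $v_j\in C^\infty(X)$ with $v_j\to u$ in $L^1(\omega^n)$ and with $\omega+dd^c v_j\ge -\varepsilon_j\omega$ for some $\varepsilon_j\downarrow 0$ (this is the standard Richberg--Demailly type gluing; the $(\omega,m)$-positivity is not yet available, only near-positivity). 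These $v_j$ are the ``rough'' approximants to be corrected.

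The heart of the argument is to replace each $v_j$ by a genuine smooth $(\omega,m)$-subharmonic function without losing the convergence. For this I would take a smooth positive $f_{j}$ built from $v_j$ — concretely, one wants $\omega_{v_j}^m\wedge\omega^{n-m}$ to be comparable to $f_j\omega^n$ for a positive smooth $f_j$; since $\omega_{v_j}$ is only almost positive this needs a small perturbation, e.g. one solves the auxiliary Hessian type equation
\[
	\omega_{w_j}^m\wedge\omega^{n-m}=e^{w_j}f_j\,\omega^n
\]
with $f_j$ chosen (using Theorem~\ref{exist-smooth-hes-type}) so that $w_j$ is forced to lie above $v_j-C/j$ by the comparison/maximum principle, while $w_j\to u$ in $L^1$ by the stability statement in Theorem~\ref{stability-hes-type}. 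The output $w_j\in SH_m(\omega)\cap C^\infty(X)$ converges uniformly (again by Corollary~\ref{stabilit1}/Theorem~\ref{stability-hes-type}) to a continuous $(\omega,m)$-subharmonic function, and on the subset where $u$ is continuous one identifies the limit with $u$. Finally, from a sequence of smooth $(\omega,m)$-subharmonic $w_j$ converging to $u$ merely in $L^1$ (or uniformly on the continuity locus), one manufactures a \emph{decreasing} approximating sequence by the usual trick: set $\tilde u_N:=(\sup_{j\ge N} w_j)^{*}$, which is $(\omega,m)$-subharmonic and decreases to $u$ by upper semicontinuity of $u$ and a Hartogs-type lemma; then regularise each $\tilde u_N$ from above. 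Composing with the reduction to bounded $u$ by a diagonal extraction yields the decreasing smooth approximants for arbitrary $u\in SH_m(X,\omega)$.

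\textbf{Main obstacle.} The delicate point is the correction step: starting from $v_j$ that is only $\omega_{v_j}\ge-\varepsilon_j\omega$, one must produce a genuine element of $\Gamma_m(\omega)$ that is still $L^1$-close to $u$, and the only available existence mechanism is the Hessian type equation $\omega_w^m\wedge\omega^{n-m}=e^{w+H}\omega^n$ of Theorem~\ref{exist-smooth-hes-type}. Choosing $H=H_j$ (equivalently the positive data $f_j$) so that the solution $w_j$ is squeezed between $u-\text{o}(1)$ and $u+\text{o}(1)$ requires careful use of the maximum principle together with the uniform and stability estimates, and is where the argument genuinely needs the Hermitian versions of the comparison principle and the a priori estimates established in Section~\ref{S1} and the first two subsections of Section~2. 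Once the $w_j$ are in hand, producing the \emph{decreasing} sequence is soft (sup-regularisation plus a Hartogs lemma), and the reduction to bounded potentials is routine.
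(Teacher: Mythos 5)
Your overall framework (reduce to bounded $u$, use the Hessian--type equation of Theorem~\ref{exist-smooth-hes-type} together with the stability statements to manufacture smooth $(\omega,m)$-subharmonic approximants) is in the right spirit, but the proposal has two genuine gaps. The first is the ``rough approximant'' step: you assume you can produce smooth $v_j\to u$ in $L^1$ with $\omega+dd^c v_j\ge -\varepsilon_j\omega$ by Richberg--Demailly gluing. As literally stated this is impossible for a general $u\in SH_m(\omega)$ with $m<n$: passing to the limit would force $\omega+dd^cu\ge 0$ as a current, i.e.\ $u$ would be $\omega$-plurisubharmonic, which a general $(\omega,m)$-subharmonic function is not. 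Even the corrected version (smooth approximants whose Hessians are $\varepsilon_j$-close to the cone $\overline{\Gamma_m(\omega)}$) is exactly the tool that is \emph{not} available here: for $1<m<n$ and non-constant $\omega$ the cone condition is not preserved by convolution, and the unavailability of such local regularization is the stated reason the whole PDE approach is needed. The paper avoids this entirely: it only uses a decreasing sequence of \emph{smooth majorants} $h\ge u$ (which exist because $u$ is merely upper semicontinuous --- no positivity of $h$ is required), and approximates the envelope $\tilde h=\bigl(\sup\{v\in SH_m(\omega)\cap L^\infty : v\le h\}\bigr)^*$ rather than $u$ directly. The correction step is then completely explicit: writing $\omega_h^m\wedge\omega^{n-m}=F\omega^n$ and $F_*=\max\{F,0\}$, one solves the penalized equations $\omega_{w_\varepsilon}^m\wedge\omega^{n-m}=e^{(w_\varepsilon-h)/\varepsilon}(F_*+\varepsilon)\omega^n$, gets $w_\varepsilon\le h$ and monotonicity in $\varepsilon$ from the maximum principle, passes to a uniform limit $w$ by Corollary~\ref{stabilit1}, and identifies $w=\tilde h$ using the maximality theorem (Theorem~\ref{maximality}, Remark~\ref{maximality-cp}) applied on $\{w<v\}\subset\{w<h\}$ where $\omega_w^m\wedge\omega^{n-m}=0$. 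Your description of ``choosing $f_j$ so that $w_j$ is forced to lie above $v_j-C/j$'' does not specify this mechanism, and the maximality theorem --- the key ingredient for identifying the limit --- does not appear in your argument at all.

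The second gap is the final step. From smooth $w_j$ converging to $u$ only in $L^1$ you form $\tilde u_N=(\sup_{j\ge N}w_j)^*$ and then propose to ``regularise each $\tilde u_N$ from above'': but $\tilde u_N$ is merely an $(\omega,m)$-subharmonic function, and regularising it from above by smooth $(\omega,m)$-subharmonic functions is precisely the statement being proved, so this is circular. In the paper's construction this issue does not arise because the convergence $w_\varepsilon\to\tilde h$ is \emph{uniform} (via the $L^\infty$--$L^1$ stability of Corollary~\ref{stabilit1}), and a uniform limit of smooth $(\omega,m)$-subharmonic functions can be turned into a decreasing one simply by adding suitable constants; the passage to general $u$ is then a diagonal extraction over the majorants $h\searrow u$.
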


\begin{proof}
The general scheme is borrowed from Berman \cite{berman}, Eyssidieux-Guedj-Zeriahi \cite{egz13} (used also in \cite{chinh-dong}). However, to make the argument work we have to employ results which allow to extend the proof from the K\"ahler context to the  Hermitian one. 

Take $u$  an $(\omega,m)$-sh function. As $\max\{u, -j\} \in SH_m(\omega)$ for any $j\geq 1$, without loss of generality we may assume that $u$ is bounded. Suppose that $u \leq h \in C^\infty(X)$, where the function $h$ may not belong to $SH_m(\omega)$. Consider the largest $(\omega, m)$-sh function $\tilde h$ which is smaller or equal  than $h$.  The function $\tilde h$ can be obtained by taking upper semicontinuous regularization of \[\sup\{v \in SH_m(\omega) \cap L^\infty(X): v \leq h\}.\] Then, it is clear that $\tilde h$ is a $(\omega, m)$-sh and $u \leq \tilde h \leq h$. We are going to show that $\tilde h$ can be approximated by a decreasing sequence of smooth $(\omega, m)$-subharmonic functions, i.e. $\tilde h\in \cA_m(\omega)$. Once this is done, we also obtain $u\in \cA_m(\omega)$ by leting $h \searrow u$ and choosing an appropriate sequence of approximants of $\tilde h \searrow u$. 

Since $h\in C^\infty(X)$, we can write $\omega_h^m \wedge \omega^{n-m} = F \omega^n$ with $F$ being a  smooth function on $X$. We take the non-negative part $F_* = \max\{F, 0\}$, and then  a smooth approximation of it to obtain non-negative and smooth function $\tilde F \geq F_*$.  Using the existence of a  smooth $(\omega,m)$-solution to the complex Hessian type equation (Theorem~\ref{exist-smooth-hes-type}), we get for $0< \vepsilon \leq 1$,
\[
	\omega_{\tilde w_\vepsilon}^m \wedge \omega^{n-m} = e^{\frac{1}{\vepsilon}(\tilde w_\vepsilon - h)} [\tilde F + \vepsilon] \omega^n, 
\]
where $\tilde w_\vepsilon \in SH_m(\omega) \cap C^\infty(X)$. 

It is easy to see, by maximum principle, that $\tilde w_\vepsilon \leq h$ and $\tilde w_\vepsilon$ is decreasing in $\vepsilon$. That means $\tilde w_\vepsilon \nearrow$ as $\vepsilon \searrow 0$ and is bounded from above by $h$. Taking limits on both sides as $\tilde F \to F_*$ uniformly, by Theorem~\ref{stability-hes-type}  we get (for any fixed $\vepsilon$)
that $\tilde w_\vepsilon \to w_\vepsilon \in \cA_m(\omega) \cap C(X)$ uniformly and $w_\vepsilon $ is also increasing as $\vepsilon \searrow 0$. Moreover, at the limit we have
\[
	\omega_{w_\vepsilon}^m \wedge \omega^{n-m} = e^{\frac{1}{\vepsilon}(w_\vepsilon - h)} (F_* + \vepsilon) \omega^n. 
\]
Since $w_\vepsilon \leq h$, the right hand side is uniformly bounded in $L^\infty(X)$. The monotone sequence of continuous $(\omega,m)$-subharmonic functions $\{w_\vepsilon\}_{\vepsilon>0}$ is bounded by $h$, therefore it is Cauchy in $L^1(X)$. Let  $\vepsilon \searrow 0$, it follows from Corollary~\ref{stabilit1} that $w_\vepsilon \nearrow w \in \cA_m(\omega) \cap C(X)$ uniformly and $w$  satisfies 
\[
	\omega_w^m\wedge \omega^{n-m} 
	\leq {\bf 1}_{\{w = h\}}  F_* \; \omega^n.
\] 
Now we claim that $w = \tilde h$. Indeed, as $w_\vepsilon \leq \tilde h$, it follows that $w \leq \tilde h$. It remains to show that $w \geq \tilde h$ on $\{w < h\}$. Take $v \in SH_m(\omega) \cap L^\infty(X)$ and $v \leq h$. First, we observe that $\omega_w^m \wedge \omega^{n-m} = 0$ on $\{w < v\} \subset \{w < h\}.$ If $\{w< v\}$ were non-empty then by the maximality of $w$  on this set would  give a contradiction (see Theorem~\ref{maximality}, Remark~\ref{maximality-cp}).
\end{proof}

\bigskip

{\noindent Faculty of Mathematics and Computer Science,
Jagiellonian University 30-348 Krak\'ow, \L ojasiewicza 6,
Poland;\\ e-mail: {\tt Slawomir.Kolodziej@im.uj.edu.pl}}\\ \\

{\noindent Faculty of Mathematics and Computer Science,
Jagiellonian University 30-348 Krak\'ow, \L ojasiewicza 6,
Poland;\\ e-mail: {\tt Nguyen.Ngoc.Cuong@im.uj.edu.pl} 

\end{document}

\section{Appendix} The Cauchy-Schwarz inequality for positive currents is a very useful one, e.g. \cite{kolodziej05}, \cite{TW10b}, \cite{cuong15}. For Hessian operators there is such a type of inequality and in fact it is a key instrument for proving the uniform estimate of the Hessian equation in \cite{dzhang15} and \cite{sun14} on general compact Hermitian manifolds. It may have some other application in the study of the Hessian equation, so we provided it here. 

In what follows we use again the notations and convention in Sections~\ref{s11}, ~\ref{s12}.

\begin{lem}  \label{abs-lam-est}
Fix $3 \leq m \leq n$. Let $\{i_1, ...,i_{m-2}\} \subseteq \{1, ..., n\}$ and $r \notin \{i_1, ..., i_{m-2}\}$. Then, for $\lambda \in \Gamma_m$,
\[
	|\lambda_{i_1} \cdots \lambda_{i_{m-2}}| \leq C_{n,k} S_{m-2;r}(\lambda).
\]
The inequality also holds true if we replace the number $m-2$ by any integer number $1 \leq k \leq m-2$.
\end{lem}

\begin{proof} 
We follow closely the proof in \cite[Lemma~2.2]{dzhang15}. 
We consider two cases, the first case does not need the assumption $r \notin \{i_1, ..., i_{m-2}\}$. 

{\bf Case 1:} $r \geq m-1$. 
It follows from \cite[Theorem 1]{lin-trudinger94} that there exists $\theta = \theta(n,k)>0$,
\[
	S_{m-2;r}(\lambda) \geq \theta S_{m-2}(\lambda).
\]
Then, the inequality \eqref{lower-sk} gives for $\lambda \in \Gamma_m$,
\[
	S_{m-2;r}(\lambda) \geq \theta \lambda_1 \cdots \lambda_{m-2}.
\]
Therefore, if $\lambda_{i_t}>0$ for all $t=1,..,m-2$, then we have done by the arrangement \eqref{order-lam}. Otherwise, without loss of generality, we can order $\lambda$,
\[
	\lambda_{i_1} \geq \cdots \geq \lambda_{i_s} >0 > 
	\lambda_{i_{s+1}} \cdots \geq \lambda_{i_{m-2}},
\]
and write 
\[
	A = \lambda_{i_1} \cdots \lambda_{i_{m-2}}.
\]
Consequently, 
\[
	|A| = (\lambda_{i_1} \cdots \lambda_{i_s}) |\lambda_{i_{s+1}} \cdots \lambda_{i_{m-2}}|.
\]
By \eqref{hom-ineq} we have sum of any $n-m+1$ of $\lambda_i$ is positive and hence
\[
	|\lambda_{i_{m-2}}| \leq (p - m +1) \lambda_{m}
\]
Hence, it is clear that 
\begin{align*}
	|A| 
&	\leq C_{n,m} \lambda_{i_1} \cdots \lambda_{i_s} (\lambda_m)^{m-s-2} \\
&	\leq 	\frac{C_{n,m}}{\theta} S_{m-2;r} (\lambda).
\end{align*}
We finished the proof of Case 1.

{\bf Case 2:}  $r \leq m-2$ and $r \notin \{i_1, ...,i_{m-2}\}$. 
By the inequality \eqref{lower-sk} and \eqref{lower-sk-lam}, we have
\[
	\lambda_r S_{m-2;r}  \geq \theta  S_{m-1}(\lambda) \geq \theta \lambda_1 \cdots \lambda_{m-1}.
\]
It follows that for $r=1$,
\[
	S_{m-2;1}(\lambda) \geq \theta \lambda_2 \cdots \lambda_{m-1}
\]
and for $2\leq r \leq m-1$,
\[
	S_{m-2;r}(\lambda) \geq \theta \lambda_1 \cdots \lambda_{r-1} \lambda_{r+1} \lambda_{m-1}.
\]
Provided the above lower estimate, similar to the previous case, we only need to estimate
\[
	|A| =  (\lambda_{i_1} \cdots \lambda_{i_s}) |\lambda_{i_{s+1}} \cdots \lambda_{i_{m-2}}|.
\]
Since $r \notin \{i_1, ...,i_{m-2}\}$ the verification is as in Case 1.
\end{proof}

The Cauchy-Schwarz inequality type is a consequence of Lemma~\ref{abs-lam-est} in the language of differential forms  gives. This was first proved by Zhang \cite{dzhang15}.

\begin{lem} \label{ineq-m-2} Let $u,v$ be  smooth functions. Let $T$ be a smooth $(n-m+1,n-m+1)$-form. Then, for $\gamma \in \Gamma_m(\omega)$ 
\[
	|\d u \wedge \bar\d v \wedge \gamma^{m-2} \wedge T| 
	\leq C_{n,m,\|T\|} \left[ \ii \d u \wedge \bar\d u + \ii \d v \wedge \bar\d v\right]\wedge \gamma^{m-2} \wedge\omega^{n-m+1}.
\]
A similar result holds by replacing numbers $m-2$ and $n-m+1$ by any pair $k$ and $n-k-1$ that $k \leq m-2$.
\end{lem}

\begin{proof} The arguments are very similar to the one in Lemma~\ref{form-s-m-1}. 
Fix a point $P \in \Omega$. Choose a local coordinate at $P$ such that
\[
	\omega = \sum_{j =1}^n dz_j \wedge d\bar z_j \quad \mbox{and}\quad
	\gamma= \sum_{j =1}^n \lambda_j dz_j \wedge d\bar z_j.
\]
In this local coordinates, we write 
\begin{align*}
\d u = \sum_{j=1}^n u_j dz_j, \quad \bar\d v = \sum_{k=1}^n v_{\bar k} d\bar z_k, \\
T =	\sum_{|J|=|K| = n-m+1} T_{JK}  dz_J \wedge d \bar z_K. 
\end{align*}
In what follows, the computation is performed at $P$. We first have
\[
	\gamma^{m-2} 
	= \sum_{|I| = m-2, I \subseteq \{1,..,n\}} 
	\prod_{i_s \in I} \lambda_{i_s} dz_I \wedge d\bar z_I.
\]
It implies that 
\[
	\d u \wedge \bar\d v \wedge \gamma^{m-2} 
	= \sum_{1 \leq j,k \leq n, j,k \notin I} u_j v_{\bar k}\prod_{i_s \in I} \lambda_{i_s} dz_I \wedge d\bar z_I \wedge dz_j \wedge d\bar z_k.
\]
Therefore, we are only interested in sets $J, K \subseteq \{1,...,n\}$ such that
\[
	I \cup \{j\} \cup J = I \cup \{k\} \cup K = \{1,...,n\}.
\]
For such sets $J, K$ we have
\[
	(\ii)^{(n-m)^2}\d u \wedge \bar\d v \wedge \gamma^{m-2} \wedge dz_J \wedge d\bar z_K/\omega^n=  \sum_{j,k \notin I, |I| = m-2} u_j v_{\bar k}\prod_{i_s \in I} \lambda_{i_s}.
\]
Its modulus is bound by
\[
	\frac{1}{2} \sum_{j,k \notin I, |I| = m-2} (|u_j|^2 + |v_{k}|^2)\prod_{i_s \in I} |\lambda_{i_s} |.
\]
At this point, we use Lemma~\ref{abs-lam-est} to bound the products in the sum. It follows that the last sum is bound by (up to a constant $C_{n,m}$)
\begin{align*}
	 \sum_{k =1}^n  (|u_{k}|^2 + |v_k|^2) S_{m-2;k}(\lambda)
&=	  \sum_{k =1}^n  (|u_{k}|^2 + |v_k|^2)\left( \sum_{|I| = m-2, k \notin I} \prod_{i_s \in I} \lambda_{i_s}\right) \\
&	 = \sum_{k \notin I, |I| = m-2} (|u_{k}|^2+ |v_k|^2)\prod_{i_s \in I}  \lambda_{i_s}. 
\end{align*}
The last quantity is nothing but the right hand side modulo a constant
\begin{align*}
	n \binom{n}{m-2}\ii [\d u \wedge \bar\d u +\d v \wedge \bar\d v] \wedge \gamma^{m-2} \wedge \omega^{n-m+1}/\omega^{n}  \\
=	 \sum_{k \notin I, |I| = m-2} (|u_{k}|^2+|v_k|^2)\prod_{i_s \in I}  \lambda_{i_s}. 
\end{align*}
Thus, taking into accounts the coefficients $T_{JK}$ we get that
\[
 |\d u \wedge \bar\d v \wedge \gamma^{m-2} \wedge T|
 \] is bound by the right hand side 
 \[
 	\ii [\d u \wedge \bar\d u +\d v \wedge \bar\d v] \wedge \gamma^{m-2} \wedge \omega^{n-m+1}
 \]
 modulo a uniform constant $C_{n,m,\|T\|} = C_{n,m} \sup_{J,K}\|T_{JK}\|_\infty$.
\end{proof}

Let $P(x,y) = a_0(y) + a_1(y) x + ....+ a_{r-1}(y) x^{r-1} + a_r(y) x^r$ be a polynomial in $x \in \bR$ of degree $r\geq 1$, where $a_k(y)$, $k=1,..,r$, are complex valued functions of $y\in Y$, where $Y$ is a general set. If for $y\in Y$,
\[ |P(x,y)| < C_y<+\infty \quad \forall x \in [0,1]\]
for some constant $C_y$ (depending on $y$), then
\[
	|a_k(y)| \leq C \sup_{x \in [0,1]} |P(x,y)|,
\]
where $C$ is  a universal constant.
\end{lem}

\begin{proof}
Take $r+1$ values $0 \leq x_0< x_1< ...<x_r \leq 1$ in $[0,1]$ (e.g. $x_i = i/r$). Then, we have for $0 \leq i \leq r$,
\[
	P(x_i,y) = a_0 + a_1 x_i + \cdots + a_{r-1} x_i^{r-1} + a_{r} x_i^r.
\]
By the choice of $x_i$, the Vandermonde matrix $V:= [x_i^j]_{i,j}$ where $i, j=0,...,r$ is non-singular. Therefore, we have
\[
\begin{pmatrix}
a_0 (y)\\
a_1 (y)\\
\cdots \\
a_r (y)
\end{pmatrix}
 = V^{-1}
\begin{pmatrix}
P(x_0,y) \\
P(x_1,y) \\
\cdots \\
P(x_r,y) \\	
\end{pmatrix}.
\]
As the vector $(P(x_0,y), ...,P(x_r,y))$ is bounded by the assumption, it follows that  \[|a_i(y)| = |\sum_{j=1}^rV^{ij} P(x_j,y)|, \quad V^{-1} = [V^{ij}],\] is bounded by $C \sup_{y \in [0,1]} |P(x,y)|$, where $C$ is a universal constant depending on the choice of $(x_1,...,x_r)$ in $[0,1]$.
\end{proof}

The case of wedge product of two forms  is enough for most application and it is simple, so we give a complete proof. 

\begin{prop} \label{ineq-wed-two}
Given two non-negative integer number $k,l$ such that $k+l \leq m-1$. Let $T$ be a smooth $(n-k-l,n-k-l)$-form.
For $\gamma, \eta \in \Gamma_m(\omega)$,
\[
	|\eta^k \wedge \gamma^l \wedge T| \leq C_{n,m, \|T\|} (\gamma+ \eta)^{k+l} \wedge \omega^{n-k-l},
\]
where $C_{n,k,\|T\|}$ is a uniform constant depending only on $n,k$ and the uniform norm of coefficients of $T$. 
\end{prop}

\begin{proof}
Let $r:= k+l \leq m-1$. Consider the polynomial
\[
	P(x,y) = (\eta + x \gamma)^r\wedge T/\omega^{n} = a_0 + a_1(y) x + \cdots a_r(y) x^r,
\]
where $y = (\eta, \gamma) \in \Gamma_m(\omega) \times \Gamma_m(\omega)$ and
\[
	a_i(y) := a_i(\eta, \gamma) 
	= \binom{r}{i} \eta^{r-i} \wedge \gamma^{i} \wedge T/\omega^n.
\]
As $r \leq m-1$, Lemma~\ref{form-s-m-1} gives us  that for $0\leq x \leq 1$,
\begin{align*}
	|P(x,y)| 
&	\leq C_{n,r,\|T\|} (\eta + x\gamma)^r \wedge \omega^{n-r}/\omega^n \\
&	\leq C_{n,r,\|T\|} (\eta + \gamma)^r \wedge \omega^{n-r}/\omega^n,
\end{align*}
where we used \eqref{incre-s-m-form} for the second inequality. 
Therefore, the assumption of Lemma~\ref{poly-lem} is fulfilled for $P(x,y)$ and 
\[
	\sup_{x\in [0,1]} |P(x,y)| \leq  C_{n,r,\|T\|} (\eta + \gamma)^r \wedge \omega^{n-r}/\omega^n.
\]
Thus, we get that
\[
	|a_i(y)| 
	= \left | \binom{r}{i} \eta^{r-i} \wedge \gamma^{i} \wedge T/\omega^n \right| \leq  C_{n,r,\|T\|} (\eta + \gamma)^r \wedge \omega^{n-r}/\omega^n.
\]
This is equivalent to the desired inequality.
\end{proof}